\newtheorem{theorem}{Theorem}[section]
\newtheorem{lemma}[theorem]{Lemma}
\newtheorem{proposition}[theorem]{Proposition}
\newtheorem{corollary}[theorem]{Corollary} 
\theoremstyle{definition}  
\newtheorem{definition}[theorem]{Definition}
\newtheorem{example}[theorem]{Example}
\newtheorem{conjecture}[theorem]{Conjecture}  
\newtheorem{question}[theorem]{Question}
\newtheorem{remark}[theorem]{Remark}
\newcommand{\id}{\text{id}}
\renewcommand{\Vec}{\operatorname{\operatorname{\mathsf{Vec}}}}
\DeclareMathOperator{\Pic}{\operatorname{\mathsf{Pic}}}
\DeclareMathOperator{\BrPic}{\operatorname{\mathsf{BrPic}}}
\DeclareMathOperator{\Aut}{\operatorname{\mathsf{Aut}}}
\DeclareMathOperator{\TAut}{\operatorname{\mathsf{TAut}}}
\DeclareMathOperator{\Out}{\operatorname{\mathsf{Out}}}
\DeclareMathOperator{\Inv}{\operatorname{\mathsf{Inv}}}
\DeclareMathOperator{\Rep}{\operatorname{\mathsf{Rep}}}
\DeclareMathOperator{\comod}{\operatorname{\mathsf{comod}}}
\DeclareMathOperator{\modd}{\operatorname{\mathsf{mod}}}
\DeclareMathOperator{\Ext}{\operatorname{\mathsf{Ext}}}
\DeclareMathOperator{\Lie}{\operatorname{\mathsf{Lie}}}
\DeclareMathOperator{\Hom}{\operatorname{\mathsf{Hom}}}
\newcommand{\rev}{\text{rev}}
\newcommand{\C}{\mathcal{C}}
\newcommand{\D}{\mathcal{D}}
\newcommand{\E}{\mathcal{E}}
\newcommand{\Z}{\mathcal{Z}}
\newcommand{\M}{\mathcal{M}}
\newcommand{\N}{\mathcal{N}}
\renewcommand{\O}{\mathcal{O}}
\newcommand{\be}{\mathbf{1}}
\newcommand{\g}{\mathfrak{g}}
\newcommand{\n}{\mathfrak{n}}
\renewcommand{\be}{\mathbf{1}}
\newcommand{\bt}{\boxtimes}
\newcommand{\ot}{\otimes}
\newcommand{\beq}{\begin{equation}}
\newcommand{\eeq}{\end{equation}}
\newcommand{\lb}{\label}
\newcommand{\bpf}{\begin{proof}}
\newcommand{\epf}{\end{proof}}
\newcommand{\bth}{\begin{theorem}}
\renewcommand{\eth}{\end{theorem}}
\newcommand{\bpr}{\begin{proposition}}
\newcommand{\epr}{\end{proposition}}
\newcommand{\ble}{\begin{lemma}}
\newcommand{\ele}{\end{lemma}}
\newcommand{\bco}{\begin{corollary}}
\newcommand{\eco}{\end{corollary}}
\newcommand{\bde}{\begin{definition}}
\newcommand{\ede}{\end{definition}}
\newcommand{\bex}{\begin{example}}
\newcommand{\eex}{\end{example}}
\newcommand{\bre}{\begin{remark}}
\newcommand{\ere}{\end{remark}}
\newcommand{\bcj}{\begin{conjecture}}
\newcommand{\ecj}{\end{conjecture}}
\newcommand{\End}{\text{End}}%{\mathcal{E}\it{nd}} 
\begin{document}
\title[Autoequivalences of tensor categories attached to quantum groups]{Autoequivalences of tensor categories attached to quantum groups at roots of $1$}
\date{\today}

\author{Alexei Davydov}
\address{A.D.: Department of Mathematics, Ohio University, Athens, OH 45701, USA}
\email{alexei1davydov@gmail.com}
\author{Pavel Etingof}
\address{P.E.: Department of Mathematics, Massachusetts Institute of Technology,
Cambridge, MA 02139, USA}
\email{etingof@math.mit.edu}
\author{Dmitri Nikshych}
\address{D.N.: Department of Mathematics and Statistics,
University of New Hampshire,  Durham, NH 03824, USA}
\email{dmitri.nikshych@unh.edu}

\begin{abstract}
We compute the  group of  braided tensor autoequivalences 
and the Brauer-Picard group of the representation category  
of the small quantum group $\mathfrak{u}_q(\mathfrak{g})$,
where $q$ is a root of unity.
\end{abstract}  

\maketitle

\centerline{\bf To the memory of Bertram Kostant} 

\baselineskip=18pt

%%%%%%%%%%%%%%%%%%%%%%%%%%%%%%%%%%%%%%%%%%%%%%%%%%%%%%%%%%%%%%%%
%%%%%%%%%%%%%%%%%%%%%%%%%%%%%%%%%%%%%%%%%%%%%%%%%%%%%%%%%%%%%%%%
%%%%%%%%%%%%%%%%%%%%%%%%%%%%%%%%%%%%%%%%%%%%%%%%%%%%%%%%%%%%%%%%

\section{Introduction} 

Let $k$ be an algebraically closed field of characteristic zero. Let $G$ be a simple algebraic group over $k$, and let $\g=\Lie(G)$ be its Lie algebra. 
Let $q$ be a root of unity of odd order coprime to $3$ if $G$ is of type $G_2$, and 
coprime to the determinant of the Cartan matrix of $G$. 
Let $\mathfrak{u}_q(\g)$ be Lusztig's small quantum group attached to $\g$ \cite{Lu1}. Then $\mathfrak{u}_q(\g)$ is a quasitriangular Hopf algebra, so the category of its finite dimensional representations $\Rep\mathfrak{u}_q(\g)$ is a finite braided tensor category \cite{EGNO}.  One of the main goals of this paper is to compute the Picard group  $\Pic(\Rep\mathfrak{u}_q(\g))$ of this category, i.e., the group of equivalence classes of invertible $\Rep\mathfrak{u}_q(\g)$-module categories. Picard groups of braided tensor categories and, in particular, Brauer-Picard groups
of tensor categories play a crucial role in classification of graded extensions \cite{ENO} and also appear as symmetry groups
of three-dimensional topological field theories \cite{FS}.
It is known that $\Pic(\Rep\mathfrak{u}_q(\g))$ is isomorphic to the group $\Aut^{\rm br}(\Rep\mathfrak{u}_q(\g))$ of braided autoequivalences 
of $\Rep\mathfrak{u}_q(\g)$ \cite{DN, ENO}. We show under some restrictions on $q$ that $\Aut^{\rm br}(\Rep\mathfrak{u}_q(\g))$ is isomorphic to the group $\Aut(\g)$ of automorphisms of $\g$, i.e., $\Aut^{\rm br}(\Rep\mathfrak{u}_q(\g))=\Gamma\ltimes G^{\rm ad}$, where $G^{\rm ad}$ is the adjoint group of $G$ and $\Gamma=\Gamma_\g$ is the automorphism group of the Dynkin diagram of $\g$. Namely, we prove this when the order $\ell$ of $q$ is sufficiently large and also for classical groups $G=SL_N,Sp_N,SO_N$ if $\ell>N$. 

Moreover, we show that $\Rep\mathfrak{u}_q(\g)$ has only two braidings (the standard one and its reverse) and deduce that any tensor autoequivalence of $\Rep\mathfrak{u}_q(\g)$ is necessarily braided. Thus, the group of tensor autoequivalences (also known as the group of biGalois objects) of $\Rep\mathfrak{u}_q(\g)$ is isomorphic to $\Gamma\ltimes G^{\rm ad}$.  This generalizes the result of Bichon \cite{Bi2}, who proved this fact for $\g={\mathfrak{sl}}_2$.

We also consider the braided tensor category $\O_q(G)-\comod$ of finite dimensional comodules over the function algebra $\O_q(G)$, which is the $G$-equivariantization of $\Rep\mathfrak{u}_q(\g)$. We show that every braided autoequivalence of $\O_q(G)-\comod$ comes from a Dynkin diagram automorphism if $\ell$ is sufficiently large, and prove a similar result in the non-braided case. 
This generalizes a result of Neshveyev and Tuset \cite{NT1,NT2}, who proved this when $q$ is not a root of unity. 
We also show this for the classical groups $SL_N,Sp_N,SO_N$ if $\ell>N$. 

As a tool, we introduce the notion of a finitely dominated tensor category.
We show that the category of comodules over a finitely presented Hopf algebra is finitely dominated and 
prove that tensor autoequivalences of a finitely dominated category that preserve a tensor generator form an algebraic group. 
While this theory plays an auxiliary role in our paper, it may be of independent interest. 

We expect that the main results of this paper extend without significant difficulties to roots of unity of arbitrary order $\ell$, not necessarily satisfying the above coprimeness assumptions 
(at least when $\ell$ is sufficiently large). However, this would require some important modifications in the statements. 
Notably, if $\ell=2dr$, where $d=1,2,3$ is the the ratio of squared norms of long
and short roots of $G$, and $G$ is simply connected, then $\mathcal{O}_q(G)-\comod$ is a $G^L$-equivariantization (rather than a $G$-equivariantization)
of an appropriate version of $\Rep {\mathfrak{u}}_q(\g)$, where $G^L$ is the Langlands dual group 
of $G$, see \cite{AG}. Therefore, we expect that in this case $\Aut^{\rm br}(\Rep {\mathfrak{u}}_q(\g))\cong \Gamma\ltimes G^L$ (note that by definition $G^L=(G^L)^{\rm ad}$).   

The paper is organized as follows. In Section \ref{par} we give preliminaries and auxiliary results. In Section \ref{tetc} we develop the theory of finitely dominated tensor categories and study groups of tensor autoequivalences of such categories. In Section \ref{tao} we classify tensor autoequivalences of 
$\O_q(G)-\comod$. Finally, in Section \ref{tau} we prove that any tensor autoequivalence of $\Rep \mathfrak{u}_q(\g)$ is braided 
and classify such autoequivalences. As a consequence, we compute the Brauer-Picard groups of $\Rep \mathfrak{u}_q(\g)$ and 
$\Rep \mathfrak{u}_q(\mathfrak{b})$. 

{\bf Acknowledgements.} We are very grateful to Julien Bichon, 
Ken Brown, David Kazhdan, George Lusztig, Sergey Ne\-shvey\-ev, Victor Ostrik, Noah Snyder, and Milen Yakimov 
for useful discussions. The work of P.E.\ was partially supported by the NSF
grant DMS-1502244.  The work of D.N.\ was partially supported by the NSA grant H98230-16-1-0008.

\section{Preliminaries and auxiliary results} \label{par}

Let $k$ be an algebraically closed field. In this paper we consider tensor categories
over $k$ \cite[Definition 4.1.1]{EGNO} which we will simply refer to as {\em tensor categories}.  
The most basic example of a tensor category is the trivial tensor category $\Vec$, 
i.e., the category of finite dimensional vector spaces over $k$. More generally, 
given a Hopf $k$-algebra $H$, the category $\Rep H$ of finite dimensional left $H$-modules
and the category $H-\comod$ of finite dimensional left $H$-comodules 
are examples of tensor categories. For a tensor category $\C$, let $\Aut(\C)$
denote the group of isomorphism classes  of tensor autoequivalences of $\C$.

%%%%%%%%%%%%%%%%%%%%%%%%%%%%%%%%%%%%%%%%%%%%%%%%%%%%%%%%%%%%%%%%
\subsection{Braided tensor categories and their Picard groups}
\label{prelim:braidings}

We refer the reader to \cite[Chapter 8]{EGNO} for basic definitions related to braided tensor categories.
Let $\C$ be a braided tensor category with braiding $c_{X,Y}:X\ot Y \to Y \ot X$. The {\em reverse}
braiding  of $c$ is, by definition, $c^{\rev}_{X,Y} := c_{Y,X}^{-1}$, $X,Y\in \C$.  We will denote by $\C^\rev$
the tensor category $\C$ with the reverse braiding.  Let $\Aut^{\rm br}(\C)$ denote the group
of isomorphism classes of braided tensor autoequivalences of $\C$.

Let $\Z(\C)$ denote the Drinfeld center of $\C$. 
Then the assignment $X \mapsto (X,\, c_{-,X})$ is a braided
embedding (i.e., a fully  faithful braided tensor functor) $\C \hookrightarrow \Z(\C)$.  
Similarly, the assignment $X \mapsto (X,\, c^{-1}_{X,-})$ is a braided embedding $\C^\rev \hookrightarrow \Z(\C)$. 
These embeddings combine together into a single braided tensor functor
\begin{equation}
\label{factorization map}
\C \bt \C^\rev \to \Z(\C).
\end{equation}

Assume in addition that $\C$ is finite (i.e., has finitely many simple objects  and enough projective objects). 
We say that $\C$ is {\em factorizable} if  the functor \eqref{factorization map} is an equivalence.

\begin{lemma}
\label{two and only two}
Let $\C$ be a factorizable braided tensor category with braiding $c$. 
Suppose that $\C$ is not pointed and that $\C$ has no  proper non-trivial tensor subcategories. 
Then $\C$ has exactly two braidings: $c$ and $c^{\rm rev}$. 
\end{lemma}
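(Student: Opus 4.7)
The plan is to identify braidings on the tensor category $\C$ with tensor sections $F$ of the forgetful functor $U: \Z(\C) \to \C$, and then to exploit factorizability to show that there are only the two obvious sections. Under the equivalence \eqref{factorization map}, one may identify $\Z(\C) \simeq \C \boxtimes \C^\rev$ in such a way that $U$ becomes the tensor product $\otimes : \C \boxtimes \C^\rev \to \C$; the canonical sections $X \mapsto (X, c_{-,X})$ and $X \mapsto (X, c^{-1}_{X,-})$ corresponding to $c$ and $c^\rev$ become $X \mapsto X \boxtimes \mathbf{1}$ and $X \mapsto \mathbf{1} \boxtimes X$, with images the two coordinate subcategories $\C_1 := \C \boxtimes \Vec$ and $\C_2 := \Vec \boxtimes \C^\rev$ of $\Z(\C)$.

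Given any braiding $c'$ on $\C$ with corresponding section $F$, let $\D := F(\C) \subseteq \Z(\C)$ denote its essential image. A tensor section of $U$ is automatically a tensor embedding (faithfulness is immediate, and each simple $X$ of $\C$ maps to a simple of $\Z(\C)$ whose image under $U$ is $X$, forcing preservation of simples and injectivity on isomorphism classes), so $\D$ is a fusion subcategory of $\Z(\C)$ and $U|_\D : \D \to \C$ is the inverse tensor equivalence. In particular, $\FPdim(\D) = \FPdim(\C)$. Each intersection $\D \cap \C_i$ is a tensor subcategory of $\C_i \simeq \C$, hence by the simplicity hypothesis is either $\Vec$ or $\C_i$. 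If $\D \cap \C_1 = \C_1$ then $\D \supseteq \C_1$ and comparison of Frobenius--Perron dimensions forces $\D = \C_1$, yielding $c' = c$; analogously, $\D \cap \C_2 = \C_2$ gives $c' = c^\rev$.

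The remaining case $\D \cap \C_1 = \D \cap \C_2 = \Vec$ must be ruled out. The set of simples of $\C$ appearing as a first tensor factor of a simple of $\D$ is the set of simples of a fusion subcategory of $\C$ (closure under tensor products, duals, and summands is inherited from the corresponding properties of $\D$), hence by simplicity equals either $\{\mathbf{1}\}$ or all simples of $\C$. The first possibility would force $\D \subseteq \C_2$ and hence $\D = \D\cap \C_2 = \Vec$, contradicting $\FPdim(\D) > 1$. So for every simple $X \in \C$ there exists a simple $X \boxtimes Y \in \D$. Since $Y$ is simple, $\mathbf{1}$ appears with multiplicity one in $Y \otimes Y^*$, so every simple summand $P$ of $X \otimes X^*$ gives rise to a summand $P \boxtimes \mathbf{1}$ of
\[
(X \boxtimes Y) \otimes (X \boxtimes Y)^* = (X \otimes X^*) \boxtimes (Y \otimes Y^*) \in \D;
\]
this $P \boxtimes \mathbf{1}$ then lies in $\D \cap \C_1 = \Vec$, forcing $P = \mathbf{1}$. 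Hence $X \otimes X^* \simeq \mathbf{1}$, so $X$ is invertible. Since every simple of $\C$ is then invertible, $\C$ would be pointed, contradicting the hypothesis.

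The main subtlety I anticipate is largely bookkeeping in the final step: verifying that $\D$ is closed under subobjects (which follows from its being a full tensor subcategory of the semisimple category $\Z(\C)$), that the set of ``first-coordinate'' simples really assembles into a fusion subcategory rather than only a tensor-closed set of simples, and that the multiplicity of $\mathbf{1}$ in $Y \otimes Y^*$ is genuinely positive. All of these are standard but warrant explicit verification in the write-up.
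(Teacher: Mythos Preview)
Your argument follows the same overall strategy as the paper's proof: identify braidings with sections of the forgetful functor $\Z(\C)\to\C$, use factorizability to write $\Z(\C)\simeq\C\bt\C^\rev$, and analyze how the image $\D$ of a section meets the coordinate subcategories $\C_1=\C\bt\Vec$ and $\C_2=\Vec\bt\C^\rev$. In the semisimple case your proof is correct.

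The genuine gap is that you tacitly assume semisimplicity throughout: you call $\D$ a \emph{fusion} subcategory, you take direct \emph{summands} rather than subquotients of tensor products, and you justify closure of $\D$ under subobjects by appealing to ``the semisimple category $\Z(\C)$''. But the lemma is stated for finite factorizable braided tensor categories with no semisimplicity hypothesis, and the paper's application is to $\Rep\mathfrak{u}_q(\g)$, which is not semisimple. The repair is routine but necessary: replace summands by subquotients everywhere (the paper does exactly this, noting that $(X^*\ot X)\bt\be$ is a \emph{subquotient} of $(X\bt Y)^*\ot(X\bt Y)$ because $\be$ is a subquotient of $Y^*\ot Y$), and observe that the image of a section arising from a braiding $c'$ is closed under subquotients by naturality of $c'$ (a subobject of $(X,c'_{-,X})$ in $\Z(\C)$ is necessarily $(Y,c'_{-,Y})$ for $Y\subset X$).

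A minor inefficiency: your detour through ``the set of first-coordinate simples forms a tensor subcategory of $\C$'' is not needed. The paper argues more directly: once $\D\cap\C_1=\Vec$ forces every first coordinate $X$ of a simple $X\bt Y\in\D$ to be invertible, non-pointedness of $\D\cong\C$ yields some simple $X\bt Y\in\D$ with $Y$ non-invertible; then $\be\bt(Y^*\ot Y)\in\D\cap\C_2$ is nontrivial, so $\D\cap\C_2=\C_2$ and hence $\D=\C_2$.
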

\begin{proof}
A braiding on $\C$ is the same thing as a section of the forgetful functor $F:\Z(\C)\to \C$, i.e.,
a tensor subcategory $\widetilde{\C}\subset \Z(\C)$ such that $F|_{\widetilde{\C}} : \widetilde{\C}\to \C$
is an equivalence.  By the hypothesis, $\widetilde{\C} \cap (\C\bt \Vec)$ is  either  $\C\bt \Vec$
or  $\Vec$. The former case 
corresponds to the original braiding $c$. 
Let us deal with the latter case. We will argue that $\widetilde{\C} \cap (\Vec\bt \C^\rev)$ is non-trivial, 
and, hence,  $\widetilde{\C} = \Vec\bt \C^\rev$, which corresponds to the reverse braiding $c^\rev$.

Any simple object of $\Z(\C)\cong \C\bt\C^\rev$  is of the form $X\bt Y$, where $X,\,Y$ are simple objects of $\C$. 
If $X\bt Y$ is in $\widetilde{\C}$ then $(X^*\ot X)\bt \be \in \widetilde{\C}$ since it is a subquotient of $(X\bt Y)^*\otimes (X\bt Y)$. 
Thus,  $(X^*\ot X)\bt \be \in \widetilde{\C}\cap (\C\bt \Vec)$, so $X$ must be invertible. Similarly,  $\be\bt (Y^*\ot Y) \in \widetilde{\C} \cap (\Vec\bt\C^\rev)$. Since $\widetilde{\C}$ is non-pointed,
one can choose a non-invertible $Y$. Therefore,  $\widetilde{\C} \cap (\Vec\bt \C^\rev) \neq \Vec$, as required.
\end{proof}

Let $\C$ be a finite braided tensor category. 
By definition, the {\em Picard group} $\Pic(\C)$ of  $\C$ \cite{ENO,DN} is the group of equivalence classes
of invertible $\C$-module categories.  When $\C$ is factorizable, there is a canonical group isomorphism
\begin{equation}
\label{Pic=Autbr}
\Pic(\C) \cong \Aut^{\rm br}(\C),
\end{equation}
see \cite[Theorem 6.2]{ENO} and \cite[Corollary 4.6]{DN}. 

Thus, computing the Picard group of a factorizable braided tensor category $\C$ amounts to computing
its group of braided tensor autoequivalences.

\begin{corollary}
\label{automatically braided}
Let $\C$ be a factorizable braided tensor category such that $\C \ncong\C^{\rm rev}$.  
Suppose that $\C$ is not pointed and that $\C$ has no  proper non-trivial tensor subcategories. 
Then any tensor autoequivalence of $\C$ is automatically braided and
\begin{equation}
\label{Pic=Aut}
\Pic(\C) \cong \Aut(\C).
\end{equation}
\end{corollary}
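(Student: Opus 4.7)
My plan is to leverage Lemma \ref{two and only two} together with the isomorphism \eqref{Pic=Autbr} to reduce the statement to a short argument by contradiction. The key observation is that a tensor autoequivalence $F : \C \to \C$ acts on the set of braidings of $\C$ by transport of structure: given the braiding $c$ on $\C$, one obtains a new braiding $c^F$ on $\C$ by setting
\[
c^F_{X,Y} := F^{-1}\bigl(c_{F(X),F(Y)}\bigr),
\]
suitably composed with the tensor structure isomorphisms of $F$ and $F^{-1}$. By construction, $F$ becomes a braided equivalence from $(\C, c^F)$ to $(\C, c)$.

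Next I would invoke Lemma \ref{two and only two}: under the hypotheses on $\C$, there are only two braidings, namely $c$ and $c^{\rev}$. Hence $c^F \in \{c, c^{\rev}\}$. If $c^F = c$, then $F$ is braided, which is exactly what we want. If $c^F = c^{\rev}$, then $F$ is a braided equivalence $\C^{\rev} \to \C$, contradicting the standing assumption that $\C \not\cong \C^{\rev}$ as braided tensor categories. Therefore the second case cannot occur, and every tensor autoequivalence of $\C$ is automatically braided, giving $\Aut(\C) = \Aut^{\rm br}(\C)$.

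Combining this with the isomorphism \eqref{Pic=Autbr}, valid because $\C$ is factorizable, yields $\Pic(\C) \cong \Aut^{\rm br}(\C) = \Aut(\C)$, which is \eqref{Pic=Aut}.

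I do not expect any real obstacle here: the content is entirely in the previous lemma and in \eqref{Pic=Autbr}. The only point one should be slightly careful about is the verification that $c^F$ is indeed a braiding (hexagon axioms) and that $F$ intertwines $c^F$ with $c$, but both are routine consequences of the definition of a tensor equivalence once the tensor structure isomorphisms of $F$ are unpacked.
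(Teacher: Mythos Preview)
Your proposal is correct and takes essentially the same approach as the paper, which simply states that the corollary follows from Lemma~\ref{two and only two}. You have merely spelled out the transport-of-braiding argument and the appeal to \eqref{Pic=Autbr} that the paper leaves implicit.
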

\begin{proof}
Follows from Lemma~\ref{two and only two}.
\end{proof}

%%%%%%%%%%%%%%%%%%%%%%%%%%%%%%%%%%%%%%%%%%%%%%%%%%%%%%%%%%%%%%%%

\subsection{Algebraic group actions on categories, equivariantization and de-equ\-iva\-rian\-ti\-za\-tion} 
 
Let us recall the construction of \cite{AG}. Let $G$ be an abstract
group.  An action of $G$ on a category $\C$ \cite[2.7]{EGNO} is a
collection of functors $T_g: \C\to \C$ attached to each $g\in G$ such
that $T_1={\rm Id}_\C$, equipped with functorial isomorphisms
$\gamma_{g,h}: T_g\circ T_h\to T_{gh}$ satisfying the 2-cocycle
condition
$\gamma_{f,gh}\circ T_f(\gamma_{g,h})=\gamma_{fg,h}\circ
\gamma_{f,g}$.
Given such an action, a $G$-equivariant object in $\C$ is an object
$X$ together with a collection of isomorphisms $u_g: T_g(X)\to X$ such
that $u_{gh}\circ \gamma_{g,h}=u_g\circ T_g(u_h)$, and the
$G$-equivariantization $\C^G$ of $\C$ is the category of
$G$-equivariant objects in $\C$. If $\C$ is monoidal or braided, then
we require that the functors $T_g$ be monoidal, respectively braided,
and $\gamma_{g,h}$ preserve the tensor (respectively, braided) structure, 
in which case the equivariantization $\C^G$ inherits the same
structure.

For an affine group scheme $G$ over $k$, let $\O(G)$ denote the algebra of regular functions on $G$ 
(i.e., the coordinate Hopf algebra), and let $\O(G)-\modd$ denote the category of $\O(G)$-modules.
If $\C$ is artinian and $G$ is finite, then we can represent the collection of functors $\lbrace{T_g\rbrace}$ as a single functor 
$T: \C\to \O(G)-\modd\boxtimes \C$, where the Deligne tensor product $\O(G)-\modd\boxtimes \C$ may be interpreted as the category of $\O(G)$-modules in 
$\C$. Namely, $T_g(X)$ is the fiber of $T(X)$ at $g\in G$, and $T(X)=\oplus_{g\in G}T_g(X)$ (note that $\O(G)-\modd\cong \Vec_G$
in the case of finite $G$). Then the isomorphisms $\gamma_{g,h}$ are also combined into a single isomorphism $\gamma: ({\rm Id}\otimes T)\circ T\cong (m^*\otimes {\rm Id})\circ T$, where 
$$
m^*: \Vec_G=\O(G)-\modd \to \O(G)-\modd\boxtimes \O(G)-\modd=\O(G)\otimes \O(G)-\modd=\Vec_{G\times G}
$$ 
is the functor of sheaf-theoretic pullback under the multiplication map $m: G\times G\to G$: 
$m^*(M)_{g,h}=M_{gh}$.  Similarly, the morphisms $u_g$ are combined into a single morphism 
$u: T(X)\to T_{\rm triv}(X)$, where $T_{\rm triv}$ is the functor attached to the trivial $G$-action on $\C$. 
If $\C$ is monoidal or braided, we require that $T$ be a tensor (respectively, braided) functor, where 
the tensor product in $\O(G)-\modd\boxtimes \C$ is over $\mathcal{O}(G)$, and that $\gamma$ preserve these structures
in an appropriate sense. 

In this form, the definitions of an action and equivariantization make sense when $G$ is an {\it affine algebraic} group
(as they formalize the requirements that the functors $T_g$ and morphisms $\gamma_{g,h}$ depend algebraically on the group elements). 
Namely, in this case $\O(G)$ stands for the algebra of regular functions on $G$, $\Vec_G$ is replaced by the category of quasicoherent 
sheaves ${\rm QCoh}_G$, and $m^*(M)=\O(G\times G)\otimes_{\O(G)}M$, where $\O(G)$ embeds into $\O(G\times G)=\O(G)\otimes \O(G)$ via 
the coproduct (induced by the product in $G$). Also, $T(X)$ should be required to be an 
$\O(G)$-module in the ${\rm ind}$-completion of $\C$, rather than in $\C$ itself, and it is no longer the direct sum of $T_g$
(since $\O(G)$ may be infinite dimensional and non-semisimple). With these definitions, if $\C$ is a finite tensor category then $\C^G$ is a tensor category (in general, not finite), and if $\C$ is braided then so is $\C^G$ (provided that the $G$-action preserves the tensor structure and the braiding). Moreover, $\Rep G$ sits as a tensor subcategory in $\C^G$ (namely, the category of equivariant objects which are multiples to $\bold 1$ as objects of $\C$), and in the braided case this subcategory is contained in the M\"uger center of $\C^G$ (i.e., the squared braiding of an object of $\Rep G$ with any object of $\C^G$ is the identity).  

Finally, given a tensor category $\D$ together with a braided tensor functor $\Rep G\to {\mathcal{Z}}(\D)$ which gives rise to an inclusion $\Rep G\hookrightarrow \D$ (for an affine algebraic group $G$), we can define the de-equivariantization $\D_G$ of $\D$ to be the category of finitely generated 
$\O(G)$-modules in the ind-completion of $\D$, where $\O(G)$ is the algebra of regular functions on $G$ equipped with the action of $G$ by left (or, alternatively, right) translations \cite[8.23]{EGNO}. Moreover, if $\D$ is braided and $\Rep G$ lies in the M\"uger center of $\D$ then $\D_G$ is braided and carries a $G$-action,  and we have 
$(\D_G)^G\cong \D$.  Conversely, for a braided tensor category $\C$
with a $G$-action  we have $(\C^G)_G \cong \C$, i.e., equivariantization and de-equivariantization are inverses of each other
(this fact is essentially proved in  \cite{AG} and can also be obtained  by adjusting arguments of \cite[Section 4]{DGNO} to the infinite setting).
 
\subsection{Quantum groups at roots of unity}
\label{prelim:roots of 1}

Let ${\rm char}(k)=0$. Let $G$ be a simple algebraic group over $k$ and let $\g$ be the associated  Lie algebra. Let $\O(G)$ denote the coordinate Hopf algebra of $G$ and let $\O_q(G)$ denote its quantized form, see \cite{BG, KS}.

Let $\ell$  be an odd integer, relatively prime to the determinant of the Cartan matrix of $G$ and to $3$ if $\g$ is of type $G_2$. Let $q$ be a primitive $\ell$-th root 
of unity in $k$. Let $\mathfrak{u}_q(\mathfrak{g})$ be the small quantum group, i.e., the Frobenius-Lusztig kernel \cite{Lu1}. 
Recall \cite[XI.6.3]{T} that $\mathfrak{u}_q(\mathfrak{g})$ is a factorizable quasitriangular Hopf algebra. Also, it was shown in \cite{DL}
that there is  a cocleft central exact sequence of Hopf algebras
\begin{equation}
\label{cocleft sequence}
k \to \O(G) \to \O_q(G) \to \mathfrak{u}_q(\g)^* \to k\ .
\end{equation}
Moreover, the pullback of the coquasitriangular structure of $\mathfrak{u}_q(\g)^*$ (dual to the universal $R$-matrix of 
$\mathfrak{u}_q(\g)$) to $\O_q(G)$ defines a coquasitriangular structure on $\O_q(G)$, giving $\O_q(G)-\comod$ the structure of a braided tensor category such that the forgetful functor 
$\O_q(G)-\comod\to \mathfrak{u}_q(\g)^*-\comod=\Rep \mathfrak{u}_q(\g)$ is braided.

It follows that there is a natural action of $G$ on $\Rep\mathfrak{u}_q(\mathfrak{g}) = \mathfrak{u}_q(\g)^*-\comod$ as a braided category. Furthermore, it follows from \cite{AG, AGP} that with respect to this action $\O_q(G)-\comod$ is 
equivalent to the $G$-equivariantization of $\Rep \mathfrak{u}_q(\g)$, which can, in turn, 
be recovered as the de-equivariantization of $\O_q(G)-\comod$: 
 \begin{equation}
\label{OqG = equivariantization}
\O_q(G)-\comod = (\Rep \mathfrak{u}_q(\mathfrak{g}))^G, \quad  \Rep \mathfrak{u}_q(\mathfrak{g})
=(\O_q(G)-\comod)_G\ . 
\end{equation}
More precisely, \cite{AG} considers the case when $q$ is a root of unity of order $2dr$, where $d=1,2,3$ is the ratio of the squared norm of long roots of $G$ to the squared norm of short roots. In this case the role of $\O(G)$ is played by $\O(G^L)$, where $G^L$ is the Langlands dual group to $G$, but the arguments of \cite{AG} apply without significant changes to our case.   

It is easy to see that a maximal torus $T\subset G$ acts on $\Rep \mathfrak{u}_q(\g)$ by Hopf algebra automorphisms of $\mathfrak{u}_q(\g)$ (i.e., by conjugation). 
Hence, the center of $G$ acts trivially on $\Rep \mathfrak{u}_q(\g)$ (as it is contained in $T$, and conjugation by a central element induces the identity automorphism). 
On the other hand, the action of $G$ is non-trivial, as $\mathcal{O}_q(G)-\comod$ is not equivalent to $\Rep u_q(\g)\boxtimes \Rep G$. 
Thus, we get 

\begin{proposition}\label{inclus} 
There is an inclusion $G^{\rm ad}\hookrightarrow \Aut^{\rm br}(\Rep \mathfrak{u}_q(\g))$. 
\end{proposition}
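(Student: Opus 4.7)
The plan is to convert the $G$-action on $\Rep\mathfrak{u}_q(\g)$ provided by the equivariantization \eqref{OqG = equivariantization} into a group homomorphism
\[
\Phi\colon G\to\Aut^{\rm br}(\Rep\mathfrak{u}_q(\g)),\qquad g\mapsto [T_g],
\]
and to prove $\ker(\Phi)=Z(G)$, from which the desired injection $G^{\rm ad}=G/Z(G)\hookrightarrow\Aut^{\rm br}(\Rep\mathfrak{u}_q(\g))$ is immediate.

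The inclusion $Z(G)\subseteq\ker(\Phi)$ is essentially the content of the first observation preceding the proposition: since the restriction of the action to the maximal torus $T$ is realized by Hopf algebra conjugation on $\mathfrak{u}_q(\g)$, a central element $z\in Z(G)\subset T$ acts by the identity automorphism and hence gives the identity autoequivalence. Thus $\Phi$ descends to a group homomorphism $\bar\Phi\colon G^{\rm ad}\to\Aut^{\rm br}(\Rep\mathfrak{u}_q(\g))$. To promote $\bar\Phi$ to an injection, I would invoke the classical Chevalley-type theorem that $G^{\rm ad}(k)$ is simple as an abstract group for a simple simply connected algebraic group $G$ over an algebraically closed field of characteristic zero; consequently, any non-trivial homomorphism out of $G^{\rm ad}$ is automatically injective.

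It therefore remains to show $\bar\Phi\ne 1$, which is designed to follow from the second observation in the preceding paragraph: if $\bar\Phi$ were trivial, then the $G$-action on $\Rep\mathfrak{u}_q(\g)$ would be trivializable as a $2$-action, forcing the equivariantization to degenerate into $\Rep\mathfrak{u}_q(\g)\bt\Rep G$ and contradicting the stated non-equivalence $\O_q(G)-\comod\ncong\Rep\mathfrak{u}_q(\g)\bt\Rep G$. The main delicate point is exactly this implication: a priori, triviality of $\bar\Phi$ only supplies isomorphisms $T_g\cong\id$ and does not rule out a non-trivial $2$-cocycle that could still obstruct the Deligne-product collapse of the equivariantization. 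I would close the gap either by an $H^2$-vanishing argument for simple simply connected $G$ acting on the tensor automorphisms of $\id_{\Rep\mathfrak{u}_q(\g)}$, or, more concretely, by a direct torus computation: for non-central $t\in T$, any natural tensor isomorphism $T_t\cong\id$ would have to scale the weight-$\mu$ subspace of every simple module by $\tilde\mu(t)$ for some tensor-compatible lift $\tilde\mu\in\Lambda$ of $\mu\in\Lambda/\ell\Lambda$, and no such coherent lift exists for generic $t$.
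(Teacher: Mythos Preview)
Your approach is essentially the paper's: the paragraph preceding the proposition already records that $Z(G)$ acts trivially (torus conjugation) and that the $G$-action is nontrivial (via $\O_q(G)-\comod\ncong\Rep\mathfrak{u}_q(\g)\boxtimes\Rep G$), and then asserts the inclusion. You make explicit the step the paper leaves implicit, namely invoking the abstract simplicity of $G^{\rm ad}(k)$ to upgrade ``$\bar\Phi\neq 1$'' to ``$\bar\Phi$ injective.''

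The 2-cocycle issue you flag is real but has a one-line resolution already present in the paper: Lemma~\ref{noaut} (proved just after the proposition, from Lemma~\ref{noquot}) gives $\Aut^\otimes(\id_{\Rep\mathfrak{u}_q(\g)})=1$. Hence if $\bar\Phi=1$, the isomorphisms $T_g\cong\id$ are \emph{unique}, so they are automatically compatible with the coherence data $\gamma_{g,h}$ (and vary algebraically), and the action is trivializable on the nose. Your proposed $H^2$-vanishing is thus immediate, and the more delicate torus computation you sketch is not needed.
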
 

\begin{remark}
In the case of $\mathfrak{g}=\mathfrak{s}\mathfrak{l}_n$ the inclusion $G^{\rm ad}\hookrightarrow \Aut(\Rep \mathfrak{u}_q(\g))$
was established by Bichon \cite{Bi2}, who also proved that this inclusion is an isomorphism for $n=2$. 
\end{remark}

We also have the following (well known) lemmas. 

\begin{lemma}\label{noquot} ${\mathfrak{u}}_q(\g)$ has no nontrivial Hopf quotients. In particular, it has no nontrivial central grouplike elements. 
\end{lemma}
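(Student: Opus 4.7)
The plan is to split the lemma into two observations, the second of which follows formally from the first, and then to prove the core statement by studying a hypothetical Hopf ideal $I=\ker\pi$ via the defining generators. For the reduction, if $z\in\mathfrak{u}_q(\g)$ is a central grouplike with $z\neq 1$, centrality makes $(z-1)$ a two-sided ideal, $\Delta(z-1)=(z-1)\otimes z+1\otimes(z-1)$ makes it a coideal, and $S(z-1)=-z^{-1}(z-1)\in(z-1)$ checks the antipode condition, so $(z-1)$ is a Hopf ideal. Freeness of $\mathfrak{u}_q(\g)$ over the group algebra of $z$ (Nichols--Zoeller) gives $\dim\bigl(\mathfrak{u}_q(\g)/(z-1)\bigr)=\ell^{\dim\g}/\mathrm{ord}(z)$, strictly between $1$ and $\ell^{\dim\g}$, so this is a nontrivial Hopf quotient.

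For the core claim, let $\pi\colon\mathfrak{u}_q(\g)\twoheadrightarrow H$ be a surjective Hopf map with $H\neq k$, and write $\bar x=\pi(x)$. The first step is to show no $\bar E_i$ and no $\bar F_i$ vanishes. If $\bar E_i=0$ for some $i$, then $[E_i,F_i]=(K_i-K_i^{-1})/(q_i-q_i^{-1})$ forces $\bar K_i^2=1$; since $\bar K_i^\ell=1$ with $\ell$ odd, $\bar K_i=1$. For each Dynkin neighbor $j$ of $i$, the relation $K_iE_jK_i^{-1}=q^{d_ia_{ij}}E_j$ then gives $(q^{d_ia_{ij}}-1)\bar E_j=0$; the hypotheses on $\ell$ (odd, coprime to $\det A$, and coprime to $3$ in type $G_2$) guarantee $q^{d_ia_{ij}}\neq 1$, so $\bar E_j=0$. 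Connectedness of the Dynkin diagram propagates this to every simple root index, and the symmetric argument (applied to $\bar F_i$) kills all $\bar F_j$ and sends every $\bar K_j$ to $1$, forcing $H=k$, a contradiction.

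The second step is to show $\pi$ is injective, given that its images $\bar E_i,\bar F_i$ are all nonzero. The approach I would take is to translate to representation categories: $\pi$ corresponds to a full tensor subcategory $\mathcal{D}\subseteq\Rep\mathfrak{u}_q(\g)$ closed under subquotients and duals, and we want $\mathcal{D}=\Rep\mathfrak{u}_q(\g)$. Since $H\neq k$, $\mathcal{D}$ contains some nontrivial simple $L(\lambda)$. Closing $\{L(\lambda)\}$ under tensor products, duals, and subquotients produces composition factors whose highest weights generate, modulo $\ell Q$, an increasing subgroup of the weight lattice; using $\gcd(\ell,\det A)=1$, one shows this subgroup exhausts $\Lambda/\ell Q$. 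In particular, every fundamental simple $L(\omega_i)$ lies in $\mathcal{D}$, and since these tensor-generate $\Rep\mathfrak{u}_q(\g)$, we conclude $\mathcal{D}=\Rep\mathfrak{u}_q(\g)$ and $\ker\pi=0$.

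The main obstacle will be this tensor-generation/weight-propagation step: the no-vanishing step is a short explicit calculation from the defining commutation relations using the coprimality of $\ell$ with the small Cartan data, whereas showing that a single nontrivial simple $L(\lambda)$ tensor-generates the whole category requires a careful analysis of composition factors of iterated tensor products modulo $\ell Q$, which genuinely depends on $\gcd(\ell,\det A)=1$.
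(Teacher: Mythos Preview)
Your reduction of the second claim to the first and your Step~1 are both correct (the paper actually proves the ``no nontrivial central grouplike'' statement directly from the coprimality of $\ell$ with $\det A$, but your Nichols--Zoeller route is fine). The genuine gap is Step~2. The assertion that a single nontrivial simple $L(\lambda)$ tensor-generates $\Rep\mathfrak{u}_q(\g)$ is precisely the hard part, and your sketch---that the weights appearing in iterated tensor products exhaust $P/\ell P$---does not establish it: having every weight occur in some object of $\mathcal{D}$ is weaker than having every simple occur as a composition factor, since distinct simples share weights. Controlling which simples appear in $L(\lambda)^{\otimes n}$ at a root of unity is delicate; the statement you want is essentially Lemma~\ref{genera}, and that lemma's proof in the paper \emph{uses} the present lemma, so attempting to invoke anything like it here is circular. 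Note also that your Step~2 argument, as written, uses only $H\neq k$ and never the conclusion of Step~1, so Step~1 is doing no work.

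The paper avoids Step~2 altogether. Since $\mathfrak{u}_q(\g)$ is pointed, a Heyneman--Radford-type argument (phrased in the paper via the associated graded under the coradical filtration, whose dual is generated in degree~$1$) shows that any nonzero Hopf ideal $I$ already contains a nonzero $(1,g)$-skew-primitive. If this element is a trivial skew-primitive, i.e.\ a multiple of $g-1$, then $g\neq 1$ is non-central (by the directly-proved absence of nontrivial central grouplikes), so $g$ acts with eigenvalue $\neq 1$ on some $e_i$, forcing $e_i\in I$; otherwise the skew-primitive is itself one of the generators $e_i$ or $f_i$. Either way some $e_i$ (or $f_i$) lies in $I$, and now exactly your Step~1 propagation shows $I$ is the augmentation ideal. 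The moral: instead of splitting into ``some generator dies'' versus ``no generator dies'' and wrestling with the second case, the coradical-filtration input guarantees that the first case is the only one that can occur.
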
 

\begin{proof}
The second statement follows since $\ell$ is coprime to the determinant of the Cartan matrix of $\g$.

To prove the first statement, recall that ${\mathfrak{u}}_q(\g)$ is a pointed Hopf algebra, and let $H:={\rm gr}\,{\mathfrak{u}}_q(\g)$ be its associated graded Hopf algebra under the coradical filtration (it is defined by the same generators $e_i,\, f_i,\, K_i$ and the same
relations as ${\mathfrak{u}}_q(\g)$, except that
now $[e_i,\, f_i]=0$ for all $i$). Then $H^*$ is a pointed Hopf algebra generated in degree $1$. Hence, any proper Hopf ideal $I\ne 0$ in ${\mathfrak{u}}_q(\g)$ must contain a nonzero element of degree $1$ under the coradical filtration. Hence $I$ must contain a nonzero $(1,g)$-skew-primitive element for some  grouplike element $g$. If it is trivial, i.e., is a multiple of $g-1$, then $I$ must contain $e_i$ for some $i$ (since $g$ is not central and thus acts on some $e_i$ with eigenvalue $\ne 1$). Thus, in any case $I$ contains a nontrivial $(1,g)$-skew-primitive element, say $e_i$. 
Since 
$$
[e_i,f_i]=\frac{K_i-K_i^{-1}}{q-q^{-1}},
$$ 
we have $K_i-K_i^{-1}\in I$. Thus $K_i-1\in I$ (as $\ell$ is odd). Thus $f_i\in I$ and $e_j,f_j\in I$ for $j$ connected to $I$ in the Dynkin diagram of $G$ (as $K_i$ acts on these elements with eigenvalues $\ne 1$). Continuing in this way, we will get that $e_i,f_i,K_i-1\in I$ for all $i$ (as the Dynkin diagram of $G$ is connected). Hence $I$ is the augmentation ideal. This implies the required statement. 
\end{proof} 

\begin{lemma}\label{noaut} 
The only tensor automorphism of the identity functor of $\Rep \mathfrak{u}_q(\g)$ is the identity. 
\end{lemma}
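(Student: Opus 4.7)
The plan is to reduce this to the second assertion of Lemma \ref{noquot} via the standard Tannakian dictionary between tensor automorphisms of the identity functor on $\Rep H$ and central grouplike elements of the Hopf algebra $H$.

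First I would recall the correspondence. Given a tensor automorphism $\eta = \{\eta_V\}$ of $\id_{\Rep \mathfrak{u}_q(\g)}$, apply $\eta$ to the left regular representation $H := \mathfrak{u}_q(\g)$ to get an $H$-linear automorphism $\eta_H : H \to H$, which is right multiplication by some element $z := \eta_H(1) \in H$. Naturality of $\eta$ along the $H$-linear maps $H \to V$, $1 \mapsto v$ for varying $V$ and $v$, forces $\eta_V(v) = zv$ for every module $V$ and vector $v$; applying this to $V = H$ with the right action shows $z$ must be central. The tensor compatibility $\eta_{V \otimes W} = \eta_V \otimes \eta_W$, evaluated on $v \otimes w$ with $V = W = H$, $v = w = 1$, yields $\Delta(z) = z \otimes z$, so $z$ is a grouplike central element. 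Counital normalization gives $\eps(z) = 1$.

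Having established this dictionary, the statement reduces to showing that the only central grouplike element of $\mathfrak{u}_q(\g)$ is $1$. But this is precisely the last assertion of Lemma \ref{noquot}, which in turn followed from the coprimeness of $\ell$ with the determinant of the Cartan matrix. Thus $z = 1$, which means $\eta_V = \id_V$ for every $V$, and we are done.

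There is no real obstacle; the entire content has been prepared by the preceding lemma. The only step that requires care is checking the dictionary between tensor natural automorphisms of $\id_{\Rep H}$ and $(G(H) \cap Z(H))$, but this is standard and follows purely formally from the fact that every object of $\Rep H$ is a quotient of a direct sum of copies of the regular representation.
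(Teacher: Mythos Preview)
Your proposal is correct and follows exactly the paper's approach: the paper's proof is the single sentence ``This follows from Lemma~\ref{noquot}, since any such automorphism is defined by a central grouplike element,'' and you have simply unpacked the standard dictionary that justifies this sentence. The only minor wrinkle is that your phrasing mixes ``right multiplication by $z$'' on $H$ with ``$\eta_V(v)=zv$'' on general $V$; both are true, and comparing them on $V=H$ is precisely what forces $z$ to be central, so the argument is fine.
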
 

\begin{proof} This follows from Lemma \ref{noquot}, since any such automorphism is defined by a central grouplike element. 
\end{proof} 

Finally, we will need the following lemma. Let $P$ be the weight lattice of $G$, $P_+$ its dominant part, and $L_\lambda$ be the simple $\O_q(G)$-comodule with highest weight $\lambda\in P_+$. Recall that $\O(G)-\comod=\Rep G\subset \O_q(G)-\comod$ is the semisimple subcategory whose simple objects are $L_{\ell \lambda}$ for $\lambda\in P_+$ (\cite{Lu2,DL}). 

\begin{lemma}\label{genera} 
If $\lambda$ is not divisible by $\ell$ then the matrix elements of 
$L_\lambda$ and $L_\lambda^*$ generate $\O_q(G/C_\lambda)$, where $C_\lambda$ is a central subgroup of $G$.   
\end{lemma}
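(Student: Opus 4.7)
The plan is to let $A \subset \O_q(G)$ denote the Hopf subalgebra generated by the matrix elements of $L_\lambda$ and $L_\lambda^*$, which by Tannakian duality corresponds to the smallest tensor subcategory $\D \subseteq \O_q(G)-\comod$ containing $L_\lambda$ and $L_\lambda^*$, namely the full subcategory of subquotients of finite tensor products of these. Setting $C_\lambda := \Ker(\bar\lambda\colon Z(G) \to k^\times)$, where $\bar\lambda$ is the character of $Z(G)$ determined by $\lambda \in P_G$ under the duality $P_G/Q \cong Z(G)^\vee$, I would then aim to show $A = \O_q(G/C_\lambda)$ (so that $C_\lambda \subset Z(G)$ is automatically central).

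The inclusion $\D \subseteq \O_q(G/C_\lambda)-\comod$ follows immediately from a weight count. Since the Weyl group acts trivially on $P/Q$, the highest weight $-w_0\lambda$ of $L_\lambda^*$ satisfies $-w_0\lambda \equiv -\lambda \pmod Q$, so every weight of $L_\lambda^{\otimes a} \otimes L_\lambda^{*\otimes b}$ lies in $(a-b)\lambda + Q$. Hence any composition factor $L_\mu$ has $\bar\mu \in \mathbb{Z}\bar\lambda$ in $P_G/Q$, equivalently $\mu|_{C_\lambda} = 1$, so $L_\mu \in \O_q(G/C_\lambda)-\comod$.

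Next I would exploit the cocleft sequence \eqref{cocleft sequence} via the projection $\pi\colon \O_q(G) \to \mathfrak{u}_q(\g)^*$. By Lemma \ref{noquot} the Hopf algebra $\mathfrak{u}_q(\g)^*$ has no nontrivial Hopf subalgebras, so $\pi(A) \in \{k, \mathfrak{u}_q(\g)^*\}$. Using Steinberg's tensor product theorem I would decompose $L_\lambda \cong L_\nu \otimes L_{\mu_0}^{\mathrm{Fr}}$, where $\lambda = \nu + \ell\mu_0$ with $\nu$ restricted dominant; the hypothesis that $\lambda$ is not divisible by $\ell$ forces $\nu \neq 0$. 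Since $L_{\mu_0}^{\mathrm{Fr}}$ restricts trivially to the Frobenius--Lusztig kernel $\mathfrak{u}_q(\g)$, the restriction of $L_\lambda$ to $\mathfrak{u}_q(\g)$ is a nonzero multiple of the nontrivial simple $L_\nu$, which forces $\pi(A) = \mathfrak{u}_q(\g)^*$.

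For the classical part, the submodule of $L_\lambda^{\otimes \ell}$ generated by $v_\lambda^{\otimes \ell}$ is a highest-weight module of highest weight $\ell\lambda$, so it admits $L_{\ell\lambda}$ as a composition factor; by Steinberg, $L_{\ell\lambda}$ is the Frobenius pullback of the classical irreducible $G$-representation $V_\lambda$ and hence lies in $\O(G)-\comod$. Thus $V_\lambda^{\mathrm{Fr}}, V_\lambda^{*\mathrm{Fr}} \in \D$, and $A \cap \O(G)$ contains the Hopf subalgebra of $\O(G)$ generated by the matrix coefficients of $V_\lambda$ and $V_\lambda^*$; by the classical analogue (the tensor subcategory of $\Rep G$ generated by $V_\lambda$ and $V_\lambda^*$ equals $\Rep(G/C_\lambda)$), this subalgebra equals $\O(G/C_\lambda)$, and combined with the reverse inclusion from the second paragraph one gets $A \cap \O(G) = \O(G/C_\lambda)$. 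Finally $A = \O_q(G/C_\lambda)$ follows from the containments $\O(G/C_\lambda) \subseteq A \subseteq \O_q(G/C_\lambda)$ together with $\pi(A) = \mathfrak{u}_q(\g)^*$, via a faithful flatness / Nakayama argument using the cocleftness of \eqref{cocleft sequence}. The main obstacle I anticipate will be in this last step, specifically the clean isolation of $L_{\ell\lambda} = V_\lambda^{\mathrm{Fr}}$ as a composition factor of $L_\lambda^{\otimes \ell}$ (requiring the Steinberg decomposition) and deducing that the classical subcategory of $\D$ is exactly $\Rep(G/C_\lambda)$ via the classical generation statement.
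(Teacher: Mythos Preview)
Your proposal follows essentially the same strategy as the paper: form the Hopf subalgebra $A=H$, extract the classical piece $\O(G/C_\lambda)$ by finding $L_{\ell\lambda}$ inside $L_\lambda^{\otimes \ell}$, invoke Lemma~\ref{noquot} to force the quantum fiber to be all of $\mathfrak{u}_q(\g)^*$, and conclude $A=\O_q(G/C_\lambda)$. The paper defines $C_\lambda$ as the kernel of $G$ on $L_{\ell\lambda}$, which agrees with your $\Ker(\bar\lambda)$.

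Two differences in execution are worth noting. First, the paper does not invoke Steinberg's tensor product theorem to show the fiber $H_1$ is nontrivial; it simply observes that $H\neq \O(G/C_\lambda)$ because $L_\lambda\notin\Rep G$ when $\ell\nmid\lambda$, hence $H_1\neq k$. Your Steinberg decomposition is correct but heavier than needed. Second, and more substantively, the paper closes the argument categorically via (de-)equivariantization: once $H_1=\mathfrak{u}_q(\g)^*$, one has $(H{-}\comod)_{G/C_\lambda}=\Rep\mathfrak{u}_q(\g)$, so $H{-}\comod=(\Rep\mathfrak{u}_q(\g))^{G/C_\lambda}=\O_q(G/C_\lambda){-}\comod$ and thus $H=\O_q(G/C_\lambda)$. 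Your faithful-flatness/Nakayama sketch from $\pi(A)=\mathfrak{u}_q(\g)^*$ and $\O(G/C_\lambda)\subseteq A\subseteq\O_q(G/C_\lambda)$ gives $\O(G/C_\lambda)^+\cdot M=M$ for $M=\O_q(G/C_\lambda)/A$, which by Nakayama only yields $M_{\mathfrak{m}_1}=0$; to globalize you would still need to use the $G/C_\lambda$-homogeneity of $A$ (as a subcomodule algebra). This can be made to work, but the paper's equivariantization step bypasses it cleanly.
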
 

\begin{proof} Let $H\subset \O_q(G)$ be the Hopf subalgebra generated by the matrix elements of 
$L_\lambda$ and $L_\lambda^*$. It is clear that the $\O_q(G)$-comodule $L_{\ell\lambda}$ (which is a $G$-module) is a subquotient of $L_\lambda^{\otimes \ell}$. Let 
$C=C_\lambda\subset G$ be the kernel of the action of $G$ on $L_{\ell\lambda}$. Then the matrix elements of $L_{\ell\lambda}$ generate $\O(G/C)$, so $\O(G/C)\subset H$. 
Let $H_1$ be the fiber of the $\O(G/C)$-module $H$ at $1\in G/C$, a finite dimensional Hopf algebra. It is clear that $H\subset \O_q(G/C)$, and the fiber of $\O_q(G/C)$ at $1\in G/C$ is 
$\mathfrak{u}_q(\g)^*$, so $H_1$ is a Hopf subalgebra of $\mathfrak{u}_q(\g)^*$, hence $H_1^*$ is a Hopf quotient of 
$\mathfrak{u}_q(\g)$. Also $H_1\ne k$ since $\lambda$ is not divisible by $\ell$ and hence $H\ne \O(G/C)$. Since 
by Lemma \ref{noquot} $\mathfrak{u}_q(\g)$ has no nontrivial Hopf quotients, we get $H_1=\mathfrak{u}_q(\g)^*$. 
Thus, the de-equivariantization $(H-\comod)_{G/C}\subset (\mathcal{O}_q(G/C)-\comod)_{G/C}=\Rep \mathfrak{u}_q(\g)$
is actually the entire category $\Rep \mathfrak{u}_q(\g)$. Hence, $H-\comod=(\Rep \mathfrak{u}_q(\g))^{G/C}=\mathcal{O}_q(G/C)-\comod$ and $H=\O_q(G/C)$, as desired. 
\end{proof}   

\subsection{Compatibility of tensor functors on comodule categories with vector space dimensions.}
\begin{proposition}\label{compat} 
Let $H$ be a finitely generated Hopf algebra over $k$ of slower than exponential growth (e.g., of finite GK dimension).
Then 
\begin{enumerate}
\item[(i)] for any fiber functor $F: H-\comod\to \Vec$ one has $\dim F(X)\ge \dim X$; 
\item[(ii)] For any tensor autoequivalence $E: H-\comod\to H-\comod$ one has 
$\dim E(X)=\dim X$. 
\end{enumerate}
\end{proposition}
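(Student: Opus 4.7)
The plan is to prove part (i) first and then derive part (ii) from it. For (ii), the composition $F=(\text{forget})\circ E$ is a fiber functor, so applying (i) to both $E$ and $E^{-1}$ yields $\dim E(X)\ge\dim X$ and $\dim X=\dim E^{-1}(E(X))\ge\dim E(X)$, which combine to give $\dim E(X)=\dim X$.

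For (i), I would estimate $\dim F(X^{\ot n})$ in two ways and compare asymptotics as $n\to\infty$. On one side, $F$ is a tensor functor, so $\dim F(X^{\ot n})=(\dim F(X))^n$. On the other side, $F$ is faithful and exact, so every nonzero comodule has $F$-image of positive dimension; writing $L_n$ for the composition length of $X^{\ot n}$, this gives $\dim F(X^{\ot n})\ge L_n$.

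The next step is a lower bound on $L_n$ in terms of $\dim X$. Let $H^X_n$ be the subcoalgebra of $H$ spanned by the matrix coefficients of $X^{\ot n}$. For each simple composition factor $X_i$ of $X^{\ot n}$, the corresponding simple matrix-coefficient subcoalgebra (of dimension $d_i^2$, where $d_i=\dim X_i$) embeds into $H^X_n$, so $\max_i d_i\le\sqrt{\dim H^X_n}$. Writing $(\dim X)^n=\sum_i m_{n,i}d_i\le L_n\cdot\max_i d_i$ yields $L_n\ge (\dim X)^n/\sqrt{\dim H^X_n}$, and combining with the previous bound,
\[(\dim F(X))^n\ge \frac{(\dim X)^n}{\sqrt{\dim H^X_n}}.\]

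Finally, I would invoke the growth hypothesis: $H^X_n$ lies inside the $n$-th power (in the algebra $H$) of the finite-dimensional span of matrix coefficients of $X$, which is contained in the $n$-th power of a finite-dimensional generating subspace of $H$; by the slower-than-exponential growth of $H$, this has subexponential dimension, so $(\dim H^X_n)^{1/(2n)}\to 1$. Taking $n$-th roots and letting $n\to\infty$ in the displayed inequality produces $\dim F(X)\ge\dim X$. The point that requires care is the bound $d_i\le\sqrt{\dim H^X_n}$ in the non-cosemisimple setting, which follows because the distinct simple subcoalgebras of $H$ attached to the composition factors of $X^{\ot n}$ sit inside $H^X_n$ as a direct sum via the coradical filtration.
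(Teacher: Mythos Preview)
Your proof is correct and follows essentially the same route as the paper's: both bound the composition length $L_n$ of $X^{\otimes n}$ above by $(\dim F(X))^n$, observe that a simple subquotient of dimension $d$ contributes a $d^2$-dimensional matrix-coefficient coalgebra sitting in the degree-$n$ part of $H$, and combine these with the subexponential growth hypothesis; the paper phrases it as a contradiction (exhibiting a single factor $Y_n$ with $d_n\ge(\dim X/\dim F(X))^n$ forcing exponential growth) whereas you take $n$-th roots and pass to the limit, and your derivation of (ii) from (i) is identical to the paper's.
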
 

\begin{proof} (i) We have ${\rm length}(X^{\otimes n})\le \dim F(X)^n$, so 
there is a simple composition factor $Y_n$ in $X^{\otimes n}$ which has dimension $d_n\ge (\dim X/\dim F(X))^n$. 
The matrix elements of $Y_n$ span a space of dimension $d_n^2$ and are noncommutative polynomials 
of degree $\le n$ of the matrix elements of $X$. So if $\dim X/\dim F(X)>1$, then $H$ has exponential growth. 

(ii) Let $F$ be the usual fiber functor on $H-\comod$. Then $F\circ E$ is another fiber functor, so 
by (i) $\dim E(X)=\dim F(E(X))\ge \dim X$. Also, the same is true for $E^{-1}$. Hence, $\dim E(X)=\dim X$.   
\end{proof} 

\begin{remark}\label{r1} If $H$ has exponential growth then both parts of Proposition \ref{compat} may fail. Indeed, Bichon showed in \cite{Bi1} that for any integer $n\ge 2$ there exists a Hopf algebra 
$H_n$
such that $H_n-\comod=\Rep SL_2(k)$, so that the 2-dimensional irreducible $SL_2(k)$-module corresponds to an $n$-dimensional $H_n$-comodule (namely, $H_2=\O(SL_2(k))$ but $H_n$ has exponential growth for $n\ge 3$). Now the usual fiber functor $F$ of $\Rep SL_2(k)$ on $H_n-\comod$ for $n\ge 3$ gives a counterexample to (i), and the autoequivalence of the category $H_m\otimes H_n-\comod$, $m\ne n$ 
switching the factors gives a counterexample to (ii).  
\end{remark}

\subsection{Basic properties of tensor autoequivalences of $\O_q(G)-\comod$.}

Let $F$ be a tensor autoequivalence of $\O_q(G)-\comod$. 
In this section we prove some basic properties of $F$. 

Recall that  a tensor category $\C$ is said to be {\em tensor-generated} by its object $X$ if every object 
of $\C$ is a subquotient of a direct sum of tensor powers of $X$.

\begin{lemma}\label{actiononrepG} $F$ induces an autoequivalence of 
$\O(G)-\comod=\Rep G$.
\end{lemma}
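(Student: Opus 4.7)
The plan is to characterize $\Rep G\subset \O_q(G)-\comod$ intrinsically in tensor-categorical terms, so that the characterization is preserved by any tensor autoequivalence. Specifically, I would prove that $\Rep G$ equals the full tensor subcategory generated by those simple objects $L$ of $\O_q(G)-\comod$ for which the rigid tensor subcategory $\langle L\rangle$ (closed under tensor products, direct sums, subquotients, and duals) is semisimple. Any tensor autoequivalence sends simple objects to simple objects and preserves semisimplicity of full tensor subcategories (being an equivalence of abelian categories), so this characterization immediately forces $F(\Rep G)=\Rep G$.

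To establish the characterization, recall that the simple objects of $\O_q(G)-\comod$ are the $L_\lambda$ for $\lambda\in P_+$, and those lying in $\Rep G$ are precisely the $L_{\ell\mu}$ for $\mu\in P_+$. If $\lambda=\ell\mu$, then $\langle L_\lambda\rangle\subseteq \Rep G\cong \Rep G$, which is semisimple because $G$ is reductive. Conversely, if $\lambda\in P_+$ is not divisible by $\ell$, then by Lemma~\ref{genera} the Hopf subalgebra of $\O_q(G)$ generated by the matrix coefficients of $L_\lambda$ and $L_\lambda^*$ is exactly $\O_q(G/C_\lambda)$, so $\langle L_\lambda\rangle\simeq \O_q(G/C_\lambda)-\comod$ as tensor categories. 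Via the central exact sequence \eqref{cocleft sequence} for $G/C_\lambda$, the latter contains $\Rep\mathfrak{u}_q(\g)=\mathfrak{u}_q(\g)^*-\comod$ as a full tensor subcategory, and $\Rep\mathfrak{u}_q(\g)$ is not semisimple (the nilpotent generators $e_i,f_i$ of $\mathfrak{u}_q(\g)$ give rise to non-split self-extensions of the unit object, so $\mathfrak{u}_q(\g)$ is a non-semisimple finite-dimensional Hopf algebra). Hence $\langle L_\lambda\rangle$ is not semisimple.

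Combining these observations, the set of simple objects belonging to $\Rep G$ is intrinsically characterized, and since $\Rep G$ is semisimple, every object of $\Rep G$ is a direct sum of such simples. Therefore $F$ permutes the simples $\{L_{\ell\mu}:\mu\in P_+\}$ and, being exact, restricts to a tensor autoequivalence of $\Rep G$. The only delicate point is the non-semisimplicity assertion for $\langle L_\lambda\rangle$ when $\ell\nmid\lambda$; this is standard but needs to be made explicit by pointing to either a concrete non-split extension in $\Rep\mathfrak{u}_q(\g)$ or to the fact that de-equivariantization by the reductive group $G/C_\lambda$ preserves semisimplicity.
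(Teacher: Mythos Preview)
Your approach is essentially the same as the paper's: characterize $\Rep G$ as the subcategory generated by simples whose rigid tensor closure is semisimple, and invoke Lemma~\ref{genera} to identify that closure with $\O_q(G/C_\lambda)-\comod$ when $\ell\nmid\lambda$. One slip to correct: $\Rep\mathfrak{u}_q(\g)$ is \emph{not} a full tensor subcategory of $\O_q(G/C_\lambda)-\comod$; the Hopf algebra map goes $\O_q(G/C_\lambda)\twoheadrightarrow\mathfrak{u}_q(\g)^*$, so $\Rep\mathfrak{u}_q(\g)$ is a \emph{quotient} (the de-equivariantization), not a subcategory. Your own final remark gives the correct fix: since the forgetful functor $\O_q(G/C_\lambda)-\comod\to\Rep\mathfrak{u}_q(\g)$ is exact with an exact right adjoint (the extension being cocleft, the cotensor $\O_q(G/C_\lambda)\,\square_{\mathfrak{u}_q(\g)^*}\,-$ is exact), it preserves projectives and hits every simple, so semisimplicity of $\O_q(G/C_\lambda)-\comod$ would force semisimplicity of $\Rep\mathfrak{u}_q(\g)$, a contradiction. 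Alternatively, one can point directly to a non-semisimple Weyl module $\Delta_{n\lambda}$ for $n$ large inside $\O_q(G/C_\lambda)-\comod$.
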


\begin{proof} By Lemma \ref{genera}, if $Y\in \O_q(G)-\comod$ and $Y\notin \Rep G$ then $Y\oplus Y^*$ tensor-generates a nonsemisimple category. 
Hence, if $X\in \Rep G$ is a simple object then $F(X)\in \Rep G$ (as $F(X)\oplus F(X)^*$ tensor-generates a semisimple category). 
The same holds for $F^{-1}$. This implies the statement. 
\end{proof} 

By the results of \cite{NT2}, the restriction of $F$ to $\Rep G$ belongs to the group \linebreak $\Out G\ltimes H^2(Z_G^\vee,k^\times)$, where $\Out G$ is the group of outer automorphisms of $G$ and $Z_G$ is the center of $G$ (this uses the theorem of McMullen that any automorphism of the Grothendieck semiring of $\Rep G$ comes from an automorphism of $G$). On the other hand, the group  $\Out G\ltimes H^2(Z_G^\vee,k^\times)$ acts naturally on $\O_q(G)-\comod$. So composing $F$ with an element of this group if needed, we may assume that $F|_{\Rep G}\cong {\rm Id}$. 

\begin{remark} If $F$ is braided then another way to prove Lemma \ref{actiononrepG} is to note that $\Rep G$ is the M\"uger 
center of $\O_q(G)-\comod$ (since  ${\mathfrak{u}}_q(\g)$ is a factorizable Hopf algebra,  
$\Rep {\mathfrak{u}}_q(\g)$ is a factorizable braided tensor category and so its M\"uger 
center is trivial), hence must be preserved by $F$. 

Moreover, in this case by the uniqueness of a fiber functor of a Tannakian category \cite{DM}, 
$F|_{\Rep G}$ is given by an outer automorphism of $G$. Thus, by composing $F$ with such an automorphism if needed, we may assume 
that $F|_{\Rep G}\cong {\rm Id}$ (i.e., we do not have to use \cite{NT2}). 
\end{remark}  

\begin{proposition}\label{dimpres}
For any finite dimensional $\O_q(G)$-comodule $V$ we have 
$\dim F(V)=\dim V$. 
\end{proposition}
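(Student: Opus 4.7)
The plan is to reduce this to Proposition \ref{compat}(ii), which asserts that any tensor autoequivalence of $H-\comod$ preserves vector space dimensions, provided $H$ is a finitely generated Hopf algebra of slower than exponential growth. I would apply this with $H=\O_q(G)$.

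The task therefore splits into verifying the two hypotheses on $\O_q(G)$. First, $\O_q(G)$ is finitely generated as an algebra: it is a quantized coordinate ring of a semisimple algebraic group, and is well known to be generated by the matrix coefficients of a finite collection of quantum analogues of the fundamental representations (see e.g.\ \cite{BG,KS}, which are already cited in the paper). Second, I need slower than exponential growth, and in fact one has the stronger statement that $\O_q(G)$ has finite Gelfand--Kirillov dimension equal to $\dim G$, which is a standard fact about quantized function algebras (again, \cite{BG}). Since finite GK dimension certainly implies slower than exponential growth, both hypotheses of Proposition \ref{compat}(ii) are satisfied.

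With these facts in hand, Proposition \ref{compat}(ii) directly yields $\dim F(V)=\dim V$ for every finite dimensional $\O_q(G)$-comodule $V$, since $F$ is assumed to be a tensor autoequivalence of $\O_q(G)-\comod$. I do not anticipate any genuine obstacle here: the only thing that could plausibly require care is a precise citation for the GK-dimension (or at least subexponential growth) of $\O_q(G)$ in the root-of-unity regime, but this is well documented in the literature on quantum groups at roots of unity, where $\O_q(G)$ is a module-finite extension of the central Hopf subalgebra $\O(G)$ via the exact sequence \eqref{cocleft sequence}, and finite over $\O(G)$ forces the same finite GK dimension as $\O(G)$. So the argument is essentially immediate once Proposition \ref{compat} is available.
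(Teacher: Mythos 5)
Your proof is correct and follows exactly the paper's own argument: both invoke Proposition \ref{compat}(ii) and justify the growth hypothesis by observing that $\O_q(G)$ is module-finite over $\O(G)$, hence has GK dimension $\dim G$.
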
 

\begin{proof} This follows from Proposition \ref{compat}(ii), since $\O_q(G)$ has GK dimension $\dim G$ (as it is module-finite over $\mathcal{O}(G)$). 
\end{proof} 

\begin{proposition} \label{pressim} If $F$ is a tensor autoequivalence of $\O_q(G)-\comod$ such that $F|_{\Rep G}={\rm Id}$ then 
$F(L)\cong L$ for each simple object
$L\in \O_q(G)-\comod$. 
\end{proposition}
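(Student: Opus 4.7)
Since $F$ is a tensor autoequivalence, it permutes isomorphism classes of simples; write $F(L_\lambda)\cong L_{\sigma(\lambda)}$ for a bijection $\sigma\colon P_+\to P_+$. The hypothesis yields $\sigma(\ell\mu)=\ell\mu$ for every $\mu\in P_+$. The plan is to reduce to restricted highest weights via Steinberg's tensor product theorem, and then identify restricted simples using Lemma~\ref{genera}, Proposition~\ref{dimpres}, and the $\Rep G$-linearity of $F$.

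Decompose $\lambda=\lambda_0+\ell\lambda_1$ with $\lambda_0$ restricted (i.e.\ $0\le\langle\lambda_0,\alpha_i^\vee\rangle<\ell$ for every simple root $\alpha_i$); Steinberg's tensor product theorem gives $L_\lambda\cong L_{\lambda_0}\otimes L_{\ell\lambda_1}$, so $F(L_\lambda)\cong F(L_{\lambda_0})\otimes L_{\ell\lambda_1}$, which reduces the claim to restricted $\lambda$. Further, writing $F(L_{\lambda_0})=L_\mu$ with Steinberg decomposition $\mu=\mu_0+\ell\mu_1$ and applying $F^{-1}$ to $L_\mu\cong L_{\mu_0}\otimes L_{\ell\mu_1}$, the uniqueness of the Steinberg factorization of the simple $L_{\lambda_0}$ forces $\mu_1=0$; hence $\mu$ is itself restricted.

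For restricted $\lambda_0\ne 0$ (thus not divisible by $\ell$), Lemma~\ref{genera} identifies the Hopf subalgebra of $\O_q(G)$ generated by the matrix elements of $L_{\lambda_0}$ and $L_{\lambda_0}^*$ with $\O_q(G/C_{\lambda_0})$. Applying $F$, which fixes the classical subcoalgebra $\O(G/C_{\lambda_0})$ pointwise, yields $\O_q(G/C_\mu)=\O_q(G/C_{\lambda_0})$ inside $\O_q(G)$, so $C_\mu=C_{\lambda_0}$ and hence $\mu\equiv\lambda_0\pmod Q$; Proposition~\ref{dimpres} also gives $\dim L_\mu=\dim L_{\lambda_0}$. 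To conclude $\mu=\lambda_0$, I would use that $\sigma$ is a $\mathbb{Z}[\ell P]^W$-linear ring automorphism of $K_0(\O_q(G)-\comod)\cong\mathbb{Z}[P]^W$ permuting the basis of simple classes (by the $\Rep G$-linearity of $F$); since by Steinberg this basis is freely indexed over $\mathbb{Z}[\ell P]^W$ by restricted dominant weights, $\sigma$ descends to a permutation of the (finite) set of restricted dominant weights which preserves the fusion multiplicities, dimensions, and the coset modulo $Q$. The main obstacle will be ruling out nontrivial such permutations, which calls for a refined fusion-structure argument beyond the gross invariants of dimension and central character.
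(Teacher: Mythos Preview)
Your reduction via Steinberg's tensor product theorem is sound, but as you yourself note, the argument stalls at the final step: you are left with a permutation $\sigma$ of the restricted alcove preserving dimension, central character, and fusion multiplicities, and you have no mechanism to force $\sigma=\mathrm{id}$. This is a genuine gap. Such ``gross invariants'' arguments are notoriously hard to complete combinatorially; the set of restricted weights can be large and the available invariants need not separate them.

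The paper's proof sidesteps this entirely by working with the Grothendieck ring as a geometric object rather than with individual simples. The character map identifies $\mathrm{Gr}(\O_q(G)\text{-}\comod)\otimes k$ with $k[T/W]$, so $F$ induces an automorphism $F^*$ of the affine variety $T/W$. The hypothesis $F|_{\Rep G}=\mathrm{Id}$ translates to $F^*\circ\phi_\ell=\phi_\ell$, where $\phi_\ell\colon T/W\to T/W$ is the $\ell$-th power map; Proposition~\ref{dimpres} gives $F^*(1)=1$, since dimension is evaluation of the character at the identity. The decisive observation is that $\phi_\ell$ is an \emph{isomorphism of the formal neighborhood of $1$} in $T/W$ (its differential is multiplication by $\ell$, invertible in characteristic zero). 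Hence $F^*$ is the identity on this formal neighborhood, and therefore $F^*=\mathrm{Id}$ on the irreducible variety $T/W$. Linear independence of characters then yields $F(L)\cong L$ for every simple $L$.

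The moral: rather than trying to pin down $\sigma$ on the finite set of restricted weights, use that $F$ is a \emph{ring} automorphism of $k[T/W]$ and exploit the local rigidity of $\phi_\ell$ at the identity. This replaces a delicate combinatorial endgame with a two-line geometric argument.
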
 

\begin{proof}
Let $T\subset G$ be a maximal torus, and $W=N(T)/T$ the Weyl group. 
The character map gives an isomorphism 
${\rm Gr}(\O_q(G)-\comod)\otimes k\cong k[T/W]$ (where Gr stands for the Grothendieck ring). 
Thus, $F$ defines an automorphism $F^*: T/W\to T/W$. 
Moreover, let $\phi_\ell: T/W\to T/W$ be the map defined by raising 
to power $\ell$ on $T$. Then, since $F$ acts trivially on $\Rep G$, we have 
$F^*\circ \phi_\ell=\phi_\ell$. Also $F^*(1)=1$ 
by Proposition \ref{dimpres}. But the map $\phi_\ell$ defines 
an automorphism of the formal neighborhood of $1$ in $T/W$.
Hence, $F^*$ acts trivially on the formal neighborhood of $1$ in $T/W$, and 
hence $F^*={\rm Id}$. Thus, for each $L$ the character of $F(L)$ equals 
the character of $L$. But characters of simple objects
are linearly independent, which implies that 
$F(L)\cong L$ for all simple objects $L$.  
\end{proof} 

\begin{proposition}\label{noanti} 
There are no tensor autoequivalences of $\O_q(G)-\comod$ which reverse the braiding. 
\end{proposition}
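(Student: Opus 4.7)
The plan is to suppose for contradiction that a braiding-reversing tensor autoequivalence $F$ of $\O_q(G)-\comod$ exists, to reduce to the case where $F$ fixes every simple object up to isomorphism, and then to derive a contradiction from the spectrum of the double braiding coming from the universal $R$-matrix of $\mathfrak{u}_q(\g)$.

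The reduction proceeds exactly as in the braiding-preserving case: the proofs of Lemma~\ref{actiononrepG} and of Propositions~\ref{dimpres} and \ref{pressim} make no use of compatibility of $F$ with the braiding, so they apply verbatim to the present $F$. Together with the Neshveyev--Tuset description \cite{NT2} of tensor autoequivalences of $\Rep G$ as sitting in $\Out G \ltimes H^2(Z_G^\vee, k^\times)$---whose elements lift to \emph{braided} tensor autoequivalences of $\O_q(G)-\comod$---this lets us compose $F$ with a suitable such lift to arrange $F|_{\Rep G} \cong {\rm Id}$. Proposition~\ref{pressim} then forces $F(L_\lambda) \cong L_\lambda$ for every dominant weight $\lambda$.

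The key step is to exploit the double braiding. For simple objects $L_\mu, L_\nu$ and any multiplicity-one simple summand $L_\lambda \subseteq L_\mu \otimes L_\nu$ (for instance the highest-weight component $L_{\mu + \nu}$), the double braiding $c_{L_\nu, L_\mu} \circ c_{L_\mu, L_\nu}$ commutes with the $\O_q(G)$-action, so by Schur's lemma it acts on $L_\lambda$ as multiplication by some scalar $\chi_\lambda \in k^\times$. Being $k$-linear, $F$ carries this scalar morphism to the same scalar; being braiding-reversing, $F$ carries the double braiding to its inverse. After identifying $F(L_\lambda)$ with $L_\lambda$, we conclude $\chi_\lambda = \chi_\lambda^{-1}$, i.e.\ $\chi_\lambda^2 = 1$, for every admissible triple $(\mu,\nu,\lambda)$.

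Finally, the ribbon element of $\mathfrak{u}_q(\g)$ gives the explicit formula
\[
\chi_\lambda \;=\; q^{(\lambda,\,\lambda + 2\rho) \,-\, (\mu,\,\mu + 2\rho) \,-\, (\nu,\,\nu + 2\rho)},
\]
so specializing $\mu = \nu = \omega$ to a fundamental weight and $\lambda = 2\omega$ yields $\chi_{2\omega} = q^{2(\omega,\omega)}$; the condition $\chi^2 = 1$ then reads $q^{4(\omega, \omega)} = 1$. The standing hypotheses on $\ell = \mathrm{ord}(q)$ (odd, coprime to the determinant of the Cartan matrix, and coprime to $3$ in type $G_2$) rule this out for a suitable choice of fundamental weight $\omega$ in every simple Lie type. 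The hard part is precisely this final arithmetic step: the reductions and the monodromy identity are uniform and clean, but verifying that the monodromy spectrum of $\O_q(G)-\comod$ is not contained in $\{\pm 1\}$ requires a brief case-by-case check against the admissible values of $\ell$.
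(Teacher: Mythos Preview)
Your approach is essentially the paper's: reduce to $F(L_\lambda)\cong L_\lambda$ for all $\lambda$, then exploit that a braiding-reversing equivalence inverts certain canonical scalars. Two points of comparison are worth noting.

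First, a small slip in the reduction. You claim that the elements of $H^2(Z_G^\vee,k^\times)$ lift to \emph{braided} autoequivalences of $\O_q(G)-\comod$; they do not in general (a cocycle $\psi$ gives a braided tensor structure on ${\rm Id}$ only when $\psi$ is symmetric, i.e., cohomologically trivial), and composing $F$ with a non-braided autoequivalence would destroy the braiding-reversing property. The fix is immediate and actually simplifies matters: since $\Rep G$ is symmetric, a braiding-reversing $F$ restricts to a \emph{symmetric} autoequivalence of $\Rep G$, which by Tannakian uniqueness lies in $\Out G$; these do lift to braided autoequivalences, so the $H^2$ part is never needed here.

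Second, the endgame. The paper uses the Drinfeld central element (double twist) rather than the monodromy: this acts on each $L_\lambda$ by the scalar $q^{2(\lambda,\lambda+2\rho)}$, and braiding-reversal forces it to equal its own inverse. Since this holds for \emph{every} $\lambda\in P_+$, one polarizes---subtracting the relations for two weights---to obtain $(\nu,\beta)\equiv 0\pmod\ell$ for all $\nu,\beta\in P$, an instant contradiction (take $\nu=\beta$ a root). This is cleaner than your specialization to $(\omega,\omega,2\omega)$ followed by a type-by-type check, and it sidesteps the normalization issues with fractional values of $(\omega,\omega)$ for fundamental weights. In fact your ``hard part'' dissolves entirely once you run the same argument uniformly over all weights: since the monodromy eigenvalue is $\theta_\lambda\theta_\mu^{-1}\theta_\nu^{-1}$, knowing that all monodromy scalars square to $1$ is equivalent to knowing that all double-twist scalars do. (Also, at a root of unity ``multiplicity-one simple summand'' should be ``multiplicity-one composition factor'', since $L_\omega\otimes L_\omega$ need not be semisimple; your Schur-lemma argument still goes through with that correction.)
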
 

\begin{proof} Suppose for the sake of contradiction that $F$ is a braiding-reversing autoequivalence. 
Then $F$ preserves the M\"uger center $\Rep G$, and we may assume without loss of generality that $F|_{\Rep G}\cong {\rm Id}$. 
Hence, by Proposition \ref{pressim}, $F(L)\cong L$ for all simple objects~$L$. 

Let $P$ be the weight lattice of $G$ and $P_+\subset P$ be the set of dominant integral weights. 
The eigenvalue of the Drinfeld central element $z$ (the double twist) on the simple comodule $L_\lambda$ of highest weight $\lambda\in P_+$ is 
$q^{2(\lambda,\lambda+2\rho)}$. Since $F(L_\lambda)=L_\lambda$ and 
$F$ reverses braiding, this eigenvalue must equal its reciprocal, 
so we must have $(\lambda,\lambda+2\rho)=0$ in $\mathbb{Z}/\ell$ for all $\lambda\in P_+$.
Subtracting these conditions for two weights $\lambda,\mu$, we get $(\lambda-\mu,\lambda+\mu+2\rho)=0$ in $\mathbb{Z}/\ell$. 
Thus, $(\nu,\beta)=0$ in $\mathbb{Z}/\ell$ for all $\nu,\beta\in P$, which is a contradiction.      
\end{proof}

\section{Tensor autoequivalences of tensor categories} \label{tetc}

\subsection{Tensor autoequivalences of a finite tensor category} 

One of the goals of this section is to put algebraic structure on the groups $\Aut(\C)$ and their subgroups. 
We start with the following proposition. 

\begin{proposition}\label{autproalg} Let $\C$ be a finite tensor category over $k$. 
Then $\Aut(\C)$ has a natural structure of an affine algebraic group over $k$.  
Moreover, if $\C$ is braided then so does $\Aut^{\rm br}(\C)$. 
\end{proposition}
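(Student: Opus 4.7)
The plan is to realize $\Aut(\C)$ as an affine algebraic group over $k$ by parameterizing tensor autoequivalences by a finite amount of linear-algebraic data satisfying polynomial constraints, with composition and inversion realized by polynomial maps. Since $\C$ is finite, it has a projective generator $P$ (for instance, the direct sum of the projective covers of the simples), which is automatically a tensor generator; we set $A := \End_\C(P)^{\op}$, so that $\C \simeq A\text{-}\modd$ as $k$-linear abelian categories. Any $F \in \Aut(\C)$ induces a permutation of the finite set of isomorphism classes of simples, yielding a homomorphism to a finite symmetric group; it thus suffices to put an algebraic structure on the normal subgroup $\Aut^0(\C)$ consisting of $F$ that fix each simple iso class, since $\Aut(\C)$ decomposes into finitely many cosets over this finite image.

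For $F \in \Aut^0(\C)$ one has $F(P) \cong P$; choosing such an isomorphism, $F$ is encoded (on the abelian level) by an algebra automorphism $\varphi_F \colon A \to A$, while its tensor structure is encoded by an invertible element $J_{P,P} \in \End_\C(P \ot P)$, with the remaining constraints $J_{X,Y}$ determined by naturality. The tensor functor axioms (pentagon, triangle, naturality, invertibility of $F$) become polynomial equations in $(\varphi_F, J_{P,P})$, cutting out a Zariski-closed subvariety of the affine variety $\Aut_{\mathrm{alg}}(A) \times \End_\C(P \ot P)^\times$. The gauge freedom in the choice of $F(P) \cong P$ is an algebraic action of $A^\times$; taking the geometric quotient yields $\Aut^0(\C)$ as an affine variety, and composition and inversion of tensor autoequivalences are polynomial in the gauge-invariant parameters, so $\Aut^0(\C)$ (and hence $\Aut(\C)$) is an affine algebraic group.

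In the braided case, the condition $J_{Y,X} \circ c_{F(X), F(Y)} = F(c_{X,Y}) \circ J_{X,Y}$ is a further polynomial constraint (imposed on $(\varphi_F, J_{P,P})$ and extended by naturality), cutting out $\Aut^{\rm br}(\C)$ as a Zariski-closed subgroup of $\Aut(\C)$. The main technical obstacle will be handling the quotient by the $A^\times$-gauge action so that the result is a genuine affine variety, and so that the induced group structure is algebraic. We expect to deal with this by rigidifying the choice $F(P) \cong P$, reducing the gauge freedom to an algebraic group acting algebraically on an affine variety; affineness of everything in sight, together with reductivity of $A^\times$, ensures that a good geometric quotient exists and retains the algebraic group structure coming from composition of tensor autoequivalences.
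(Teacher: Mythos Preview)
Your overall strategy matches the paper's: encode a tensor autoequivalence by an algebra automorphism of $A=\End_\C(P)^{\op}$ together with a bimodule-level tensor structure, cut out the constraints by polynomial equations, and then quotient by the $A^\times$-gauge. Two points deserve correction.

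First, the detour through $\Aut^0(\C)$ is unnecessary. Since $P$ is the direct sum of one copy of each indecomposable projective and any abelian autoequivalence permutes these, one has $F(P)\cong P$ for \emph{every} $F\in\Aut(\C)$, not just those fixing the simples. So you can parameterize all of $\Aut(\C)$ directly, exactly as the paper does, without first passing to a finite-index subgroup (and note that ``finite index plus algebraic subgroup'' does not by itself produce an algebraic structure on the ambient group).

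Second, and more seriously, your plan for the quotient has a genuine gap: $A^\times$ is \emph{not} reductive in general. For a finite-dimensional non-semisimple algebra $A$ (which is the typical situation for a non-semisimple finite tensor category), $A^\times$ contains a nontrivial unipotent radical; already for $A=k[x]/(x^2)$ one gets $A^\times\cong \mathbb{G}_m\times\mathbb{G}_a$. So invoking GIT-style affine quotients via reductivity does not work. The paper avoids this entirely by observing that the set of pairs $(\alpha,J)$ is itself an affine algebraic \emph{group} $G_1$, that the gauge map $\phi:A^\times\to G_1$ is a homomorphism of algebraic groups, and that $\phi(A^\times)$ is precisely the set of pairs giving a tensor functor isomorphic to the identity---hence a closed \emph{normal} subgroup of $G_1$. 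Then $\Aut(\C)=G_1/\phi(A^\times)$ is a quotient of an affine algebraic group by a closed normal subgroup, which is always affine algebraic; no reductivity is needed. Once you replace your GIT step with this group-quotient argument, your proof goes through and coincides with the paper's.
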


\begin{proof} The idea of the proof is to express categorical data 
(tensor functors) entirely in terms of linear-algebraic data (linear maps, i.e., eventually, matrices). 

Let $P$ be the direct sum of the indecomposable projectives of $\C$, and 
 $A:=(\End P)^{\rm op}$. Then we have a natural identification $\C\cong \Rep A$ as abelian categories, given by $Y\mapsto \Hom(P,Y)$. 
Under this identification, the tensor product functor $\otimes: \C\boxtimes \C\to \C$ is given by tensoring over $A^{\otimes 2}$ 
with an $(A,A^{\otimes 2})$-bimodule $T$, and the associativity isomorphism is represented by an isomorphism of $(A,A^{\otimes 3})$-bimodules 
$\Phi: T\otimes_{A^{\otimes 2}} (T\otimes A)\cong T\otimes_{A^{\otimes 2}} (A\otimes T)$
satisfying the pentagon relation. Any tensor autoequivalence $F:\C\to \C$ can then be defined 
by an algebra automorphism $\alpha: A\to A$ together with a bimodule isomorphism 
$J: T\cong T^\alpha$ which preserves $\Phi$. It is clear that pairs $(\alpha,J)$ 
form an affine algebraic group under the obvious composition. Denote this group by $G_1$. 
Also, let $G_2:=A^\times$, also an affine algebraic group. Then we have a homomorphism of algebraic groups $\phi: G_2\to G_1$ 
given by $\phi(a)=({\rm Ad}a,J_a)$, where $J_a(t)=at(a^{-1}\otimes a^{-1})$, $t\in T$.  
It is clear that a tensor functor $F$ determined by $(\alpha,J)$ is isomorphic to the identity if and only if $(\alpha,J)=\phi(a)$ for some $a\in A^\times$.
Thus, $\phi(G_2)$ is normal in $G_1$, and $\Aut(\C)=G_1/\phi(G_2)$ is an affine algebraic group, as claimed.  
\end{proof} 

\begin{remark} A different (but similar) proof of Proposition \ref{autproalg} may be obtained by using \cite[Proposition 2.7]{EO} which states that any finite tensor category $\C$ is the representation category of a finite dimensional weak quasi-Hopf algebra $H$, and representing tensor autoequivalences of $\C$ linear-algebraically as twisted automorphisms of $H$ (as in \cite{Da}). 
\end{remark} 

\subsection{Tensor autoequivalences of a tensor category generated by one object} 

Now let $\C$ be a tensor category which is not necessarily finite. Then 
in general $\Aut(\C)$ is not an algebraic or even a proalgebraic group. For instance, 
if $\C=\Vec_{\mathbb{Z}^2}$ then $\Aut(\C)=GL_2(\mathbb{Z})\ltimes k^\times$.  
Thus, to obtain a proalgebraic group, we need to put some restrictions on the  
tensor autoequivalences.  

Given $X\in \C$, let $\Aut_X(\C)$ be the subgroup of elements $F\in \Aut(\C)$ such that $F(X)\cong X$. 
The following proposition is a generalization of Proposition \ref{autproalg} to the infinite case. 

\begin{proposition}\label{genX} 
Suppose that $\C$ is tensor-generated by $X$.  Then $\Aut_X(\C)$ has a natural structure of an affine proalgebraic group.  Moreover, if $\C$ is braided then so does $\Aut_X^{\rm br}(\C)$.
\end{proposition}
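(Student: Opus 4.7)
The plan is to realize $\Aut_X(\C)$ as the inverse limit of a sequence of affine algebraic groups, obtained by filtering $\C$ by finite abelian subcategories and applying the linearization idea of Proposition \ref{autproalg} level by level.

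Since $\C$ is tensor-generated by $X$ and objects in a tensor category over $k$ have finite length, I would filter $\C$ by the full abelian subcategories $\C_n\subset\C$ consisting of subquotients of finite direct sums of $X^{\ot i}$ with $i\le n$. Each $\C_n$ has only finitely many isomorphism classes of simple objects (being built from subquotients of finitely many finite-length objects), and $\C=\bigcup_n\C_n$. The tensor product of $\C$ restricts to bifunctors $\ot:\C_n\times\C_m\to\C_{n+m}$, with the associator inherited from $\C$. Any $F\in\Aut_X(\C)$ preserves this filtration, since $F(X)\cong X$ forces $F(X^{\ot i})\cong X^{\ot i}$ and $F$ carries subquotients to subquotients.

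Next, following the linear-algebraic strategy of Proposition \ref{autproalg}, at each level $n$ I would realize $\C_n$ as (a full subcategory of modules over) a finite-dimensional algebra $A_n$, obtained intrinsically from $\C_n$ via its coendomorphism coalgebra. The tensor product $\C_i\times\C_j\to\C_{i+j}$ is then encoded by an $(A_{i+j},A_i\ot A_j)$-bimodule $T_{i,j}$, and the associator by $(A_{i+j+k},A_i\ot A_j\ot A_k)$-bimodule isomorphisms $\Phi_{i,j,k}$ satisfying the pentagon relation, for all $i+j+k\le n$. A tensor autoequivalence of $\C$ restricted to level $n$ is encoded by a family of algebra automorphisms $\alpha_m:A_m\to A_m$ ($m\le n$) together with bimodule isomorphisms $J_{i,j}:T_{i,j}\cong T_{i,j}^{\alpha}$ respecting the $\Phi_{i,j,k}$; these data form an affine algebraic group $G_1^{(n)}$. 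The collection of data inducing the identity tensor autoequivalence is the image of an algebraic homomorphism from the algebraic group $\prod_{m\le n}A_m^\times$, and is closed and normal. Thus $\Aut_X^{(n)}(\C):=G_1^{(n)}/\phi_n\bigl(\prod_{m\le n}A_m^\times\bigr)$ is an affine algebraic group, and the restriction maps $\Aut_X^{(n+1)}(\C)\to\Aut_X^{(n)}(\C)$ are morphisms of algebraic groups whose inverse limit is $\Aut_X(\C)$, realizing it as affine pro-algebraic. The braided case follows by intersecting with the closed condition of compatibility with the braiding at each level, yielding a closed pro-algebraic subgroup $\Aut_X^{\rm br}(\C)$.

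The main obstacle is the linearization step: although each $\C_n$ has finitely many simples, it need not inherit enough projectives from $\C$ and need not be closed under extensions, so the "projective generator" construction of Proposition \ref{autproalg} must be replaced by an intrinsic description of $\C_n$ as a locally finite $k$-linear category via its representing finite-dimensional coalgebra. A secondary burden is bookkeeping: one must carefully track the compatibility of the bimodules $T_{i,j}$ and associators $\Phi_{i,j,k}$ across levels and verify that restriction defines an inverse system of algebraic groups whose projective limit genuinely coincides with $\Aut_X(\C)$ rather than a larger object. Once these technicalities are in place the argument reduces to that of Proposition \ref{autproalg}.
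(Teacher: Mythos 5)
Your proposal follows essentially the same route as the paper: filter $\C$ by the subcategories $\C_N$ of subquotients of $(X\oplus\be)^{\otimes N}$ (equivalent to your subquotients of sums of $X^{\otimes i}$, $i\le N$, since $(X\oplus\be)^{\otimes N}\cong\bigoplus_i\binom{N}{i}X^{\otimes i}$), linearize each $\C_N$ and the partial tensor products $\C_m\times\C_p\to\C_{m+p}$ via algebras $A_N$ and bimodules $T_{mp}$ with associators $\Phi_{mps}$, present $\Aut_X^N(\C)$ as the quotient of the algebraic group of pairs $(\alpha,J)$ by the image of $A_N^\times$, and recover $\Aut_X(\C)$ as the inverse limit of the resulting affine algebraic groups, with the braided case obtained by intersecting with a closed compatibility condition. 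Your concern about projectives is handled in the paper by the (asserted) fact that each $\C_N$ is a finite abelian category, so it has a projective generator $P_N$ and $\C_N\cong\Rep A_N$ with $A_N=(\End P_N)^{\op}$; your proposed coendomorphism-coalgebra presentation is precisely the dual way of establishing this finiteness, so the two variants amount to the same thing.
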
 
\begin{proof} 
The proof is analogous to the proof of Proposition \ref{autproalg}, with additional technical details 
to deal with the fact that $\C$ may not be finite.  

Namely, $\C$ has an exhausting increasing filtration $\C=\cup_{N\ge 0}\C_N$, where $\C_N$ is the full subcategory whose objects are subquotients of $(X\oplus \bold 1)^{\otimes N}$. Note that $\C_N$ are finite, and we have compatible tensor product functors 
$\otimes: \C_m\times \C_p\to \C_{m+p}$. Also, if $F: \C\to \C$ is a tensor autoequivalence such that $F(X)\cong X$ then 
$F$ preserves this filtration and these tensor products, and conversely, any autoequivalence $F: \C\to \C$ with these properties 
is a tensor autoequivalence such that $F(X)\cong X$ (indeed, $F(X\oplus \bold 1)\cong X\oplus \bold 1$ implies $F(X)\cong X$). 
Here by ``$F$ preserves the tensor products" we mean that $F$ is equipped with an appropriate tensor structure. 

Let $\Aut_X^N(\C)$ be the group of isomorphism classes of autoequivalences $\C_N\to \C_N$ preserving the filtration and the tensor  products 
$\otimes: \C_m\times \C_p\to \C_{m+p}$ for $m+p\le N$. Then the groups $\Aut_X^N(\C)$ form an inverse system (i.e., we have natural homomorphisms $\psi_N: \Aut_X^{N+1}(\C)\to \Aut_X^N(\C)$), and 
 $\Aut_X(\C)=\underleftarrow{\lim}_{N\to \infty}\Aut_X^N(\C)$. Thus, it suffices to show that $\Aut_X^N(\C)$ are affine algebraic groups
 and $\psi_N$ are homomorphisms of algebraic groups. 
 
 Let $P_N$ be the direct sum of the indecomposable projectives of $\C_N$. Then we have surjections 
 $\eta_N: P_N\to P_{N-1}$, such that ${\rm Ker} \eta_N$ is the intersection of kernels of all morphisms 
 from $P_N$ to objects of $\C_{N-1}$. Let $A_N:=(\End P_N)^{\rm op}$. Then any $a\in A_N$ preserves ${\rm Ker}\eta_N$ and thus descends to $P_{N-1}$, which defines morphisms $\xi_N: A_N\to A_{N-1}$. For $Y\in \C_{N-1}$, the map $\eta_N^*: \Hom(P_{N-1},Y)\to \Hom(P_N,Y)$ 
 is an isomorphism, which implies that  $\xi_N$ are surjective. Moreover, we have natural identifications $\C_N\cong \Rep A_N$ as abelian categories, given by $Y\mapsto \Hom(P_N,Y)$, and the inclusions $\C_{N-1}\hookrightarrow \C_N$ correspond to the surjections $\xi_N$. Under this identification, the tensor product functors $\otimes: \C_m\boxtimes \C_p\to \C_{m+p}$ are given by tensoring over $A_m\otimes A_p$ with an $(A_{m+p},A_m\otimes A_p)$-bimodule $T_{mp}$, 
 and the associativity isomorphism is represented by an isomorphism of $(A_{m+p+s},A_m\otimes A_p\otimes A_s)$-bimodules 
$$
\Phi_{mps}: T_{m+p,s}\otimes_{A_{m+p}\otimes A_s} (T_{mp}\otimes A_s)\cong T_{m,p+s}\otimes_{A_m\otimes A_{p+s}} (A_m\otimes T_{ps})
$$
satisfying the pentagon relation. Moreover, the bimodules $T_{mp}$ 
are equipped with natural identifications $\gamma_{m,p}^+: T_{mp}\otimes_{A_m} A_{m-1}\cong \xi_{m+p}^*T_{m-1,p}$,
$\gamma_{m,p}^-:T_{m,p-1}\otimes_{A_p} A_{p-1}\cong \xi_{m+p}^*T_{m,p-1}$, coming from restricting the tensor product 
$\otimes: \C_m\boxtimes \C_p\to \C_{m+p}$ to $\C_{m-1}\otimes \C_p$ and $\C_m\otimes \C_{p-1}$. 

Any autoequivalence $F:\C_N\to \C_N$ preserving $X$ and the tensor product functors can then be defined 
by a collection of algebra automorphisms $\alpha_m: A_m\to A_m$, $m\le N$ compatible with $\lbrace \xi_i\rbrace$ 
and a collection bimodule isomorphisms $J_{mp}: T_{mp}\cong T_{mp}^\alpha$, $m+p\le N$ preserving $\lbrace \Phi_{mps}\rbrace$ and compatible with $\lbrace\gamma_{mp}^\pm\rbrace$. It is clear that pairs $(\alpha,J)$ 
form an affine algebraic group under the obvious composition. Denote this group by $G_1$. 
Also, let $G_2:=A_N^\times$, also an affine algebraic group. Then we have a homomorphism of algebraic groups $\phi: G_2\to G_1$ 
given by $\phi(a)=({\rm Ad}a,J_a)$, where $J_a(t)=at(a^{-1}\otimes a^{-1})$, $t\in T_{mp}$.  
It is clear that a tensor functor $F$ determined by $(\alpha,J)$ is isomorphic to the identity if and only if $(\alpha,J)=\phi(a)$ for some 
$a\in A_N^\times$. Thus, $\phi(G_2)$ is normal in $G_1$, $\Aut_X^N(\C)=G_1/\phi(G_2)$ is an affine algebraic group, and $\psi_N$ is a morphism of algebraic groups, as claimed. 
\end{proof} 

\subsection{Tensor autoequivalences of the category of comodules over a Hopf algebra} 
Let us describe the affine proalgebraic group $\Aut_X(\C)$ more explicitly in the case when $\C=H-\comod$, where $H$ 
is a Hopf algebra over $k$. The condition that $\C$ is tensor-generated by $X$ means that $H$ is generated as an algebra by 
the finite dimensional subcoalgebra $C_X\subset H$ spanned by the matrix elements of $X$. 

\begin{definition} A {\it co-twisted automorphism} of $H$ is a pair $(\alpha,J)$, where $\alpha: H\to H$ is a coalgebra automorphism,
and $J\in (H\otimes H)^*$ is a Hopf 2-cocycle such that $\alpha(x*y)=\alpha(x)*_J\alpha(y)$, where $*_J$ is the product twisted by $J$.   
\end{definition} 

\begin{remark} This notion is dual to the notion of a twisted automorphism in \cite{Da}, which explains the terminology. 
\end{remark} 

Clearly, co-twisted automorphisms form a group under the obvious composition operation. Let us denote this group by $\TAut(H)$. 
We have a  natural homomorphism $\zeta: \TAut(H)\to \Aut(H-\comod)$ given by 
$\zeta(\alpha,J)=(F,J)$, where $F$ is defined by $\alpha$. 

Assume from now on that $H$ has slower than exponential growth. 

\begin{proposition}\label{ima} $\zeta$ is surjective.  
\end{proposition}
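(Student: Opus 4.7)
Given a tensor autoequivalence $F$ of $H-\comod$, the plan is first to bring $F$ into a normal form that acts trivially on underlying vector spaces, and then to read off the co-twisted automorphism. Let $\omega : H-\comod \to \Vec$ denote the standard forgetful fiber functor and set $\omega' := \omega \circ F$, which is again a tensor fiber functor. Applying Proposition \ref{compat}(ii) to both $F$ and $F^{-1}$ yields $\dim \omega'(V) = \dim V$ for every comodule $V$. The crux of the argument is to produce a natural isomorphism $\phi : \omega' \xrightarrow{\sim} \omega$ of plain $k$-linear functors. Granting this, replace $F$ by the isomorphic tensor autoequivalence $\widetilde F$ with $\omega \circ \widetilde F = \omega$ on the nose, obtained by transporting the coaction on each $F(V)$ to $V$ via $\phi_V$; then $\widetilde F(V) = V$ as a vector space for every $V$, and only the coaction is modified.

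To read off $(\alpha, J)$, apply $\widetilde F$ to each finite-dimensional subcoalgebra $C \subset H$ viewed as a comodule via $\Delta|_C$. The resulting new coaction $\rho'_C : C \to C \otimes H$ determines a $k$-linear map $\alpha|_C := (\varepsilon \otimes \id_H) \circ \rho'_C : C \to H$, and naturality of $\widetilde F$ with respect to the inclusions $C \hookrightarrow C'$ glues these into a single $k$-linear map $\alpha : H \to H$ satisfying $\rho^{\widetilde F(V)} = (\id_V \otimes \alpha) \circ \rho_V$ for all $V$. The comodule axioms force $\alpha$ to be a coalgebra morphism, and invertibility of $\widetilde F$ makes it a coalgebra automorphism. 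Analogously, the monoidal coherence isomorphisms $\widetilde F(V) \otimes \widetilde F(W) \to \widetilde F(V \otimes W)$ are, at the level of underlying vector spaces, natural automorphisms of $V \otimes W$, and the same universality argument produces a single element $J \in (H \otimes H)^*$ realizing them. The pentagon axiom becomes the Hopf 2-cocycle equation for $J$, and compatibility of the coherence isomorphisms with the new coactions becomes the identity $\alpha(xy) = \alpha(x) *_J \alpha(y)$. Hence $(\alpha, J) \in \TAut(H)$ and $\zeta(\alpha, J) \cong F$ by construction.

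The main obstacle is the key step $\omega' \cong \omega$ as plain functors. Remark \ref{r1} and Bichon's examples \cite{Bi1} show that without the subexponential-growth hypothesis, fiber functors of matching dimensions need not be isomorphic as plain functors, so this step is where the hypothesis on $H$ is essentially used. I would establish it through Ulbrich--Schauenburg biGalois theory: the pair $(\omega, \omega')$ determines a right $H$-Galois object $L$, and $\phi$ exists if and only if $L$ is cleft, i.e., $L \cong H$ as a right $H$-comodule. The induced tensor equivalence $H-\comod \simeq H'-\comod$, with $H' = \operatorname{coend}(\omega')$, matches vector-space dimensions of objects, so $H'$ inherits slower-than-exponential growth from $H$. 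A growth-comparison argument then shows that $L$, as a right $H$-comodule, has the same dimension filtration as $H$; faithful flatness of $L$ over the base promotes this equality to an honest isomorphism $L \cong H$ of right $H$-comodules. This cleftness step is the principal technical point of the proof.
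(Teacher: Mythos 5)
Your overall plan is the right one and matches the paper's: use Proposition~\ref{compat}(ii) to show $F$ preserves dimensions, produce a natural isomorphism $\omega\circ F\cong\omega$ of plain $k$-linear functors, and then read off the coalgebra automorphism $\alpha$ (transport of coaction) and the cocycle $J$ (tensor structure). The reading-off part in your second paragraph is correct and is essentially what the paper does.

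The gap is in your key step. The paper establishes $\omega\circ F\cong\omega$ by an elementary structural argument which you bypass: write $H=\bigoplus_V V_{\rm space}^*\otimes I(V)$ as a left $H$-comodule (sum over simples $V$, with $I(V)$ the injective hull). Since $F$ is an autoequivalence it permutes simples and injective hulls ($F(I(V))\cong I(F(V))$), and since it preserves dimensions ($\dim F(V)=\dim V$) this decomposition is preserved, giving $F(H)\cong H$ in the ind-completion. From there $\omega\circ F\cong\omega$ follows at once from the cofree adjunction $\omega(X)\cong\Hom_{H}(X,H)^*$. You instead invoke biGalois theory and attempt to show the Galois object $L$ attached to $(\omega,\omega')$ is cleft via a ``growth-comparison argument.'' As written this step is not substantiated and I don't believe it goes through: knowing that $L$ and $H$ have ``the same dimension filtration'' does not yield an isomorphism $L\cong H$ of right $H$-comodules (comodules with matching filtration dimensions need not be isomorphic), and ``faithful flatness of $L$ over the base'' is automatic over a field $k$ and carries no content here. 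If you want to keep the Galois-object language, the correct way to see cleftness is precisely via $F(H)\cong H$, i.e., you still need the injective-cogenerator decomposition, at which point the biGalois machinery becomes unnecessary. So: same high-level strategy, but your route to the crucial isomorphism $\omega\circ F\cong\omega$ is both heavier and, in its present form, incomplete.
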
 

\begin{proof} Let $(F,J)\in \Aut(H-\comod)$. 
Recall that $H=\oplus_V V_{\rm space}^*\otimes I(V)$, where $V$ runs over simple $H$-comodules and $I(V)$ is the injective hull of $V$
(here the subscript ``space'' indicates that we are considering $V^*$ just as a vector space, without any actions). 
 By Proposition \ref{compat}, $(F,J)$ preserves vector space dimensions of $H$-comodules.
Hence, $F(H)\cong H$, i.e., $F$ is induced by a coalgebra automorphism $\alpha$ of $H$. 
Further, since $(F,J)$ is a tensor equivalence, $J$ gives rise to a Hopf 2-cocycle on $H$ (which we denote by the same letter) such that $\alpha$ preserves the product up to twisting by $J$. Then $(\alpha,J)$ is a co-twisted automorphism of $H$  and $\zeta(\alpha,J)=(F,J)$. 
\end{proof} 

Let us now describe the kernel of $\zeta$. By definition, it consists of pairs $(\alpha,J)$ where $\alpha$ 
is the inner automorphism corresponding to an invertible element $a\in H^*$ and $J=da$ is the coboundary of $a$. 
Moreover, the pair $(\alpha,J)$ corresponding to  $a$ equals $(1,1)$ if and only if $a$ is a central grouplike element of $H^*$. 
Thus, ${\rm Ker}\zeta=(H^*)^\times/Z_H$, where $(H^*)^\times$ is the group of invertible elements of $H^*$ 
and $Z_H$ is the group of central grouplike elements of $H^*$. Therefore, we get 

\begin{corollary}\label{iso1} There is an isomorphism of groups
$$
\overline{\zeta}: \TAut(H)/((H^*)^\times/Z_H)\cong \Aut(H-\comod).
$$
\end{corollary}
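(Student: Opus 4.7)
The plan is to deduce the corollary directly from Proposition \ref{ima} together with the explicit description of $\Ker \zeta$ that immediately precedes the statement, by invoking the First Isomorphism Theorem for groups. Since $\zeta : \TAut(H) \to \Aut(H-\comod)$ is already known to be a group homomorphism, the only substantive content is (a) confirming its surjectivity and (b) identifying its kernel with $(H^*)^\times/Z_H$.

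First I would cite Proposition \ref{ima} to get surjectivity of $\zeta$. Next, I would analyze the kernel: a pair $(\alpha,J)$ lies in $\Ker\zeta$ precisely when the induced tensor autoequivalence of $H-\comod$ is isomorphic (as a tensor functor) to the identity. Unpacking what this means on the level of the coalgebra automorphism $\alpha$ and the Hopf $2$-cocycle $J$, one finds that $\alpha$ must be inner in $H^*$, i.e., $\alpha = \mathrm{Ad}(a)$ for some invertible $a\in H^*$, and $J$ must be the coboundary $da$ of $a$ (so that the twist is trivialized by conjugation). Conversely, any such pair $(\mathrm{Ad}(a), da)$ lies in the kernel, so the map $a\mapsto (\mathrm{Ad}(a), da)$ from $(H^*)^\times$ surjects onto $\Ker\zeta$.

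Then I would determine when this pair equals the identity element $(1,1)$ of $\TAut(H)$: the condition $\mathrm{Ad}(a)=\id$ forces $a$ to be central in $H^*$, and the condition $da=1$ forces $a$ to be grouplike. Thus $a\mapsto (\mathrm{Ad}(a),da)$ has kernel exactly $Z_H$, the group of central grouplike elements of $H^*$, giving the isomorphism $\Ker\zeta \cong (H^*)^\times/Z_H$. Finally, the First Isomorphism Theorem produces the desired group isomorphism
$$
\overline{\zeta}: \TAut(H)/((H^*)^\times/Z_H)\xrightarrow{\ \sim\ } \Aut(H-\comod).
$$

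There is no serious obstacle: the only mild technical point is verifying that the pair $(\mathrm{Ad}(a),da)$ is indeed a well-defined co-twisted automorphism (which is routine since inner automorphisms preserve the coproduct and conjugation tautologically makes the twisted product equal to the original one) and that it depends on $a$ only through its class in $(H^*)^\times/Z_H$; both are straightforward from the definition of $\TAut(H)$. Surjectivity onto the kernel is the observation that a tensor isomorphism $F\cong \mathrm{Id}$ is exactly the datum of an invertible element $a\in H^*$ trivializing both $\alpha$ and $J$ simultaneously.
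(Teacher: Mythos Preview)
Your proposal is correct and follows essentially the same approach as the paper: surjectivity of $\zeta$ from Proposition~\ref{ima}, the identification of $\Ker\zeta$ with $(H^*)^\times/Z_H$ via the assignment $a\mapsto(\alpha_a,da)$, and then the First Isomorphism Theorem. In fact the paper does not give a separate proof of the corollary at all---it simply records the kernel computation in the preceding paragraph and states the result---so your write-up is a slightly more detailed version of exactly the argument the authors have in mind.
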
 

Let $\TAut_X(H)$ be the subgroup of co-twisted automorphisms $(\alpha,J)$ of $H$ for which $$\alpha^*(X)\cong X.$$ 
Also note that $(H^*)^\times$ is a proalgebraic group, and $Z_H$ is a closed normal subgroup in it. Thus, $(H^*)^\times/Z_H$ is a proalgebraic group, which sits as a closed normal subgroup in $\TAut_X(H)$. Therefore, we have

\begin{corollary}\label{iso2} The restriction $\zeta_X: \TAut_X(H)\to \Aut_X(H-\comod)$ is a surjective homomorphism 
of affine proalgebraic groups. Thus, we have an isomorphism of affine proalgebraic groups
$$
\overline{\zeta}: \TAut_X(H)/((H^*)^\times/Z_H)\cong \Aut_X(H-\comod).
$$
\end{corollary}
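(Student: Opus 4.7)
The plan is to upgrade the abstract-group isomorphism of Corollary \ref{iso1} to a morphism of affine proalgebraic groups, by placing on $\TAut_X(H)$ a proalgebraic structure that is compatible under $\zeta_X$ with the one on $\Aut_X(H-\comod)$ built in Proposition \ref{genX}.

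First I would use tensor generation to exhaust $H$ by the finite-dimensional subcoalgebras $C_N \subset H$ spanned by the matrix coefficients of $(X \oplus \mathbf{1})^{\otimes N}$, so that $\C_N = C_N-\comod$ matches the filtration used in the proof of Proposition \ref{genX}. Because the condition $\alpha^*(X)\cong X$ forces any coalgebra automorphism $\alpha$ in $\TAut_X(H)$ to preserve each $C_N$, an element of $\TAut_X(H)$ is the same data as a compatible family of pairs $(\alpha_N, J_N)$, where $\alpha_N$ is a coalgebra automorphism of $C_N$ and $J_N \in (C_N \otimes C_N)^*$ satisfies the $2$-cocycle condition together with $\alpha_N(x*y) = \alpha_N(x) *_{J_N} \alpha_N(y)$. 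Each such pair lies in an affine algebraic group $\TAut^N_X(H)$ cut out by closed algebraic conditions, and $\TAut_X(H) = \underleftarrow{\lim}_N \TAut^N_X(H)$ inherits an affine proalgebraic structure.

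Next I would check that $\zeta_X$ is a morphism of proalgebraic groups: at level $N$ it is the same tautological translation $(\alpha_N, J_N) \mapsto (F_N, J_N)$ from coalgebra/cocycle data to autoequivalence/tensorator data that appears in Proposition \ref{genX}, and is visibly algebraic in the matrix entries and compatible with the transition maps $\psi_N$. Surjectivity of $\zeta_X$ follows level by level from the argument of Proposition \ref{ima}: any tensor autoequivalence of $\C_N$ fixing $X$ has, by Proposition \ref{compat} (applicable because $H$ has slower than exponential growth), the correct dimensions to be induced by a coalgebra automorphism of $C_N$, with the tensorator then providing the Hopf $2$-cocycle. The kernel computation performed in the paragraph preceding Corollary \ref{iso1} identifies $\ker \zeta_X = (H^*)^\times / Z_H$; here $(H^*)^\times$ is proalgebraic because $H^*$ is a proalgebra, and $Z_H$ is its closed normal subgroup cut out by the grouplike condition $\Delta(a) = a \otimes a$ together with centrality. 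Passing to the quotient by this closed normal subgroup yields the desired isomorphism $\overline{\zeta}$ of affine proalgebraic groups.

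The main technical obstacle I anticipate is verifying that the proalgebraic structure placed on $\TAut_X(H)$ via the coalgebras $C_N$ really coincides, through $\zeta_X$, with the one placed on $\Aut_X(H-\comod)$ via the algebras $A_N = (\End P_N)^{\op}$ and the bimodules $T_{mp}$ in Proposition \ref{genX}. Concretely, one must check that $A_N$ and $T_{mp}$ are recovered from the coalgebras $C_N$ and the coproduct of $H$ in a manner algebraic in all parameters, so that the two inverse systems agree not merely as compatible abstract groups but as pro-varieties. Once this bookkeeping is in place, the rest of the argument is formal.
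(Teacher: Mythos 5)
Your proposal is correct and uses the same three ingredients the paper cites (Proposition \ref{genX}, Proposition \ref{ima}, Corollary \ref{iso1}); the paper's proof is just a one-line citation, so your write-up amounts to filling in the bookkeeping that the paper leaves to the reader. The one place you take a slightly different tack is in how $\TAut_X(H)$ acquires its proalgebraic structure: you build it directly from the filtration of $H$ by the finite-dimensional subcoalgebras $C_N$, whereas the paper can be read as transporting the structure from $\Aut_X(H-\comod)$ (already proalgebraic by Proposition \ref{genX}) together with that of $(H^*)^\times/Z_H$. Your route is the more explicit one, and you correctly flag its main wrinkle: the conditions defining $\TAut^N_X(H)$ are not purely level-$N$ statements, since the product sends $C_N\otimes C_N$ into $C_{2N}$ (and likewise the cocycle condition involves higher levels), so the inverse system has to be indexed with the appropriate shift, exactly as the tensor products $\otimes:\C_m\times\C_p\to\C_{m+p}$ are handled in the proof of Proposition \ref{genX}. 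That and the compatibility check you mention between the $C_N$-based and the $A_N$-based filtrations are precisely the right points to nail down; once done, the argument closes as you describe.
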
  

\begin{proof}
This follows from Proposition \ref{genX}, Proposition \ref{ima}, and Corollary \ref{iso1}.   
\end{proof} 

\subsection{Finitely dominated tensor categories and their tensor autoequivalences}  

Let $\C$ be a tensor category, $\lbrace{V_i\rbrace}$ be a collection of objects of $\C$, and $\lbrace{ f_j\rbrace}$ be a collection of morphisms between tensor products of $V_i$. 

\begin{definition} We say that $\C$ is {\it dominated} by $\lbrace{V_i\rbrace}$ and $\lbrace{ f_j\rbrace}$ if any tensor functor $F$ from $\C$ to any tensor category $\E$
is determined by
$\lbrace{ F(V_i)\rbrace}$ and $\lbrace{ F(f_j)\rbrace}$ up to an isomorphism. We say that $\C$ is {\it finitely dominated} if it is dominated by a finite collection of objects and morphisms. 
\end{definition}

Here by $F(f_j)$ we mean the morphism between tensor products of $F(V_i)$ corresponding to $f_j$.  

\begin{remark} If $S\subset R$ are rings then one says that $a\in R$ is dominated by $S$ if for a ring homomorphism $f: R\to T$, $f(a)$ is determined by $f|_S$, \cite{Is}. Further, one says that $R$ is dominated by $S$ if this is true for each $a\in R$ (this is equivalent to saying that the inclusion $S\hookrightarrow R$ is an epimorphism in the category of rings). Note that this definition makes perfect sense if $S$ is just a subset of $R$, rather than a subring (in fact, the notions of domination by a subset $S$ and the subring $\langle S\rangle$ generated by $S$ are obviously equivalent). Thus, it is natural to talk of a ring $R$ dominated by a subset $S\subset R$. Note that 
this is a weaker condition than $R$ being generated by $S$ (e.g., $\Bbb Z[x,x^{-1}]$ is dominated by $x$). 
Our notion of domination is a categorification of this notion, which justifies the terminology. 
\end{remark}

\begin{proposition}\label{strogen}   
Suppose that $\C$ is finitely dominated  and tensor-generated by an object $X$. Then $\Aut_X(\C)$ is of finite type, i.e., is an affine algebraic group. 
\end{proposition}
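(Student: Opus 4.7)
The strategy is to combine the inverse limit presentation $\Aut_X(\C) = \underleftarrow{\lim}_N \Aut_X^N(\C)$ from Proposition \ref{genX} with the finite domination hypothesis to show that $\Aut_X(\C)$ is set-theoretically cut out by conditions at a single finite level $N_0$, and to then promote this to an isomorphism of affine proalgebraic groups with a closed subgroup of the affine algebraic group $\Aut_X^{N_0}(\C)$.

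First, because $\C$ is tensor-generated by $X$, each of the finitely many dominating objects $V_i$ is a subquotient of some $(X\oplus \be)^{\otimes n_i}$, and each dominating morphism $f_j$ lies between tensor products of the $V_i$, hence inside $\C_{m_j}$ for some $m_j$. Take $N_0$ to be larger than all $n_i$ and $m_j$, so that the entire dominating data $\{V_i\},\{f_j\}$ lives inside $\C_{N_0}$.

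The main claim is then that the natural projection $\pi_{N_0}\colon \Aut_X(\C)\to \Aut_X^{N_0}(\C)$ is injective on points. Indeed, if $F\in \Aut_X(\C)$ maps to the identity, a tensor isomorphism $\eta\colon F|_{\C_{N_0}}\Rightarrow \id_{\C_{N_0}}$ provides, in particular, compatible isomorphisms $\eta_{V_i}\colon F(V_i)\cong V_i$ intertwining $F(f_j)$ with $f_j$. Applying the finite domination hypothesis to the two tensor functors $F$ and $\id$ from $\C$ to $\C$, we conclude $F\cong \id$, as desired.

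Finally, for each $N\geq N_0$ the image of the composition $\psi_N^{N_0}\colon \Aut_X^N(\C)\to \Aut_X^{N_0}(\C)$ of transition maps is a closed subgroup of $\Aut_X^{N_0}(\C)$, and these form a descending chain; by noetherianity of affine algebraic groups the chain stabilizes to a closed subgroup $H\subseteq \Aut_X^{N_0}(\C)$. The Mittag--Leffler condition then guarantees that $\pi_{N_0}$ maps $\Aut_X(\C)$ onto $H$, and combined with the injectivity above we obtain an isomorphism of affine proalgebraic groups $\Aut_X(\C)\cong H$. Since $H$ is of finite type, so is $\Aut_X(\C)$. The delicate step is the middle one: one must translate the level-$N_0$ tensor isomorphism into precisely the data ``$F(V_i)\cong G(V_i)$ compatibly with the $f_j$'' that is needed to invoke finite domination, which is ultimately a matter of carefully unpacking the definitions but is where the whole argument lives.
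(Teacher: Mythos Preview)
Your argument is correct and follows the same route as the paper: pick $N_0$ large enough that the dominating data lives in $\C_{N_0}$, and use finite domination to show the projection $\Aut_X(\C)\to \Aut_X^{N_0}(\C)$ is injective. The paper's proof stops there, writing simply ``Hence $\Aut_X(\C)\subset \Aut_X^{N_0}(\C)$, so $\Aut_X(\C)$ is an affine algebraic group.'' Your additional step (stabilization of the images $\mathrm{Im}(\Aut_X^N(\C)\to\Aut_X^{N_0}(\C))$ by noetherianity, plus Mittag--Leffler to identify $\Aut_X(\C)$ with that stable closed subgroup) is a perfectly good way to make explicit what the paper leaves to the reader, namely that an injective morphism from a proalgebraic group into an algebraic one has closed image.
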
  

\begin{proof} Let $\lbrace{V_i,f_j\rbrace}$ be a finite set of objects and morphisms dominating $\C$.
Let $\C_N$ be the finite abelian subcategories of $\C$ defined in the proof of Proposition \ref{genX}. 
Let $N$ be so large that $V_i$ and $f_j$ belong to $\C_N$. 
Let $\Aut_X^N(\C)$ be the affine algebraic group defined in the proof of Proposition \ref{genX}, and let 
$K_N$ be the kernel of the natural homomorphism  $\Aut_X(\C)\to \Aut_X^N(\C)$. Then 
for $F\in K_N$, we have $F(V_i)=V_i$ for all $i$ and $F(f_j)=f_j$ for all $j$. Thus, $F\cong {\rm Id}$, i.e. $K_N=1$. 
Hence, $\Aut_X(\C)\subset \Aut_X^N(\C)$, so that $\Aut_X(\C)$ is an affine algebraic group, as claimed.  
\end{proof} 

\subsection{Finitely presented Hopf algebras} 

Let $H$ be a Hopf algebra which is finitely generated as an algebra. 
This means that the category $H-\comod$ is tensor-generated by a single object $X$. Hence,  we have a natural surjective homomorphism of $H$-bicomodule algebras 
$\theta: T(X\otimes {}^*X)\to H$. 

Now assume that $H$ is finitely presented (e.g., $H=\O(G)$, where $G$ is an algebraic group). This means that for some (and, hence, any) finite set of generators for $H$ there is a finite set of defining relations. In other words, the ideal ${\rm Ker}\theta$ is generated by a finite dimensional bicomodule $R\subset T(X\otimes {}^*X)$, so that $H$ is the cokernel of the natural $H$-bicomodule 
morphism 
$$
\psi: T(X\otimes {}^*X)\otimes R\otimes  T(X\otimes {}^*X)\to T(X\otimes {}^*X).
$$

We have $R\subset \oplus_{j=0}^m (X\otimes {}^*X)^{\otimes j}$. Let $\xi_j: R\to (X\otimes {}^*X)^{\otimes j}$ be the corresponding projections, $j=0,...,m$. 

\begin{proposition}\label{fin} 
Let $H$ be a finitely presented Hopf algebra and let $\E$ be a tensor category over $k$. Then a tensor functor 
$F: H-\comod\to \E$ is determined up to an isomorphism 
by $F(X)$, $F(R)$, and  $F(\xi_j): F(R)\to F(X)^{\otimes j}\otimes X_{\rm space}^{*\otimes j}$. 
That is,  the tensor category $H-\comod$ is finitely dominated (by $X,R$ and $\lbrace\xi_j\rbrace$). 
\end{proposition}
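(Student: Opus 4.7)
The plan is to extend $F$ to ind-completions and exploit the finite presentation of $H$. Any tensor functor $F:H-\comod\to \E$ extends uniquely to a continuous tensor functor $\hat F$ from the ind-completion of $H-\comod$ to the ind-completion $\widehat\E$ of $\E$. Since the objects $T(X\ot {}^*X)$, $R$, the morphism $\psi$, and the quotient $H$ all live in this larger category, it suffices to show that $\hat F$ is determined by the data $F(X),F(R),F(\xi_j)$; the intermediate milestone is that $\hat F(H)$ is determined as a Hopf algebra object in $\widehat\E$.

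First I would observe that $\hat F(T(X\ot {}^*X))$ is determined by $F(X)$ alone. Indeed, $T(X\ot {}^*X)$ is a filtered colimit of direct sums of $(X\ot {}^*X)^{\ot n}$, so continuity and monoidality of $\hat F$, together with preservation of duals, identify $\hat F(T(X\ot {}^*X))$ with the tensor algebra on $F(X)\ot F({}^*X)$. Next, the inclusion $R\hookrightarrow T(X\ot {}^*X)$ is encoded by the object $F(R)$ together with the family $\{F(\xi_j)\}$, so the $H$-bicomodule morphism
\[
\hat F(\psi):\hat F(T(X\ot {}^*X))\ot \hat F(R)\ot \hat F(T(X\ot {}^*X))\to \hat F(T(X\ot {}^*X))
\]
is determined, and so is its cokernel $\hat F(H)$. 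The Hopf algebra structure on $\hat F(H)$ is likewise determined, since the comultiplication of $H$ restricted to the generating subcoalgebra $X\ot {}^*X$ is read off from the coaction of $H$ on $X$ alone, and then extends uniquely as an algebra map to $T$ and descends to $H$.

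Finally, to recover $\hat F$ on an arbitrary $V\in H-\comod$, I would use that $X$ tensor-generates $H-\comod$ to realize $V$ as a subquotient of some $(X\oplus {}^*X)^{\ot N}$, with the relevant sub- and quotient-objects cut out by the coaction of $H$ on this tensor power; this coaction is determined by our data, since it is built from the coaction of $H$ on $X\oplus {}^*X$. Applying $\hat F$ presents $F(V)$ as the analogous subquotient of $F((X\oplus {}^*X)^{\ot N})$, cut out by the coaction of the already determined Hopf object $\hat F(H)$. Given two tensor functors $F_1,F_2$ with compatibly identified $F_i(X),F_i(R),F_i(\xi_j)$, monoidality produces compatible isomorphisms on every tensor power of $X\oplus {}^*X$; these restrict to isomorphisms on the subquotients defining $F_i(V)$ and assemble into a natural monoidal isomorphism $F_1\cong F_2$.

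The main obstacle is this last step: in the absence of a fiber functor on $\E$, one cannot simply invoke Tannakian reconstruction to conclude that an object of $\widehat\E$ is determined by its coaction into $\hat F(H)$. The argument must proceed directly, using that every morphism and every subobject involved in building $V$ lies in the tensor subcategory generated by $X$, and that the identifications on this generating subcategory are controlled by the finite list of relations encoded by $R$ and the $\xi_j$. The finite presentation of $H$ is crucial precisely because it reduces an a priori unbounded compatibility check to the finite data $(X,R,\{\xi_j\})$.
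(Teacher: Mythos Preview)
Your reconstruction of $\hat F(H)$ as the cokernel of $\hat F(\psi)$ is correct and matches the paper's argument. The gap you flag in the last step is genuine and is not closed by what you wrote. Presenting $V$ as a subquotient $V_1\subset V_2\subset W=(X\oplus{}^*X)^{\otimes N}$ and knowing the $\hat F(H)$-coaction on $F(W)$ does not tell you \emph{which} subobjects $F(V_1),F(V_2)$ are: the comodule $F(W)$ can have many $\hat F(H)$-subcomodules, and nothing in your data records the specific inclusion maps. Consequently the monoidal isomorphism $F_1(W)\cong F_2(W)$ you build has no reason to carry $F_1(V)$ onto $F_2(V)$. Saying that ``every morphism involved lies in the tensor subcategory generated by $X$'' does not help, because an arbitrary $H$-comodule map between tensor powers of $X\oplus{}^*X$ is \emph{not} determined by $F(X)$ and $F(\xi_j)$ alone.

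The paper closes this gap by a different device. Rather than subquotients of tensor powers, it uses that every $Y\in H-\comod$ has an injective resolution $Y\to I_0\to I_1\to\cdots$ with each $I_j$ a multiple of $H$ (via the canonical embedding $Z\hookrightarrow H\otimes Z_{\rm space}$). The differentials are then matrices of left $H$-comodule maps $a:H\to H$, and every such map is right multiplication by an element of $H^*$. The key observation is that this right action lifts to the tensor algebra $\widetilde H=\bigoplus_j X^{\otimes j}\otimes X_{\rm space}^{*\otimes j}$, where it acts only on the vector-space factors $X_{\rm space}^{*\otimes j}$ and is therefore already a morphism in the additive $k$-linear diagrammatic category built from $X$ alone. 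Hence $F(a)$ is determined by $F(X)$, the entire resolution is determined after applying $F$, and $F(Y)$ is recovered as the kernel of $F(I_0)\to F(I_1)$. Morphisms and the tensor structure are handled by lifting to resolutions and to $\widetilde H\otimes\widetilde H$. This replaces your unspecified ``subquotient cut out by the coaction'' with an explicit presentation in which every map involved is visibly determined by the dominating data.
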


\begin{proof}  Let $\D$ be the diagrammatic additive monoidal category whose objects are direct sums of tensor products 
of symbols $X$ and $R$, and morphisms are freely generated by $\xi_j: R\to X^{\otimes j}\otimes X_{\rm space}^{*\otimes j}$, 
$j=0,\dots ,m$. Then we have a natural monoidal functor $$K: \D\to H-\comod,$$ and our job is to show that $F$ is determined up to an isomorphism by the composition $F\circ K: \D\to \E$.

In the completion $\widehat \D$ of $\D$ under infinite direct sums, there is an obvious morphism $\widetilde{\psi}$ such that 
$K(\widetilde{\psi})=\psi$. Hence, $F(\psi)=(F\circ K)(\tilde\psi)$ is determined by $F\circ K$. Thus, so is $F(H)=
{\rm Coker}F(\psi)$. Moreover, let $a: H\to H$ be a morphism of left $H$-comodules. Then $a$ is given by the right action of an element of $H^*$, which we will also call $a$. Let $\widetilde{a}$ be the endomorphism 
of $X^{\otimes j}\otimes X_{\rm space}^{*\otimes j}$ in $\D$ obtained by the right action of $a$ on the second component. 
By taking a direct sum over $j$, we get a morphism $$\widetilde{a}: T(X\otimes X_{\rm space}^*)\to T(X\otimes X_{\rm space}^*)$$ in $\widehat \D$
such that $\theta\circ K(\widetilde{a})=a\circ \theta$ in $H-\comod$. This implies that $F(a)$ is determined by $F\circ K$. 

Now, any $Y\in H-\comod$ admits an injective resolution $I_\bullet(Y)$ by multiples of $H$:
$$
Y\to I_0\to I_1\to \dots, 
$$
where $I_j=H\otimes W_j$, and $W_j$ is a vector space. Indeed, for any $H$-comodule $Z$ (not necessarily finite dimensional) we have a canonical inclusion $Z\hookrightarrow H\otimes Z_{\rm space}$; so we may define $I_j$ as $H\otimes (U_j)_{\rm space}$, where $U_j:={\rm Coker}(I_{j-2}\to I_{j-1})$ (where $I_{-1}:=Y$ and $I_0:=H\otimes Y_{\rm space}$). The above argument implies that the action of $F$ on this resolution is determined by $F\circ K$. Thus, $F(Y)$ is determined by $F\circ K$. 
Further, if $f: Z\to Y$ is a morphism in $H-\comod$ then it lifts to a morphism of resolutions $I_\bullet(Z)\to I_\bullet(Y)$, 
which implies that $F(f)$ is determined by $F\circ K$, i.e., $F$ is determined by $F\circ K$ as a functor. 

Now, the tensor structure $J_{ZY}: F(Z)\otimes F(Y)\to F(Z\otimes Y)$ is determined by its values at $Z=Y=H$.
In turn, $J_{HH}$ is determined by $J_{\widetilde{H}\widetilde{H}}$, where  $\widetilde{H}:=T(X\otimes X_{\rm space}^*)$.
Finally, $J_{\widetilde{H}\widetilde{H}}$ is determined by $F\circ K$.   
This proves the proposition. 
\end{proof} 

\begin{corollary}\label{co0}  Let $H$ be a finitely presented Hopf algebra such that $H-\comod$ is tensor-generated by $X$. Then 
\begin{enumerate}
\item[(i)] $\Aut_X(H-\comod)$ is an affine algebraic group; 
\item[(ii)] $\TAut_X(H)/((H^*)^\times/Z_H)$ is an affine algebraic group. 
\end{enumerate}
\end{corollary}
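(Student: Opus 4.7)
The plan is to deduce both statements directly from the machinery already assembled earlier in Section \ref{tetc}, so the corollary should reduce to a bookkeeping argument rather than any new categorical input. For part (i), I would invoke Proposition \ref{fin}, which under the hypothesis that $H$ is finitely presented exhibits $H-\comod$ as a finitely dominated tensor category --- the dominating data being the finite collection consisting of $X$, the finite dimensional bicomodule of relations $R$, and the projections $\xi_0,\ldots,\xi_m$ of $R$ onto tensor powers of $X\otimes {}^*X$. Combining this with the standing assumption that $X$ tensor-generates $H-\comod$, Proposition \ref{strogen} applies verbatim and yields that $\Aut_X(H-\comod)$ is an affine algebraic group of finite type, rather than merely a proalgebraic one.

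For part (ii), I would appeal to Corollary \ref{iso2}, which provides a canonical isomorphism of affine proalgebraic groups
\[
\overline{\zeta}\colon \TAut_X(H)/((H^*)^\times/Z_H) \xrightarrow{\;\cong\;} \Aut_X(H-\comod).
\]
Once (i) is in hand, the target of $\overline{\zeta}$ is known to be of finite type, and the isomorphism transports this property back to the source, delivering (ii) at once.

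In terms of obstacles, this corollary is essentially a formal consequence of the preceding machinery, and the real work has already been absorbed into Proposition \ref{fin}: the passage from a finite presentation of $H$ to a finite dominating family for $H-\comod$ proceeds through the diagrammatic monoidal category $\D$ and a two-step injective resolution argument by copies of $H$. The one subtlety to flag when writing this up is that Corollary \ref{iso2} was set up under the blanket hypothesis of slower-than-exponential growth on $H$; so in the proof proposal I would either carry that hypothesis along or remark that in the applications of interest --- namely $\O(G)$ and $\O_q(G)$, both of finite GK dimension and module-finite over $\O(G)$ --- this growth bound is automatic, and hence no extra hypothesis is needed for the uses to which Corollary~\ref{co0} will be put in later sections.
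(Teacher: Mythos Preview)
Your proposal is correct and follows exactly the paper's own proof: part (i) is obtained by combining Proposition~\ref{fin} with Proposition~\ref{strogen}, and part (ii) then follows from (i) via the isomorphism of Corollary~\ref{iso2}. Your flag about the slower-than-exponential-growth hypothesis is well taken; in the paper this standing assumption is introduced before Proposition~\ref{ima} and is implicitly in force throughout the remainder of the subsection, so it carries over to Corollary~\ref{co0}(ii).
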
 

\begin{proof} Part (i) follows from Proposition \ref{strogen} and Proposition \ref{fin}. 
Part (ii) follows from (i) and Corollary \ref{iso2}.
\end{proof} 

\subsection{The second invariant cohomology of  a tensor category}

Given a tensor category $\C$, let $\Aut_0(\C)$ be the group of isomorphism classes of tensor autoequivalences 
of $\C$ that are isomorphic to the identity as abelian equivalences, \cite{da, BC, GKa}. 
We will call this group the {\it second invariant cohomology group} of $\C$.  

\begin{proposition}\label{lcpag} $\Aut_0(\C)$ has a natural structure of an affine proalgebraic group. 
\end{proposition}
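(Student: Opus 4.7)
The plan is to exhibit $\Aut_0(\C)$ as the inverse limit of a tower of affine algebraic groups, adapting the linearization strategy of Propositions \ref{autproalg} and \ref{genX}.

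First I would fix an exhausting increasing filtration $\C = \bigcup_{N\ge 0} \C_N$ by finite abelian subcategories that is compatible with the monoidal structure, in the sense that $\otimes: \C_m \times \C_p \to \C_{m+p}$. Such a filtration exists for any essentially small tensor category: pick a countable set of objects whose closure under subquotients and tensor products exhausts $\C$, and let $\C_N$ consist of subquotients of direct sums of tensor products of length $\le N$ of the first $N$ chosen objects together with $\be$. Exactly as in the proof of Proposition \ref{genX}, each $\C_N$ is then finite and is identified with $\Rep A_N$ for $A_N = (\End P_N)^{\rm op}$, with surjective transition maps $\xi_N: A_N \twoheadrightarrow A_{N-1}$; the tensor product corresponds to bimodules $T_{mp}$ over $A_{m+p}$ and $A_m \otimes A_p$, and the associator is encoded by bimodule isomorphisms $\Phi_{mps}$ satisfying the pentagon, together with the restriction data $\gamma^{\pm}_{mp}$.

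Next, for each $N$, I would define $\Aut_0^N(\C)$ to be the group of equivalence classes of \emph{partial identity-tensor-structures at level $N$}: collections of $(A_{m+p},\, A_m\otimes A_p)$-bimodule automorphisms $J_{mp}$ of $T_{mp}$, for $m+p \le N$, which preserve the associators $\Phi_{mps}$ for all triples with $m+p+s\le N$ and which are compatible with $\gamma^{\pm}_{mp}$, modulo the equivalence $J \sim J^a$ for $a \in A_N^\times$, where $J^a_{mp}(t) = a\, t\, (a^{-1} \otimes a^{-1})$. The set of such partial structures is an affine variety, cut out by a finite system of polynomial bimodule equations inside a product of algebraic groups of bimodule automorphisms of the $T_{mp}$; the equivalence is the orbit equivalence under an algebraic action of $A_N^\times$. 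As in the proof of Proposition \ref{autproalg}, the quotient $\Aut_0^N(\C)$ is then an affine algebraic group, and the natural restriction morphisms $\psi_N: \Aut_0^{N+1}(\C) \to \Aut_0^N(\C)$ are homomorphisms of algebraic groups.

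Finally, I would identify $\Aut_0(\C)$ with $\underleftarrow{\lim}_N \Aut_0^N(\C)$: up to isomorphism, a tensor autoequivalence of $\C$ that is isomorphic to $\id_\C$ as a plain functor is nothing but a tensor structure $J$ on $\id_\C$, such a $J$ is determined by its restrictions to each $\C_N$, and conversely a compatible family of partial structures assembles into a global one. Since an inverse limit of affine algebraic groups is affine proalgebraic, the conclusion follows. The main obstacle I expect is verifying the exactness needed for the identification with the inverse limit: if two global structures $J, J'$ on $\id_\C$ become equivalent on every $\C_N$ via some $a^{(N)} \in A_N^\times$, one must produce a single coherent element of $\underleftarrow{\lim}_N A_N^\times$ that implements the equivalence on $\C$. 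This reduces to showing that the transition maps $A_{N+1}^\times \to A_N^\times$ are surjective, which follows from the surjectivity of $\xi_N$, so that equivalence witnesses can be chosen coherently by induction on $N$; analogous care is needed for verifying that $\psi_N$ is surjective, which is what genuinely makes the limit an inverse limit of algebraic groups rather than merely of sets.
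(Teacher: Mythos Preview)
Your strategy is workable in outline but is considerably more elaborate than the paper's, and it contains a slip. The paper's proof exploits the fact that an element of $\Aut_0(\C)$ is nothing more than a tensor structure on the identity functor modulo gauge, so one can work directly with natural transformations rather than filtering. Concretely, let $G_1$ be the closed subgroup of the affine proalgebraic group $\prod_{X,Y\in S}\Aut(X\otimes Y)$ (product over isomorphism classes) cut out by naturality in $X,Y$ and the tensor structure axiom, and let $G_2=\Aut(\id_\C)\subset \prod_{X\in S}\Aut(X)$ be the group of natural automorphisms of the identity. Both are closed in proalgebraic groups, hence proalgebraic; the coboundary map $\phi(a)_{X,Y}=a_{X\otimes Y}\circ(a_X^{-1}\otimes a_Y^{-1})$ lands in the center of $G_1$, and $\Aut_0(\C)=\mathrm{Coker}\,\phi$. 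No filtration, no inverse-limit bookkeeping, and no implicit countability assumption on the simples (which your $\C_N$'s require).

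The slip is in your gauge equivalence. Once you fix the underlying functor to be the identity, your $J_{mp}$ are honest $(A_{m+p},A_m\otimes A_p)$-bimodule automorphisms of $T_{mp}$. But for a general $a\in A_N^\times$ the map $t\mapsto a\,t\,(a^{-1}\otimes a^{-1})$ intertwines the original bimodule structure with the one twisted by $\mathrm{Ad}\,a$; it is a bimodule \emph{automorphism} of $T_{mp}$ only when the image of $a$ is central in each $A_m$. So the group that acts on your set of partial tensor structures is $\Aut(\id_{\C_N})$ (central units), not $A_N^\times$. With that correction (and the countability hypothesis) your argument goes through, and the centrality even makes the quotient and the inverse-limit identification easier than you feared; but the paper's direct argument is both shorter and more general.
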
 

\begin{proof} Let $S$ be the set of isomorphism classes of objects of $\C$. 
Let $G_1$ be the closed subgroup of the affine proalgebraic group
$\prod_{X,Y\in S}\Aut(X\otimes Y)$ cut out by the functoriality condition in $X$ and $Y$
and the tensor structure axiom 
$$
J_{X\otimes Y,Z}\circ (J_{X,Y}\otimes {\rm Id}_Z)=J_{X,Y\otimes Z}\circ ({\rm Id}_X\otimes J_{Y,Z}), 
$$
i.e., the closed subgroup in $\Aut(\otimes)$ cut out by the tensor structure axiom. 
Also let $G_2\subset \prod_{X\in S}\Aut(X)$ be the closed subgroup cut out by the functoriality condition in $X$ 
(i.e., $G_2=\Aut(\id_\C)$, a commutative affine proalgebraic group). Let $\phi: G_2\to G_1$ 
be the homomorphism defined by $\phi(a)_{X,Y}=a_{X\otimes Y}\circ (a_X^{-1}\otimes a_Y^{-1})$. 
Then $\phi$ is a homomorphism of affine proalgebraic groups, $\phi(G_2)$ is a central subgroup of $G_1$, 
and $\Aut_0(\C)={\rm Coker}\phi$. This implies the statement. 
\end{proof} 

\begin{remark} In the case $\C=H-\comod$, where $H$ is a Hopf algebra, Proposition \ref{lcpag} follows from Theorem 6.7 of \cite{BKa}.  
\end{remark} 

\begin{corollary}\label{co1} (i) If $\C$ is finitely dominated then the second invariant cohomology $\Aut_0(\C)$ is an affine algebraic group (i.e., is of finite type). 

(ii) If $H$ is a finitely presented Hopf algebra then the second invariant cohomology $\Aut_0(H-\comod)$ is  
an affine algebraic group. 
\end{corollary}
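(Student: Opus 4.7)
The plan is to prove (i) by a truncation argument and to deduce (ii) from (i) using Proposition \ref{fin}.

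For (i), I start from Proposition \ref{lcpag}, which presents $\Aut_0(\C)$ as the cokernel $G_1/\phi(G_2)$ of a homomorphism between affine proalgebraic groups; here $G_1$ parametrizes tensor structures on the identity functor and $G_2 = \Aut(\id_\C)$. Fixing a finite collection $\{V_i, f_j\}$ dominating $\C$, I choose $N$ large enough that the source and target of each $f_j$ is a tensor product of the $V_i$ of length at most $N$, and let $\Omega_N$ denote the finite set of all such tensor products. Restricting the construction of Proposition \ref{lcpag} to objects in $\Omega_N$ produces honest affine algebraic groups $G_1^{\Omega_N}, G_2^{\Omega_N}$ of finite type (they are closed subgroups of products over the finite set $\Omega_N$) and a homomorphism $\phi^{\Omega_N}$ whose cokernel $\Aut_0^{\Omega_N}(\C) := G_1^{\Omega_N}/\phi^{\Omega_N}(G_2^{\Omega_N})$ is of finite type, together with a natural restriction morphism $r_N : \Aut_0(\C) \to \Aut_0^{\Omega_N}(\C)$ of proalgebraic groups. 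Since the target is of finite type, it suffices to show that $r_N$ is injective.

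For the injectivity, suppose $r_N(F) = 1$ for some $F \in \Aut_0(\C)$. Then, for a representative tensor structure $J$ of $F$, the restriction $J|_{\Omega_N}$ equals $\phi^{\Omega_N}(a)$ for some natural family $(a_X)_{X \in \Omega_N}$. Using the $a_{V_i}$ to modify the abelian isomorphism $F \cong \id$ at the $V_i$, I obtain identifications $\tilde\eta_i : V_i \cong F(V_i)$; the explicit formula $\phi(a)_{X,Y} = a_{X \otimes Y} \circ (a_X \otimes a_Y)^{-1}$ then shows that under these identifications the tensor structure of $F$ becomes trivial on the iterated tensor products in $\Omega_N$, so in particular $F(f_j) = f_j$ in $\C$ for each $j$. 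Applying the finite domination hypothesis to the two tensor functors $\id, F : \C \to \C$ with the identifications $\tilde\eta_i$ yields $F \cong \id$ as tensor functors, whence $F$ represents the trivial class in $\Aut_0(\C)$. This establishes (i), and (ii) follows immediately, since Proposition \ref{fin} asserts that $H-\comod$ is finitely dominated whenever $H$ is a finitely presented Hopf algebra.

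The main obstacle is the translation step: verifying that the coboundary condition on the finite set $\Omega_N$ really delivers the categorical data required by the finite domination hypothesis. The key observation is that one does not need to extend $(a_X)_{X \in \Omega_N}$ to a global natural automorphism of $\id_\C$; it is enough to use it locally at the $V_i$ to produce the identifications $\tilde\eta_i$ making $F(f_j) = f_j$ hold in $\C$, which is precisely the data on which domination feeds.
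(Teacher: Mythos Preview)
Your argument for (i) is correct, but the paper's proof is a two-line application of what has already been established. The paper simply observes that $\Aut_0(\C)$ is a closed subgroup of $\Aut_X(\C)$, where $X$ is a tensor-generating object, and then invokes Proposition~\ref{strogen}, which already shows $\Aut_X(\C)$ is of finite type when $\C$ is finitely dominated. Your route---truncating the proalgebraic presentation of $\Aut_0(\C)$ from Proposition~\ref{lcpag} to a finite set $\Omega_N$ and using domination to prove the restriction map is injective---is essentially re-running the proof of Proposition~\ref{strogen} in the special case of $\Aut_0$, rather than quoting it. The paper's route is more economical because the truncation-plus-domination work was already packaged into Proposition~\ref{strogen}; your route is more self-contained and, incidentally, does not explicitly invoke a tensor-generating object (which the paper's argument does). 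The ``translation step'' you flag as the main obstacle is exactly the content of the injectivity argument in the proof of Proposition~\ref{strogen}, so you have correctly identified where the substance lies. For (ii) the two proofs agree: both deduce it from (i) via Proposition~\ref{fin}.
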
 

\begin{proof} (i) It is clear that $\Aut_0(\C)$ is a closed subgroup of $\Aut_X(\C)$, where $X$ is a tensor-generating object. Thus, the statement follows from Proposition \ref{strogen}. 

Part (ii) follows from (i) and Corollary \ref{co0}. 
\end{proof} 

\begin{example} Let $G$ be an abstract group, and $H=kG$ be its group algebra. 
Then $H$ is finitely generated, respectively finitely presented, iff so is $G$.
Also, $H-\comod=\Vec_G$, the tensor category of finite dimensional $G$-graded vector spaces. Thus, $\Aut_0(H-\comod)=H^2(G,k^\times)$. 
Hence, Corollary \ref{co1} reduces in this case to the well known result that for a finitely presented group $G$ the group $H^2(G,k^\times)$ is an algebraic group, which holds due to the Serre-Hochschild exact sequence $\dots\to \Hom(N,k^\times)^G\to H^2(G,k^\times)\to 1,$ where $1\to N\to \widetilde{G}\to G\to 1$ is a finite presentation of $G$ (i.e., $\widetilde{G}$ is a finitely generated free group).
Indeed, since this presentation is finite, 
$N$ is the normal closure of a finitely generated group, hence 
$\Hom(N,k^\times)^G$ is an affine algebraic group (i.e., is of finite type). 

Note that if $G$ is finitely generated but not finitely presented, then this may be false. E.g., if $G$ is the Grigorchuk group \cite{Gri}, then $H_2(G,\mathbb{Z})$ is infinitely generated 2-elementary abelian, hence  $H^2(G, k^\times)=\Hom(H_2(G,\mathbb{Z}),k^\times)$ is an infinite product of copies of $\mu_2$ (a proalgebraic group of infinite type). 
Thus, the finite presentation assumption in Proposition \ref{fin} and the finite domination assumption in Proposition \ref{strogen} cannot be dropped. 
\end{example} 

\section{Tensor autoequivalences of $\O_q(G)-\comod$} \lb{tao}

\subsection{Finite presentation for $\O_v(G)$ over $k[v,v^{-1}]$}

Let $G$ be a simple algebraic group.
Let $v$ be an indeterminate and let $\O_v(G)$ denote the quantized function algebra of $G$ over $k[v,v^{-1}]$.

\begin{proposition}\label{finpre}  
$\O_v(G)$ is finitely presented as an algebra. 
\end{proposition}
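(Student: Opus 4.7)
The plan is to lift a finite presentation from the classical coordinate algebra using flatness, or equivalently to invoke an explicit presentation from the literature. Set $B := k[v,v^{-1}]$ and $A := \O_v(G)$. The three essential inputs are: (a) $A$ is finitely generated as a $B$-algebra, with generators given by matrix coefficients of the direct sum of fundamental representations of $U_v(\g)$; (b) $A$ is free (hence flat) over $B$, by the quantum Peter--Weyl decomposition $A = \bigoplus_{\lambda \in P_+} V_\lambda \otimes V_\lambda^*$ together with Lusztig's integral form making each $V_\lambda$ free of finite rank over $B$; (c) the classical specialization $A/(v-1)A \cong \O(G)$ is finitely presented over $k$, since $G$ is an affine algebraic group of finite type.

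Using (a), write $F := B\langle x_1,\dots,x_n\rangle$ for the free $B$-algebra on a finite generating set of $A$, and let $\pi\colon F \twoheadrightarrow A$ have kernel $I$. Using (c), choose finitely many $\tilde r_1,\dots,\tilde r_m \in I$ whose reductions modulo $v-1$ generate $\ker\bigl(F/(v-1)F \twoheadrightarrow \O(G)\bigr)$. Let $J := \langle \tilde r_1,\dots,\tilde r_m\rangle \subset I$, and set $F' := F/J$ and $I' := \ker(F' \twoheadrightarrow A)$. By the choice of the $\tilde r_j$ one has $I' \subseteq (v-1)F'$, and the torsion-freeness of $A$ in (b) yields the implication $(v-1)s \in I' \Rightarrow s \in I'$ (since $s$ would have to map to a $(v-1)$-torsion element of $A$), hence $I' = (v-1)I'$.

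The main obstacle, and hard part of the argument, is to upgrade $I' = (v-1)I'$ to $I' = 0$. My approach is to exploit the Peter--Weyl filtration $A = \bigcup_{\lambda \in P_+} A^{\le \lambda}$, where $A^{\le \lambda}$ is the $B$-span of matrix coefficients of $V_\mu$ for $\mu \le \lambda$ and is a finitely generated free $B$-module. The $\tilde r_j$ may be taken in $F^{\le \lambda_0}$ for some fixed $\lambda_0$, so the induced filtration on $F'$ has finitely generated pieces $F'^{\le \lambda}$. A Nakayama-type induction on $\lambda$, together with the torsion-freeness inherited at each stage from $A$, then forces $I'^{\le \lambda} = 0$ and hence $I' = 0$. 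The delicate point is verifying that the relations can be chosen weight- and filtration-homogeneously and that the filtration pieces really match up at the level of ranks, which ultimately reduces to the PBW-type freeness in (b).

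A cleaner alternative, if this filtered Nakayama argument proves cumbersome, is to invoke an explicit finitely presented model of $\O_v(G)$ from the literature: for the classical series $SL_N$, $Sp_N$, $SO_N$ one has the Faddeev--Reshetikhin--Takhtajan presentation (finitely many quadratic $RTT$ relations on the matrix coefficients of a vector representation plus a quantum determinant or quantum orthogonality/symplecticity relation), and for arbitrary simple $G$ analogous finite presentations are known via the reflection equation or the $L$-operator formalism. In either form, finite presentation follows directly.
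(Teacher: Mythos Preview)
Your first approach has a genuine gap exactly where you flag the ``main obstacle.'' You correctly show $I'=(v-1)I'$, but the filtration argument you propose to conclude $I'=0$ does not go through. Even if you filter $F$ (and hence $F'=F/J$) by total weight so that each $F'^{\le\lambda}$ is a finitely generated $B$-module, you need $I'\cap F'^{\le\lambda}=(v-1)\bigl(I'\cap F'^{\le\lambda}\bigr)$ in order to apply Nakayama piece by piece. From $x=(v-1)y$ with $x\in I'\cap F'^{\le\lambda}$ and $y\in I'$ you must deduce $y\in F'^{\le\lambda}$, i.e.\ that $F'/F'^{\le\lambda}$ is $B$-torsion-free. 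But $F'/F'^{\le\lambda}\cong (F/F^{\le\lambda})\big/(\text{image of }J)$ is a quotient of a free $B$-module by an uncontrolled submodule, and there is no reason for it to be torsion-free; the freeness you invoke in~(b) is a statement about $A$, not about $F'$. Put differently: your $\tilde r_j$ are just \emph{some} lifts of the classical relations to $I$, and imposing them on the free algebra need not produce a flat $B$-algebra. Your second alternative, citing explicit presentations from the literature, is adequate for $SL_N,Sp_N,SO_N$ but is hand-waving for exceptional $G$.

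The paper avoids the lifting problem entirely by interposing a Noetherian finitely presented intermediary. One forms the FRT algebra $A_{\mathrm{FRT}}$ over $k[v,v^{-1}]$: generators the entries of $T\in\End X\otimes A_{\mathrm{FRT}}$ for $X$ a direct sum of standard comodules whose highest weights generate $P_+$, with defining relations $R_{XX}^{12}T^{13}T^{23}=T^{23}T^{13}R_{XX}^{12}$. This algebra is finitely presented by construction and surjects onto $\O_v(G)$ by \cite[Proposition~3.3]{Lu3}. The key point is that $A_{\mathrm{FRT}}$ is \emph{Noetherian} (Brown--Goodearl \cite[Lemma~I.8.17]{BG}, whose argument carries over verbatim from $q\in k^\times$ to the base $k[v,v^{-1}]$). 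Hence the kernel of $A_{\mathrm{FRT}}\twoheadrightarrow\O_v(G)$ is a finitely generated ideal, and finite presentation of $\O_v(G)$ follows. The moral: rather than descending all the way to the free algebra (which is far from Noetherian) and trying to lift relations from the classical fibre, pass first to a Noetherian quotient whose further quotients are automatically finitely presented.
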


\begin{proof} Let $S$ be a finite generating subset of
the cone $P_+$ of dominant weights of $G$. Let $X=\oplus_{\lambda\in S}V_\lambda$ be the direct sum of all standard $\O_v(G)$-comodules whose highest weights are in $S$. Then we have an element $T=T_X\in \End X\otimes \O_v(G)$ determining the coaction of $\O_v(G)$ on $X$.
By \cite[Proposition 3.3]{Lu3} the matrix elements of $T_X$ generate 
$\O_v(G)$. Also, $T_X$ satisfies the Faddeev-Reshetikhin-Takhtajan (FRT) relation
$$
R_{XX}^{12}T^{13}T^{23}=T^{23}T^{13}R_{XX}^{12},
$$  
where  $R_{XX}$ is the specialization of the universal R-matrix to $X\otimes X$ (note that to define $R_{XX}$, one may need to adjoin $v^{1/m}$ for some $m$, but the FRT relations contain only integer powers of $v$). Let $A$ be the algebra generated over $k[v,v^{-1}]$ by the matrix elements of $T$ with these relations taken as defining. Then we have a surjective homomorphism $\eta: A\to \O_v(G)$. 
By  \cite[Lemma I.8.17]{BG}, the algebra $A$ is Noetherian (more precisely, this lemma is proved when $v$ is specialized to 
$q\in k^\times$, but it generalizes verbatim to our setting). Hence the ideal ${\rm Ker}\eta$ is finitely generated. 
This implies that $\O_v(G)$ is finitely presented, as desired.  
\end{proof} 

\begin{corollary}\label{finpre1} For any $q\in k^\times$, the algebra $\O_q(G):=\O_v(G)/(v-q)$ is finitely presented.
Also, the $k(v)$-algebra $\widetilde{\O}_v(G):=\O_v(G)\otimes_{k[v,v^{-1}]}k(v)$ is finitely presented. 
\end{corollary}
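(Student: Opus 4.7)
The plan is to derive both claims immediately from Proposition \ref{finpre} by invoking the fact that finite presentation of an associative algebra is preserved under arbitrary base change. More precisely, if $A$ is a finitely presented $R$-algebra with presentation
$$A \cong R\langle x_1,\ldots,x_n\rangle/\langle r_1,\ldots,r_m\rangle,$$
and $R\to S$ is any ring homomorphism, then
$$A\otimes_R S \;\cong\; S\langle x_1,\ldots,x_n\rangle/\langle \bar r_1,\ldots,\bar r_m\rangle,$$
where $\bar r_j$ is the image of $r_j$ under $R\to S$. This is a direct consequence of the universal property of free associative algebras together with right exactness of the tensor product, and it preserves both the number of generators and the number of relations.

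First I would fix a finite presentation of $\O_v(G)$ over $k[v,v^{-1}]$ as furnished by Proposition \ref{finpre}; for instance one may take the generators to be the matrix elements of the element $T_X$ appearing in the proof of that proposition, and the relations to be a finite set generating the kernel of the surjection $\eta\colon A\twoheadrightarrow \O_v(G)$ (which exists because $A$ is Noetherian). For the first assertion, applying base change along the evaluation homomorphism $k[v,v^{-1}]\to k$ sending $v\mapsto q$ transports this to a finite presentation of $\O_q(G)=\O_v(G)\otimes_{k[v,v^{-1}]} k$ over $k$. For the second, applying base change along the inclusion $k[v,v^{-1}]\hookrightarrow k(v)$ yields a finite presentation of $\widetilde{\O}_v(G)$ over $k(v)$ with the same generators and relations. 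There is no genuine obstacle: the only substantive ingredient is Proposition \ref{finpre} itself, and the remainder is a formal base change argument.
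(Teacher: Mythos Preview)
Your proposal is correct and matches the paper's approach: the paper gives no proof for this corollary, treating it as an immediate consequence of Proposition~\ref{finpre} via base change, which is exactly what you do.
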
  

\begin{remark} 
\begin{enumerate}
\item[(1)] Proposition \ref{finpre} also holds over $\Bbb Z[v,v^{-1}]$ (i.e., in the setting of \cite{Lu3}), with the same proof. 
\item[(2)] A nice finite presentation of $\widetilde{\O}_v(G)$ (and thus of $\O_q(G)$ for transcendental $q$) 
is given in \cite{I}.   
\end{enumerate}
\end{remark} 

\subsection{Tensor autoequivalences of $\O_q(G)-\comod$ outside finitely many roots of unity} 

In this subsection we classify tensor and braided autoequivalences of $\O_q(G)-\comod$.
Here we don't make any coprimeness assumptions on the order of $q$, and just assume that $q$ is a root of unity of sufficiently large order $\ell$. 

Note that any tensor autoequivalence $F$ of ${\mathcal O}_q(G)-\comod$ naturally acts 
on the center $Z_G$ of $G$, as $Z_G^\vee$ is the universal grading group of ${\mathcal O}_q(G)-\comod$. Thus, for a subgroup $C\subset Z_G$, $F$ defines an equivalence 
${\mathcal O}_q(G/C)-\comod\to {\mathcal O}_q(G/F(C))-\comod$. 

Let $\Gamma_G=\Out G$ (e.g., if $G$ is simply connected then $\Gamma_G=\Gamma$) and \linebreak $\widetilde{\Gamma_G}:=\Out G\ltimes H^2(Z_G^\vee,k^\times)$.

\begin{theorem}\label{tensautoeq}  For all $q\in k^\times$ except finitely many roots of unity: 
\begin{enumerate}
\item[(i)] $\Aut(\O_q(G)-\comod)\cong \widetilde{\Gamma_G}$. 
\item[(ii)] $\Aut^{\rm br}(\O_q(G)-\comod)\cong \Gamma_G$.
\end{enumerate}
\end{theorem}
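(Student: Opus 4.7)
The plan is to reduce both parts of the theorem to showing that, modulo the natural action of $\widetilde{\Gamma_G}$ (resp.\ $\Gamma_G$) on $\O_q(G)-\comod$, no further tensor autoequivalences exist for $q$ outside a finite set of roots of unity, and then to prove this by promoting the pointwise algebraic-group structure on $\Aut(\O_q(G)-\comod)$ from Section \ref{tetc} to a group scheme of finite type over $k[v,v^{-1}]$ whose fibres at non-roots of unity are controlled by Neshveyev--Tuset \cite{NT1,NT2}. For the reduction, let $F\in \Aut(\O_q(G)-\comod)$. By Lemma \ref{actiononrepG} and the discussion following it, $F|_{\Rep G}$ lies in $\widetilde{\Gamma_G}$, so after composing $F$ with a suitable element of this group (or of $\Gamma_G$ in the braided case, using the M\"uger-centre characterisation of $\Rep G$ and Tannakian uniqueness of fibre functors) we may assume $F|_{\Rep G}\cong \id$; by Proposition \ref{pressim}, such $F$ then preserves every simple object of $\O_q(G)-\comod$ up to isomorphism.

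Fix a dominant weight $\lambda$ with $C_\lambda=1$, independent of $q$, so that by Lemma \ref{genera} the object $L_\lambda$ tensor-generates $\O_q(G)-\comod$ uniformly in $q\in k^\times$. Let $\mathcal{H}_q$ and $\mathcal{H}_q^{\rm br}$ denote the subgroups of $\Aut_{L_\lambda}(\O_q(G)-\comod)$ consisting of classes (resp.\ braided classes) that act trivially on $\Rep G$. By Corollary \ref{co0}(i), applied to the finitely presented Hopf algebra $\O_q(G)$ of Corollary \ref{finpre1}, $\Aut_{L_\lambda}(\O_q(G)-\comod)$ is an affine algebraic group, and the two further conditions above cut out closed subgroups, so $\mathcal{H}_q$ and $\mathcal{H}_q^{\rm br}$ are affine algebraic. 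The natural action of $H^2(Z_G^\vee,k^\times)$ on $\O_q(G)-\comod$ embeds this finite group as a closed subgroup of $\mathcal{H}_q$.

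The heart of the argument is to realise these as fibres of a single $k[v,v^{-1}]$-group scheme. Proposition \ref{finpre} furnishes a finite presentation of the Hopf $k[v,v^{-1}]$-algebra $\O_v(G)$ with data $(X,R,\{\xi_j\})$. The co-twisted-automorphism machinery of Section \ref{tetc} (Propositions \ref{fin} and \ref{strogen}, Corollary \ref{iso2}) goes through relatively to yield affine group schemes of finite type $\mathcal{H},\mathcal{H}^{\rm br}\to S:=\text{Spec}\,k[v,v^{-1}]$ whose closed fibres over $v=q$ are $\mathcal{H}_q$ and $\mathcal{H}_q^{\rm br}$. By Neshveyev--Tuset \cite{NT1,NT2}, at every closed point $q\in k^\times$ that is not a root of unity the canonical closed immersions $H^2(Z_G^\vee,k^\times)\times_k S\hookrightarrow \mathcal{H}$ and $S\hookrightarrow \mathcal{H}^{\rm br}$ are fibre-wise isomorphisms. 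Upper semicontinuity of fibre dimension (combined with the fact that a closed immersion of $k(\eta)$-schemes with equal finite length is an isomorphism) forces these inclusions to be isomorphisms on the generic fibre, and a standard spreading-out argument then shows they are isomorphisms over a cofinite open $U\subset S$; its complement is a proper closed subset of the one-dimensional irreducible scheme $S$, hence a finite set of closed points, which by Neshveyev--Tuset must lie in the set of roots of unity. Combining with the reduction in the first paragraph, this yields the theorem for $q\in U(k)$.

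The principal obstacle is the relative version of Section \ref{tetc} over $k[v,v^{-1}]$: one must verify that the scheme of co-twisted automorphisms of the Hopf $k[v,v^{-1}]$-algebra $\O_v(G)$ preserving $X$ and acting trivially on $\Rep G$, modulo the inner action of the group of invertible elements of $\O_v(G)^*$, really is a $k[v,v^{-1}]$-group scheme of finite type whose formation commutes with closed-fibre base change to $k$ at every $q\in k^\times$. This requires controlling flatness of $\O_v(G)$ and of the finite-dimensional bimodule of relations $R$ over $k[v,v^{-1}]$, together with a base-change compatibility of the inner-automorphism quotient; once this is in place, the generic-fibre identification and the spreading-out step are routine.
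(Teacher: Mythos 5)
Your high-level strategy — equip the autoequivalence group with an affine algebraic group structure via a fixed tensor generator, realise it as a fibre of a group scheme over $\mathrm{Spec}\,k[v,v^{-1}]$, identify the generic fibre using Neshveyev--Tuset, and spread out — is essentially the paper's, and you have correctly identified the technical obstacle (constructing a finitely presented $R$-form of the coordinate ring of $\Aut_X$ with good base-change behaviour; the paper handles this by inverting a suitable $f$ and invoking Grothendieck's generic freeness). However, there are two concrete gaps. First, the assertion that $H^2(Z_G^\vee,k^\times)$ embeds as a closed subgroup of $\mathcal{H}_q$ (the classes restricting trivially to $\Rep G$) is false: the restriction to $\Rep G$ of the cocycle twist by $c$ is the twist by $(\bar\lambda,\bar\mu)\mapsto c(\ell\bar\lambda,\ell\bar\mu)$, and since $\ell$ is coprime to $|Z_G^\vee|$ this is trivial in cohomology if and only if $c$ is; so the $H^2$-action restricts faithfully to $\Rep G$, and what one should in fact prove is that $\mathcal{H}_q$ is trivial. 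Second, and more seriously, the proposed relative group scheme $\mathcal{H}\to\mathrm{Spec}\,k[v,v^{-1}]$ with fibres $\mathcal{H}_q$ cannot be defined as stated: the cutting-out condition "acts trivially on $\Rep G$" refers to the subcategory $\Rep G\subset\O_q(G)-\comod$, which exists only at roots of unity (it is the image of the Frobenius $\O(G)\hookrightarrow\O_q(G)$). At the generic point, $\widetilde{\O}_v(G)-\comod$ is semisimple with no such subcategory, so the generic fibre of your $\mathcal{H}$ is undefined and the Neshveyev--Tuset comparison cannot be applied to it.

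The paper sidesteps both problems by not reducing to the subgroup killing $\Rep G$. Instead it shows directly — via Steinberg's tensor product theorem for quantum groups, the M\"uger-centre characterisation of $\Rep G$, and Proposition~\ref{dimpres} — that for $\ell$ large any $F$ permutes the $L_\lambda^q$ with $\lambda\in\Sigma_N$, hence fixes the $\Out G$-invariant tensor generator $X=\oplus_{\lambda\in\Sigma_N}L_\lambda^q$, so $\Aut(\O_q(G)-\comod)\subset\Aut_X(\O_q(G)-\comod)$. It then constructs the $R$-form $H$ of $\O(\Aut_X)$ and compares it not with a kernel-type subgroup but with the \emph{constant} group scheme $\widetilde{\Gamma_G}$ (via the surjection $\beta\colon H\to R[\widetilde{\Gamma_G}]$), which makes sense uniformly over $R$; freeness of $H$ over $R$ and the isomorphy of $\beta$ at the generic fibre then force $\mathrm{Ker}\,\beta=0$. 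A variant of your approach could be made to work if you replaced $\mathcal{H}_q$ by $\Aut_X$ for an $\Out G$-stable generator and used the reduction plus Proposition~\ref{pressim} only to conclude $\Aut\subset\Aut_X$ at roots of unity, but as written the scheme you propose to spread out does not exist.
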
 

\begin{proof} If $q$ is not a root of unity, this is shown in \cite{NT2} (for (i)) and \cite{NT1} (for (ii)); more precisely, the results of \cite{NT1,NT2} are proved for simply connected groups, but the arguments extend without significant changes to the general case. So we only have to prove the statements for roots of unity. 

Let us prove (i). For a positive integer $N$, let $\Sigma_N$ be the set of all nonzero dominant integral weights $\lambda$ for $G$ such that 
the irreducible representation $L_\lambda$ of $G$ with highest weight $\lambda$ 
has dimension $\le N$. If the order of $q$ is large enough, these 
$L_\lambda$ have $q$-analogs, $\O_q(G)$-comodules $L_\lambda^q$ of the same dimension as $L_\lambda$, which are also irreducible. 

Let $F\in \Aut(\O_q(G)-\comod)$.
We claim that for sufficiently large order $\ell$ of $q$, the functor $F$ permutes $L_\lambda^q$, $\lambda\in \Sigma_N$. 
Indeed, by Steinberg's tensor product theorem for quantum groups (\cite[Proposition 9.2]{Lu2}), for large enough $\ell$ the only irreducible comodules over $\O_q(G)$ 
which have dimension $\le N$, don't belong to the M\"uger center of \linebreak $\mathcal{O}_q(G)-\comod$, and cannot be nontrivially decomposed as a tensor product are $L_\lambda^q$, $\lambda\in \Sigma_N$. But $F(L_\lambda^q)$ cannot belong to the M\"uger center of  $\mathcal{O}_q(G)-\comod$, as it generates a subcategory of the form $\mathcal{O}_q(G/C)-\comod$, while $F(\mathcal{O}_q(G/C)-\comod)=\mathcal{O}_q(G/F(C))-\comod$ is not contained in the M\"uger center of $\mathcal{O}_q(G)-\comod$. So, since by Proposition \ref{dimpres} $F$ preserves vector space dimensions, it must permute $L_\lambda^q$, $\lambda\in \Sigma_N$. 

Now pick $N$ so large that $X=X_N^q:=\oplus_{\lambda\in \Sigma_N}L_\lambda^q$ is a tensor generator of $\O_q(G)-\comod$. Then $F(X)\cong X$, so $F\in \Aut_X(\O_q(G)-\comod)$. Note that we have a natural inclusion $\widetilde{\Gamma_G}\subset \Aut(\O_q(G)-\comod)$ (see \cite{NT2}). Thus, our job is to show that for sufficiently large $\ell$, this inclusion is an equality. 

Let $R=k[v,v^{-1}][1/f]$, where $f$ is a nonzero polynomial vanishing at roots of unity of low order.  
Since by Proposition \ref{finpre} $\O_v(G)$ is finitely presented, by Corollary \ref{co0} 
the commutative algebra $\O(\Aut_X(\O_q(G)-\comod))$ is the specialization 
at $v=q$ of a finitely generated commutative algebra $H$ over $R$ (for all but finitely many roots of unity $q$).
Indeed, we can take a finite presentation of the $k(v)$-algebra $\O(\Aut_X(\widetilde{\O}_v(G)-\comod))$ 
and define $H$ by the same generators and relations over $R$ (for a suitable choice of $f$). Moreover, since $\widetilde{\Gamma_G}$ acts faithfully by automorphisms of $\O_v(G)$, we have a surjective algebra homomorphism 
$\beta: H\to R[\widetilde{\Gamma_G}]$, where $R[\widetilde{\Gamma_G}]$ is the algebra of $R$-valued functions on $\widetilde{\Gamma_G}$. Let $K:={\rm Ker}\beta$. By Grothendieck's generic freeness lemma \cite[Theorem 14.4]{Eis}, since $H$ is finitely generated, we may assume without loss of generality that $H$ is a free $R$-module (by choosing $f$ appropriately). Then $H\cong K\oplus R[\widetilde{\Gamma_G}]$ as an $R$-module, hence $K$ is a projective $R$-module. 

Moreover,  by the result of \cite{NT2}, $\beta$ becomes an isomorphism upon tensoring with $k(v)$, hence $K\otimes_R k(v)=0$. 
As $K$ is projective, this implies that $K=0$, i.e., 
$\beta$ is an isomorphism. Thus, $\beta$ is an isomorphism after specializing $v$ to all roots of unity $q$ that are not roots of $f$.
Hence, for all such roots of unity $q$ we have an isomorphism $\Aut(\O_q(G)-\comod)\cong \widetilde{\Gamma_G}$, as desired.  

Part (ii) is proved in the same way, using the group $\Gamma_G$ instead of $\widetilde{\Gamma_G}$, and \cite{NT1} instead of \cite{NT2}. 
\end{proof}  

\begin{remark} It would be interesting to obtain a more direct proof of Theorem \ref{tensautoeq} (and desirably of its stronger version, 
giving an explicit list of excluded roots of unity) by generalizing the arguments  of \cite{NT1,NT2} to the case when $q$ is a root of unity. 
\end{remark} 

\subsection{Sharper results for classical groups} 

For classical groups $G=SL_N,Sp_N,SO_N$, we can use the Faddeev-Reshetikhin-Takhtajan presentations of $\O_q(G)$ to 
obtain a sharper result, i.e.,  one for all $q$ of order $\ell>N$. Note that in these cases $Z_G$ is cyclic, so 
$\widetilde{\Gamma_G}=\Gamma_G=\Out G$. 

\begin{theorem}\label{tensautoeq1} If $G=SL_N,Sp_N,SO_N$ and $\ell>N$ then 
$$\Aut(\O_q(G)-\comod)\cong \Aut^{\rm br}(\O_q(G)-\comod)\cong \Out G.$$   
\end{theorem}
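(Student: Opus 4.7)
The plan is to adapt the proof of Theorem \ref{tensautoeq}, replacing the abstract finite presentation of Proposition \ref{finpre} by the explicit Faddeev-Reshetikhin-Takhtajan (FRT) presentation of $\O_v(G)$ available for classical $G$. For $G=SL_N,Sp_N,SO_N$, the FRT presentation is generated by the matrix entries of the standard vector comodule $V$ of dimension $N$, with relations of $RTT$-type coming from the vector--vector $R$-matrix, together with the quantum determinant relation in type $A$ or the quantum bilinear form compatibility in types $B,C,D$. All structure constants are polynomial in $v^{\pm1}$, and after inverting the quantum integers $[k]_v$ for $2\le k\le N$ (the only denominators that can enter via quantum (anti)symmetrizers and the antipode) one obtains a flat Hopf-algebra family $\O_v(G)_R$ over the explicit localization $R:=k[v,v^{-1}][1/\prod_{k=2}^{N}[k]_v]$. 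Since $Z_G$ is cyclic for classical $G$, we have $\widetilde{\Gamma_G}=\Gamma_G=\Out G$, and the two assertions of the theorem coincide.

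First I would observe that for $\ell>N$ the quantum integers $[k]_q$ are nonzero for $2\le k\le N$, so the evaluation $R\to k$, $v\mapsto q$, is well defined and $\O_v(G)_R$ specializes to $\O_q(G)$. In particular the vector comodule $V_q$ is irreducible of dimension $N$ and, together with its dual $V_q^*$, tensor-generates $\O_q(G)-\comod$. Arguing as in the proof of Theorem \ref{tensautoeq}(i) (using Proposition \ref{dimpres} and Steinberg's tensor product theorem \cite[Proposition 9.2]{Lu2}), any $F\in\Aut(\O_q(G)-\comod)$ must send $V_q$ to a dimension-$N$ irreducible comodule lying outside the M\"uger center $\Rep G$; the only such candidates are $V_q$ and $V_q^*$. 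After composing $F$ with the transpose-inverse outer automorphism of $SL_N$ or the diagram flip of $SO_{2N}$ if necessary, we may assume $F(V_q)\cong V_q$, and Corollary \ref{co0} then realizes $\Aut_{V_q}(\O_q(G)-\comod)$ as an affine algebraic group, obtained by specializing at $v=q$ a finitely generated $R$-algebra $H_R$ built functorially from the FRT presentation.

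The natural action of $\Out G$ on $\O_v(G)_R$ by Hopf-algebra automorphisms produces a surjection of $R$-algebras $\beta_R\colon H_R\twoheadrightarrow R[\Out G]$, and by \cite{NT1,NT2} the induced map at the generic fibre is an isomorphism, so $(\ker\beta_R)\otimes_R k(v)=0$. Running the Grothendieck generic freeness step of Theorem \ref{tensautoeq} over $R$ would conclude that $\ker\beta_R$ is $R$-projective and hence zero, giving the isomorphism $\beta_R$ at every fibre, and in particular at every $q$ of order $\ell>N$. The main obstacle, and the whole point of using FRT over the abstract argument of Proposition \ref{finpre}, is to verify that no further shrinking of $R$ is forced by generic freeness, i.e., that $H_R$ is already $R$-flat. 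I would establish this by threading an explicit PBW/cellular $R$-basis of $\O_v(G)_R$ (available in each classical type) through the construction of $H_R$ in Proposition \ref{autproalg}, realizing $H_R$ as the coordinate ring of a closed subscheme of a product of affine spaces cut out by $R$-polynomial equations whose coefficient matrices have constant rank over $\mathrm{Spec}\,R$. Granted this $R$-flatness, $\beta_R$ is an isomorphism at every fibre, specialization at $v=q$ with $\ell>N$ yields $\Aut(\O_q(G)-\comod)\cong\Out G$, and the braided statement follows by the same argument.
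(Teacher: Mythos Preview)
Your route is genuinely different from the paper's and carries a real gap.

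The paper does \emph{not} re-run the generic-freeness/deformation argument of Theorem~\ref{tensautoeq} over a smaller base ring. Instead it gives a direct, case-by-case proof that uses neither \cite{NT1,NT2} nor any flatness statement. The idea is to exploit finite domination (Proposition~\ref{fin}) concretely: for each classical $G$ the FRT presentation shows that $\O_q(G)-\comod$ is dominated by the vector comodule $V$ together with one or two explicit morphisms --- the quantum determinant $\alpha:\mathbf{1}\to V^{\otimes N}$ for $SL_N$, the bilinear form $\beta:\mathbf{1}\to V\otimes V$ for $Sp_N$, and both for $SO_N$. Hence a braided autoequivalence $F$ is pinned down once one knows $F(V)$, $F(\alpha)$, $F(\beta)$. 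One checks by hand that $F(V)$ must be $V$ (or $V^*$ in type $A$, absorbed by the outer automorphism), since it is an $N$-dimensional simple that tensor-generates; then a rescaling of the isomorphism $F(V)\cong V$ forces $F(\alpha)=\alpha$ and $F(\beta)=\pm\beta$. The residual sign in type $D$ is exactly the nontrivial outer automorphism of $SO_{2n}$. This yields $\Aut^{\rm br}\cong\Out G$ for each $\ell>N$ individually, with no appeal to the generic parameter.

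Your argument, by contrast, stands or falls on the claim that the $R$-algebra $H_R$ representing the family of autoequivalence groups is already $R$-flat, so that the generic-freeness step of Theorem~\ref{tensautoeq} requires no further shrinking of $\mathrm{Spec}\,R$. You flag this as the main obstacle, but ``threading a PBW basis of $\O_v(G)_R$ through the construction of $H_R$'' is not a proof. Recall from Propositions~\ref{autproalg} and~\ref{genX} that $H_R$ is built as the coordinate ring of a quotient $G_1/\phi(G_2)$ of one group scheme by another; flatness of such quotients over a base is delicate, and an $R$-basis of $\O_v(G)_R$ does not obviously propagate to one of $H_R$. Absent this, your argument degenerates to Theorem~\ref{tensautoeq} and you lose the explicit bound $\ell>N$, which is the entire content of the present theorem.

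One small correction: for $SO_{2n}$ the diagram involution fixes the vector representation (it swaps the half-spins), and $V\cong V^*$ already, so no composition is needed to arrange $F(V)\cong V$. In the paper's argument the outer automorphism of $D_n$ enters instead through the sign of $F(\beta)$.
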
 

\begin{proof} Let us first prove that 
$\Aut^{\rm br}(\O_q(G)-\comod)\cong \Out G$. 
Take the tensor generator $X=V$, the defining comodule. 
In all three cases we have the Faddeev-Reshetikhin-Takhtajan presentation of $\O_q(G)$, in which $\O_q(G)$ is generated by the entries of $T\in \End V\otimes \O_q(G)$ with defining relations 
$$
R_{VV}^{12}T^{13}T^{23}=T^{23}T^{13}R_{VV}^{12} 
$$
plus some additional relations depending on which case we are considering, see \cite{FRT,RTF,Ta,Ha}.  

Consider first the case $G=SL_N$. Then the additional relation is the quantum determinant relation $\det_q(T)=1$. Thus, any braided autoequivalence of $\O_q(G)-\comod$ is determined by $F(V)$ and the action of $F$ on the morphism $\alpha: k\to V^{\otimes N}$ whose image is $\wedge^N_qV$, the quantum top exterior power of $V$. The only $N$-dimensional simple $\O_q(G)$-comodules are $V,V^*$ and their Frobenius twists $V^{(1)},(V^*)^{(1)}$ (since the set of weights of a comodule is Weyl group invariant). Then $F$ maps $V$ to $V$ or to $V^*$ (as $V\oplus V^*$ tensor-generates the category, while $V^{(1)}\oplus (V^*)^{(1)}$ does not). 
So by composing $F$ with an element of $\Out G=\mathbb{Z}/2$, 
we may assume without loss of generality that $F(V)=V$. By rescaling this isomorphism it is also easy to make sure that 
$F(\alpha)=\alpha$, so $F\cong {\rm Id}$, as desired. 

Now consider $G=Sp_{N}$ ($N$ even). Then the additional relation says that $T$ preserves the morphism $\beta : k\to V\otimes V$  
which deforms the symplectic form on $V^*$. Thus any braided autoequivalence $F$ of $\O_q(G)-\comod$ is determined by $F(V)$ and $F(\beta)$. As before, the only $N$-dimensional simple comodules are $V$ and the Frobenius twist $V^{(1)}$, and $F(V)\ncong V^{(1)}$ (since $V$ 
tensor-generates the category but $V^{(1)}$ does not). Thus $F(V)=V$. By rescaling this isomorphism we can also make sure 
that $F(\alpha)=\alpha$. Thus $F\cong {\rm Id}$, as desired. 

Finally, consider the case $G=SO_N$, $N\ge 3$.  
 In this case, the additional relations are that $T$ preserves the 
morphism $\beta: k\to V\otimes V$ which deforms the inner product on $V^*$, and that it preserves 
the morphism $\alpha: k\to V^{\otimes N}$ (i.e., has quantum determinant 1). Thus, any braided autoequivalence $F$ of $\O_q(G)-\comod$ is determined by $F(V)$, $F(\alpha)$, and $F(\beta)$. Moreover, the only $N$-dimensional simple
$\O_q(G)$-comodules are $V$ and $V^{(1)}$, and $F(V)\ne V^{(1)}$, since $V$ tensor-generates the category but $V^{(1)}$ does not. So, for any braided autoequivalence of $\O_q(G)-\comod$ we have 
$F(V)=V$.  Finally, we can rescale this isomorphism 
so that $F(\alpha)=\alpha$. 
Then $F(\beta)=\pm \beta$ since $\beta\otimes \beta$ may be expressed via $\alpha^{\otimes n}$ and hence 
$F(\beta)\otimes F(\beta)=\beta\otimes \beta$.  

Now we need to consider separately odd and even $N$.
If $N$ is odd, rescaling the isomorphism $F(V)\cong V$ by $-1$ (which preserves the relation $F(\alpha)=\alpha$), we can 
make sure that $F(\beta)=\beta$, so $F\cong {\rm Id}$. 
On the other hand, if $N$ is even, then we cannot do this, so we have two cases, $F(\beta)=\beta$ and $F(\beta)=-\beta$. 
But in this case we have a nontrivial involutive outer automorphism of $G$ implemented by an element of $O_N$ with determinant $-1$. So by composing with such automorphism, we can make sure that $F(\beta)=\beta$, i.e., $F\cong {\rm Id}$, as desired. 
\end{proof} 

\begin{remark} 1. Note that for $G=SO_8$, we have the group $S_3$ acting by Dynkin diagram automorphisms (hence automorphisms of ${\rm Spin}_8$), but only a 2-element subgroup of this $S_3$ descends to $SO_8$.  

2. The proof of Theorem \ref{tensautoeq1} is similar to the arguments of \cite{KW} for $SL_N$ and \cite{TW} 
for $SO_N$ and $Sp_N$. 
\end{remark} 

\section{Tensor autoequivalences of $\Rep \mathfrak{u}_q(\g)$} \label{tau}

Now let us classify tensor autoequivalences of $\Rep \mathfrak{u}_q(\g)$. We again assume  
that $q$ is a root of unity of odd order coprime to $3$ if $G$ is of type $G_2$, and 
coprime to the determinant of the Cartan matrix of $G$.

\subsection{The connected component of the identity of $\Aut(\Rep\mathfrak{u}_q({\mathfrak{g}}))$.}

Recall that for tensor categories  $\C,\,\C'$ and  a tensor functor $F:\C\to\C'$ 
one can define the deformation cohomology $H^i_F(\C)$, see \cite{da, Y} and \cite[Section 7.22]{EGNO}. Namely,
$C^i_F(\C)= \End(F\circ \otimes^i)$ with the usual differential, and $H^i_F(\C)$ is the $i$-th cohomology of the 
complex $C^i_F(\C)$. Then $H^2_F(\C)$ consists
of equivalence classes of first order deformations of $F$ as a tensor functor. Note that if $\C=\Rep(H)$, where $H$
is a Hopf algebra over $k$ and $F:\C\to \Vec$ is the forgetful functor, then $C^i_F(\C)=H^{\otimes i}$,
so if $H$ is finite dimensional then $H^i_F(\C)=H^i(H^*,\, k)$. In particular, for $H={\mathfrak{u}}_q(\g)$ we get 

\begin{proposition}\label{H2} One has 
\[
H^2_F(\Rep{\mathfrak{u}}_q(\g))= \n_+\oplus \n_-.
\]
where $\n_\pm$ are the positive and negative nilpotent subalgebras of $\g$.  
\end{proposition}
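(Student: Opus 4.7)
The plan is to apply the general formula given in the preamble: for $\C = \Rep(H)$ with $H$ a finite-dimensional Hopf algebra and $F:\C\to \Vec$ the forgetful functor, one has $C^i_F(\C) = H^{\otimes i}$, and therefore
\[
H^2_F(\Rep \mathfrak{u}_q(\g)) \;=\; H^2(\mathfrak{u}_q(\g)^*,\,k),
\]
where the right-hand side is the Hochschild cohomology of the finite-dimensional algebra $\mathfrak{u}_q(\g)^*$ with trivial bimodule coefficients. Equivalently, this is the second cohomology of the cobar complex of the coalgebra $\mathfrak{u}_q(\g)$ with trivial coefficients: a class is represented by a Hopf $2$-cocycle $\omega\in \mathfrak{u}_q(\g)\otimes \mathfrak{u}_q(\g)$, i.e.\ an element satisfying
\[
1\otimes \omega \;-\; (\Delta\otimes\id)(\omega) \;+\; (\id\otimes\Delta)(\omega) \;-\; \omega\otimes 1 \;=\; 0,
\]
modulo coboundaries $d\phi = \phi\otimes 1 - \Delta(\phi) + 1\otimes \phi$ for $\phi\in \mathfrak{u}_q(\g)$ with $\eps(\phi)=0$. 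This reduces the proposition to a concrete linear-algebra computation on the finite-dimensional Hopf algebra $\mathfrak{u}_q(\g)$.

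To carry out the computation I would exploit the weight grading. The adjoint action of the Cartan $\mathfrak{u}_q^0\subset \mathfrak{u}_q(\g)$ makes each tensor power $\mathfrak{u}_q(\g)^{\otimes n}$ into a $(\mathbb{Z}/\ell)^r$-graded space; the cobar differential preserves this grading, and hence so does $H^2$. The zero-weight component is controlled by $\mathfrak{u}_q^0 = k[(\mathbb{Z}/\ell)^r]$, whose dual is commutative and semisimple, so its cobar cohomology vanishes in positive degrees and contributes nothing. For nonzero weights I would use the triangular decomposition $\mathfrak{u}_q(\g) = \mathfrak{u}_q^-\cdot \mathfrak{u}_q^0\cdot \mathfrak{u}_q^+$ together with the explicit coproduct formulas on the Chevalley generators to analyze the cocycle equation: for each positive root $\alpha$ the PBW root vector $e_\alpha$ should yield a one-dimensional space of cocycle classes of weight $\alpha$ (represented by an explicit normalized element involving $e_\alpha$ and a Cartan group-like), and dually for each negative root. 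Summing over all roots gives exactly $\n_+\oplus \n_-$.

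The main obstacle will be the explicit verification, weight by weight, that the space of $2$-cocycles modulo coboundaries is one-dimensional in each weight $\pm\alpha$ with $\alpha\in \Phi^+$, and zero in all other nonzero weights. To handle this systematically I would set up a Hochschild--Serre-type spectral sequence for the inclusion of the quantum Borel $\mathfrak{u}_q^{\geq 0}\hookrightarrow \mathfrak{u}_q(\g)$ and its negative counterpart, reducing to cohomology computations on each Borel which contribute $\n_+$ and $\n_-$ separately. An alternative is to filter $\mathfrak{u}_q(\g)$ by root height and argue inductively. A sanity check is supplied by Proposition \ref{inclus}: the tangent space at the identity to the orbit of the forgetful fiber functor under $G^{\rm ad}\subset \Aut^{\rm br}(\Rep \mathfrak{u}_q(\g))$ equals $\Lie(G)/\Lie(T) \cong \n_+\oplus \n_-$, matching the claimed dimension of $H^2_F$.
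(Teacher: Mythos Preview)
Your first step---identifying $H^2_F(\Rep\mathfrak{u}_q(\g))$ with $H^2(\mathfrak{u}_q(\g)^*,k)$ via the cobar complex on $\mathfrak{u}_q(\g)^{\otimes\bullet}$---is exactly what the paper does. The difference is in the second step: the paper does not carry out any computation at all, but simply cites \cite[Proposition~2.3.1]{GK} (Ginzburg--Kumar) for the identification $H^2(\mathfrak{u}_q(\g)^*,k)\cong\n_+\oplus\n_-$. What you are sketching (weight decomposition, vanishing on the Cartan part, one class per root, a Hochschild--Serre argument through the Borel) is essentially a blueprint for reproving the Ginzburg--Kumar result in degree~$2$; your outline is reasonable, but the part you flag as ``the main obstacle'' is precisely the substance of that reference, and your proposal as written stops short of actually doing it. A minor terminological point: what you call ``Hochschild cohomology with trivial bimodule coefficients'' is really $\Ext^*_{\mathfrak{u}_q(\g)^*}(k,k)$ for the augmentation module $k$, not Hochschild cohomology of the algebra; your cobar description is the correct one.
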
 

\begin{proof} As explained above, we have 
\[
H^2_F(\Rep{\mathfrak{u}}_q(\g))=H^2({\mathfrak{u}}_q(\g)^*,\,k).
\]
On the other hand, by \cite[Proposition 2.3.1]{GK} and remark thereafter, we have 
\[
H^2({\mathfrak{u}}_q(\g)^*,\,k)= \n_+\oplus \n_-.
\]
This implies the statement.
\end{proof}  

Now we can compute the identity component of $\Aut(\Rep{\mathfrak{u}}_q(\g))$. 

\begin{proposition}\label{conncomid} 
\label{Pic0=G} One has $\Aut(\Rep{\mathfrak{u}}_q(\g))_0=\Aut^{\rm br}(\Rep{\mathfrak{u}}_q(\g))_0=G^{\rm ad}$. 
\end{proposition}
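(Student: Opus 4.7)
The plan combines the already-available embedding with a matching Lie algebra dimension count. First, Proposition \ref{inclus} gives $G^{\rm ad}\hookrightarrow \Aut^{\rm br}(\Rep{\mathfrak{u}}_q(\g))\hookrightarrow \Aut(\Rep{\mathfrak{u}}_q(\g))$, and Proposition \ref{autproalg} ensures both target groups are affine algebraic; since $G^{\rm ad}$ is connected, it lies in both identity components. It therefore suffices to show $\dim\Lie\Aut(\Rep{\mathfrak{u}}_q(\g))\leq \dim\g$.

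Second, I would analyze the Lie algebra via the forgetful fiber functor $F\colon\Rep{\mathfrak{u}}_q(\g)\to\Vec$ and the tangent map of the composition $\Phi\mapsto F\circ\Phi$:
\[
\Lie\Aut(\Rep{\mathfrak{u}}_q(\g))\longrightarrow H^2_F(\Rep{\mathfrak{u}}_q(\g))=\n_+\oplus\n_-,
\]
where the target is identified by Proposition \ref{H2}. The kernel consists of infinitesimal tensor autoequivalences $\Phi$ for which $F\circ\Phi$ is infinitesimally trivial as a deformation of $F$; by Tannakian reconstruction these are exactly the Lie algebra of outer Hopf algebra automorphisms of $\mathfrak{u}_q(\g)$, i.e., Hopf derivations of $\mathfrak{u}_q(\g)$ modulo inner ones.

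Third, I would show by a direct inspection of the Chevalley-type presentation (using $\Delta(K_i)=K_i\otimes K_i$, $\Delta(e_i)=e_i\otimes 1+K_i\otimes e_i$, the Serre relations, and $[e_i,f_j]=\delta_{ij}(K_i-K_i^{-1})/(q-q^{-1})$) that every Hopf derivation of $\mathfrak{u}_q(\g)$ is the scaling action of an element $h\in\h$ acting as $\alpha_i(h)$ on $e_i$, as $-\alpha_i(h)$ on $f_i$, and as zero on $K_i$. Thus the kernel has dimension at most $\dim\h$, and
\[
\dim\Lie\Aut(\Rep{\mathfrak{u}}_q(\g))\leq \dim\h+\dim\n_++\dim\n_-=\dim\g,
\]
which matches the lower bound already supplied by $G^{\rm ad}$ and forces equality.

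The main obstacle is the Tannakian identification of the kernel with outer Hopf derivations, since one must carefully match the gauge freedom for tensor autoequivalences on the category side with inner modifications by units on the Hopf algebra side. A cleaner route, if Tannakian manipulations get cumbersome, is to apply Corollary \ref{iso1} to $H=\mathfrak{u}_q(\g)^*$, noting that $Z_H$ is trivial by Lemma \ref{noquot}, to present $\Aut(\Rep{\mathfrak{u}}_q(\g))$ as $\TAut(\mathfrak{u}_q(\g)^*)/\mathfrak{u}_q(\g)^\times$ and compute the Lie algebra of the numerator directly in terms of pairs $(\delta,j)$ of a derivation and an infinitesimal 2-cocycle subject to algebra compatibility, recovering the same bound.
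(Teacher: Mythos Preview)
Your proposal is correct and follows essentially the same route as the paper: the lower bound from the embedding $G^{\rm ad}\hookrightarrow\Aut^{\rm br}(\Rep\mathfrak{u}_q(\g))$ combined with the upper bound obtained from the map $\Lie\Aut(\Rep\mathfrak{u}_q(\g))\to H^2_F(\Rep\mathfrak{u}_q(\g))=\n_+\oplus\n_-$, whose kernel is identified with Hopf derivations of $\mathfrak{u}_q(\g)$, i.e., with the Cartan $\h$. Your care about ``outer'' versus ``ordinary'' Hopf derivations is harmless here since $\mathfrak{u}_q(\g)$ has no nonzero primitive elements, so inner Hopf derivations vanish and the distinction disappears; the paper simply says ``ordinary Hopf algebra derivations'' without further comment.
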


\begin{proof}
We have a natural map 
\begin{equation}
\label{Lie to H2}
\Lie \Aut(\Rep{\mathfrak{u}}_q(\g)) \to H^2_F(\Rep {\mathfrak{u}}_q(\g)).
\end{equation}
Namely, recall that elements of $\Aut(\Rep{\mathfrak{u}}_q(\g))$ are twisted automorphisms
$(a,\,J)$ of ${\mathfrak{u}}_q(\g)$ (in the sense of \cite{Da}),  so the 
map \eqref{Lie to H2} attaches to a twisted derivation $(d,\,j)$ the class of infinitesimal twist $j$.
(Here by a twisted derivation we mean a first order deformation of the identity twisted automorphism, see, e.g., \cite{Da3}).  Thus, by Proposition \ref{H2} we have a map
\[
\phi: \Lie\Aut(\Rep{\mathfrak{u}}_q(\g))\to \n_+\oplus \n_-. 
\]
The kernel of $\phi$ consists of  ordinary Hopf algebra
derivations of ${\mathfrak{u}}_q(\g)$, and it is easy to see that they can be identified with the Cartan subalgebra ${\mathfrak{t}}=\Lie(T)$ 
of $\g$. Thus, $\dim  \Lie\Aut(\Rep{\mathfrak{u}}_q(\g))\le \dim{\mathfrak t}+\dim \n_++\dim \n_-=\dim \g$. 
Since by Proposition \ref{inclus} we have an embedding 
$$
G^{\rm ad}\hookrightarrow \Aut^{\rm br}(\Rep{\mathfrak{u}}_q(\g))\subset \Aut(\Rep{\mathfrak{u}}_q(\g)),
$$
this implies the required statement. 
\end{proof}

\subsection{Tensor autoequivalences of $\Rep{\mathfrak{u}}_q(\g)$ are braided} 

\begin{proposition}
\label{not equivalent to reverse}
As braided tensor categories, $\Rep\mathfrak{u}_q(\mathfrak{g}) \ncong \Rep\mathfrak{u}_q(\mathfrak{g})^{\rm rev}$.
\end{proposition}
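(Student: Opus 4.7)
The plan is to proceed by contradiction: assume a braided tensor equivalence $F : \C \to \C^{\rm rev}$ exists, where $\C := \Rep\mathfrak{u}_q(\g)$, and then deduce a contradiction by lifting $F$ to a braiding-reversing autoequivalence of $\O_q(G)-\comod$. Such an autoequivalence is ruled out by Proposition~\ref{noanti}.

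The key step is to normalize $F$ so that it intertwines the canonical $G^{\rm ad}$-action on $\C$ from Proposition~\ref{inclus} with the same action on $\C^{\rm rev}$. By Proposition~\ref{conncomid}, this $G^{\rm ad}$ is precisely the identity component of the proalgebraic group $\Aut^{\rm br}(\C)$, and the analogous identification holds for $\Aut^{\rm br}(\C^{\rm rev})$ since a braided autoequivalence of $\C$ is the same datum as one of $\C^{\rm rev}$. Conjugation by $F$ produces an isomorphism $\Aut^{\rm br}(\C) \cong \Aut^{\rm br}(\C^{\rm rev})$ of proalgebraic groups, which must preserve identity components, hence restricts to an algebraic automorphism of $G^{\rm ad}$. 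Because $\Aut(G^{\rm ad}) = G^{\rm ad} \rtimes \Gamma$ sits inside $\Aut^{\rm br}(\C)$, I may compose $F$ with a suitable element of this subgroup to trivialize the induced automorphism of $G^{\rm ad}$, yielding a $G^{\rm ad}$-equivariant braided equivalence.

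Since $Z_G$ acts trivially on $\C$, this $G^{\rm ad}$-equivariance extends uniquely to $G$-equivariance with trivial $Z_G$-action data. Taking $G$-equivariantizations and invoking the identification \eqref{OqG = equivariantization}, $F$ descends to a braided equivalence $\bar F : \O_q(G)-\comod \to (\O_q(G)-\comod)^{\rm rev}$, i.e., a braiding-reversing autoequivalence of $\O_q(G)-\comod$. This contradicts Proposition~\ref{noanti}, completing the argument.

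The hardest part is the normalization step. One must verify that conjugation by $F$ truly induces an \emph{algebraic} automorphism of the proalgebraic group $G^{\rm ad}$ (so that it lies in $\Aut(G^{\rm ad}) = G^{\rm ad} \rtimes \Gamma$ and is realized inside $\Aut^{\rm br}(\C)$), and that the subsequent modification of $F$ produces equivariance data strict enough to descend cleanly to the equivariantization rather than holding only up to isomorphism. This amounts to routine but careful 2-categorical bookkeeping with $G$-actions on tensor categories.
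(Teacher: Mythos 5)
Your approach is the same as the paper's: conjugation by $F$ induces an algebraic automorphism of $G^{\rm ad}=\Aut^{\rm br}(\Rep\mathfrak{u}_q(\g))_0$ which can be trivialized by composing with an element of $\Gamma\ltimes G^{\rm ad}$, and one then passes to $G$-equivariantizations via \eqref{OqG = equivariantization} to contradict Proposition~\ref{noanti}. However, the step you defer as ``routine but careful 2-categorical bookkeeping'' is not generic bookkeeping; it is exactly where the paper invokes Lemma~\ref{noaut}. Having arranged that $F$ commutes with each $T_g$ \emph{up to some tensor isomorphism}, you still need a coherent family $F\circ T_g\cong T_g\circ F$ satisfying the cocycle compatibility in order to descend $F$ to the equivariantization; in general such data carry an obstruction valued in $\Aut_{\otimes}(\id)$-cocycles. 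The paper kills this because $\Aut_{\otimes}(\id_{\Rep\mathfrak{u}_q(\g)})$ is trivial (Lemma~\ref{noaut}, which in turn rests on $\mathfrak{u}_q(\g)$ having no central grouplikes), so the isomorphisms $F\circ T_g\cong T_g\circ F$ are unique and hence automatically coherent. That is a specific rigidity property of $\mathfrak{u}_q(\g)$, not something that holds for an arbitrary finite tensor category, so this ingredient must be cited explicitly rather than waved off.
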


\begin{proof} Assume that $\overline{F}: \Rep\mathfrak{u}_q(\mathfrak{g}) \cong \Rep\mathfrak{u}_q(\mathfrak{g})^\rev$ is a braided equivalence.
Then $\overline{F}$ induces an automorphism $\gamma$ of $\Aut(\Rep{\mathfrak{u}}_q(\g))$ and in particular of its connected component 
of the identity $\Aut(\Rep{\mathfrak{u}}_q(\g))_0$, which by Proposition~\ref{conncomid} is $G^{\rm ad}$. Since $\Aut(G^{\rm ad})=\Gamma\ltimes G^{\rm ad}$ and every element of $\Gamma\ltimes G^{\rm ad}$ is implemented by a tensor autoequivalence in 
$\Aut(\Rep{\mathfrak{u}}_q(\g))$, by composing with such an autoequivalence, we may assume without loss of generality 
that $\gamma=1$. Then $\overline{F}$ commutes with $G^{\rm ad}$. Moreover, by Lemma \ref{noaut}, this commutativity 
is an isomorphism of actions. 
Hence, by Subsection \ref{prelim:roots of 1}, $\overline{F}$ gives rise to a braided equivalence of $G$-equivariantizations 
$F: \O_q(G)-\comod\to (\O_q(G)-\comod)^{\rm rev}$. But this contradicts Proposition \ref{noanti}.  
\end{proof} 

Now we finally obtain 

\begin{theorem} Every tensor autoequivalence of $\Rep \mathfrak{u}_q(\g)$ is automatically braided. 
In other words, we have $\Aut(\Rep{\mathfrak{u}}_q(\g))=\Aut^{\rm br}(\Rep{\mathfrak{u}}_q(\g))=\Pic(\Rep{\mathfrak{u}}_q(\g))$.
\end{theorem}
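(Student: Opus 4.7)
The plan is to apply Corollary~\ref{automatically braided} with $\C = \Rep \mathfrak{u}_q(\g)$. This requires verifying four hypotheses on $\C$: factorizability, non-equivalence with its reverse, non-pointedness, and absence of proper non-trivial tensor subcategories. Three of these are essentially already in the paper.

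Specifically, factorizability of $\C$ was recalled in Subsection~\ref{prelim:roots of 1} from the factorizability of $\mathfrak{u}_q(\g)$ as a quasitriangular Hopf algebra. The non-equivalence $\C \ncong \C^{\rm rev}$ is exactly Proposition~\ref{not equivalent to reverse}, proved just above. Non-pointedness is immediate because under our hypotheses on $\ell$ the simple object $L_\lambda^q$ has dimension equal to $\dim L_\lambda>1$ for any non-minuscule dominant weight $\lambda$ in the fundamental alcove, so $\C$ has non-invertible simples.

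The remaining condition — that $\C$ has no proper non-trivial tensor subcategories — is the one place where further argument is needed, and I would deduce it from Lemma~\ref{noquot}. The standard dictionary says that for a finite-dimensional Hopf algebra $H$, Serre tensor subcategories of $\Rep H$ containing the unit correspond bijectively to Hopf quotients $H\twoheadrightarrow K$ via the embeddings $\Rep K\hookrightarrow \Rep H$. Since $\mathfrak{u}_q(\g)$ admits no non-trivial Hopf quotients by Lemma~\ref{noquot}, the only such subcategories of $\C$ are $\Vec$ and $\C$ itself.

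Once all four hypotheses are in hand, Corollary~\ref{automatically braided} yields that every tensor autoequivalence of $\C$ is braided, hence $\Aut(\C)=\Aut^{\rm br}(\C)$; the additional equality $\Aut^{\rm br}(\C)=\Pic(\C)$ is just the factorizability isomorphism \eqref{Pic=Autbr}. The main technical obstacles have already been absorbed by Proposition~\ref{not equivalent to reverse} (which rested on the computation of the identity component via deformation cohomology in Proposition~\ref{conncomid}) and by Lemma~\ref{noquot}; the theorem itself is then a clean assembly of these pieces.
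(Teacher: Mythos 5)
Your proof is correct and follows exactly the route the paper takes: the paper's proof simply cites Lemma~\ref{noquot} (to rule out proper non-trivial tensor subcategories), Proposition~\ref{not equivalent to reverse}, and then applies Corollary~\ref{automatically braided}. You have merely made explicit the remaining hypotheses (factorizability of $\mathfrak{u}_q(\g)$, non-pointedness, and the standard bijection between tensor subcategories and Hopf quotients) which the paper leaves for the reader to check.
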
 

\begin{proof} By Lemma \ref{noquot}, $\Rep{\mathfrak{u}}_q(\g)$
has no nontrivial tensor subcategories. By Proposition \ref{not equivalent to reverse}, 
the category $\C=\Rep{\mathfrak{u}}_q(\g)$ satisfies the assumptions of 
Corollary~\ref{automatically braided}. Thus, Corollary~\ref{automatically braided} implies the result. 
\end{proof} 

\subsection{Classification of tensor autoequivalences of $\Rep{\mathfrak{u}}_q(\g)$}
Introduce the notation 
$\mathbf{P}:=\Aut^{\rm br}(\Rep{\mathfrak{u}}_q(\g)).$ 
We have seen that $\mathbf{P}$ contains $\Gamma\ltimes G^{\rm ad}$, and by Proposition \ref{Pic0=G}
we have $\mathbf{P}_0=G^{\rm ad}$. Hence, $G^{\rm ad}$ is normal in $\mathbf{P}$. Given $x\in \mathbf{P}$, let $x'$ be the element of $\Gamma\ltimes G^{\rm ad}=\Aut(G^{\rm ad})$ induced by $x$. We can view $x'$ as an element of $\mathbf{P}$. Then $x=x'x''$, where $x''$ belongs to the centralizer $Z$ of $G^{\rm ad}$ in $\mathbf{P}$ (a finite group). Since $\Gamma$ normalizes $G^{\rm ad}$, it acts on $Z$ by conjugation. Thus, we have 

\begin{lemma}\label{structofP} 
$\mathbf{P}=\Gamma\ltimes (G^{\rm ad}\times Z)$.
\end{lemma}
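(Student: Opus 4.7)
The plan is to identify a natural short exact sequence
\[
1 \to Z \to \mathbf{P} \xrightarrow{\rho} \Aut(G^{\rm ad}) \to 1
\]
and show that it is split in a way compatible with the product structure claimed. The homomorphism $\rho$ is well-defined because $G^{\rm ad}=\mathbf{P}_0$ is normal in $\mathbf{P}$, and $\rho(x)$ is just the conjugation action of $x$ on $G^{\rm ad}$; by definition the kernel of $\rho$ is $Z$. Since $G$ is simple (adjoint), one has $\Aut(G^{\rm ad})=\Gamma\ltimes G^{\rm ad}$, which is how the text identifies the target.

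Next I would exhibit a splitting. The given inclusion $\iota:\Gamma\ltimes G^{\rm ad}\hookrightarrow\mathbf{P}$ composes with $\rho$ to give the identity: an element $g\in G^{\rm ad}$ acts on $G^{\rm ad}$ via conjugation, which is precisely the inner automorphism it represents in $\Aut(G^{\rm ad})$ (recall $Z(G^{\rm ad})=1$), while elements of $\Gamma$ act by the Dynkin diagram automorphisms they were set up to realize in Proposition \ref{inclus}. Hence $\rho\circ\iota=\id$, so $\rho$ is surjective and the sequence splits. The decomposition $x=x'x''$ with $x'\in\iota(\Gamma\ltimes G^{\rm ad})$ and $x''\in Z$ noted in the text is exactly this splitting, and yields
\[
\mathbf{P}=Z\rtimes(\Gamma\ltimes G^{\rm ad}).
\]

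Finally, I would repackage this as $\Gamma\ltimes(G^{\rm ad}\times Z)$. Because $Z$ is the centralizer of $G^{\rm ad}$ in $\mathbf{P}$, the groups $G^{\rm ad}$ and $Z$ commute elementwise, so $G^{\rm ad}\cdot Z=G^{\rm ad}\times Z$ as abstract groups. This product is stable under conjugation by $\Gamma$ (since $\Gamma$ normalizes $G^{\rm ad}$ by construction, and $\Gamma$ normalizes $Z$ because conjugation by $\Gamma$ preserves the kernel of $\rho$), by $G^{\rm ad}$ (by the commuting property), and trivially by $Z$, so $G^{\rm ad}\times Z$ is normal in $\mathbf{P}$. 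A quick check shows $\Gamma\cap(G^{\rm ad}\times Z)=\{1\}$: if $\gamma\in\Gamma$ lay in $G^{\rm ad}\times Z$, then its image in $\Aut(G^{\rm ad})/G^{\rm ad}=\Gamma$ would be both $\gamma$ and $1$. Together with the decomposition $\mathbf{P}=\Gamma\cdot(G^{\rm ad}\times Z)$ derived in the previous step, this gives $\mathbf{P}=\Gamma\ltimes(G^{\rm ad}\times Z)$.

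The step I expect to require the most care is the compatibility check for the splitting, namely that the $\Gamma$-action used to form the subgroup $\Gamma\ltimes G^{\rm ad}\subset\mathbf{P}$ coincides under $\rho$ with the canonical action of $\Gamma$ on $G^{\rm ad}$ by diagram automorphisms; everything else is bookkeeping with normal subgroups and commuting factors. This compatibility is essentially built into the construction of Proposition \ref{inclus} together with Lemma \ref{noaut} (which ensures there is no ambiguity from automorphisms of $\id$), so the obstacle is conceptual clarity rather than a genuine computation.
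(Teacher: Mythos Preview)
Your proposal is correct and follows essentially the same route as the paper. The paper's argument is the paragraph immediately preceding the lemma: it takes $x\in\mathbf{P}$, lets $x'\in\Gamma\ltimes G^{\rm ad}=\Aut(G^{\rm ad})$ be the automorphism induced by conjugation, views $x'$ back inside $\mathbf{P}$ via the known inclusion, and observes that $x''=(x')^{-1}x$ centralizes $G^{\rm ad}$; your split short exact sequence $1\to Z\to\mathbf{P}\xrightarrow{\rho}\Aut(G^{\rm ad})\to 1$ with section $\iota$ is exactly this decomposition written more formally, and your repackaging step (using $Z(G^{\rm ad})=1$ and that $G^{\rm ad}$ acts trivially on $Z$) makes explicit what the paper leaves to the reader.
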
  

We can now formulate one of the main results of this paper. 

\begin{theorem}\label{tensauto} 
One has $\mathbf{P}=\Gamma\ltimes G^{\rm ad}$ in the following cases: 
\begin{enumerate}
\item[(i)] If $\g$ is of a classical type ($\mathfrak{sl}_N,\,\mathfrak{so}_N$, or $\mathfrak{sp}_N$) and  the order of $q$ is bigger than $N$; 
\item[(ii)] If $\g$ is exceptional and the order of $q$ is sufficiently large. 
\end{enumerate}
\end{theorem}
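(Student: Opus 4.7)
The strategy is to reduce the computation of $\mathbf{P}$ to the classification of braided autoequivalences of $\mathcal{O}_q(G)-\comod$ already obtained in Section~\ref{tao}. By Lemma~\ref{structofP}, $\mathbf{P}=\Gamma\ltimes(G^{\rm ad}\times Z)$, where $Z$ is the centralizer of $G^{\rm ad}$ in $\mathbf{P}$, so the desired equality $\mathbf{P}=\Gamma\ltimes G^{\rm ad}$ is equivalent to the statement $Z=1$. The goal is therefore to show that any braided tensor autoequivalence $x$ of $\Rep\mathfrak{u}_q(\g)$ commuting with $G^{\rm ad}$ is necessarily trivial.

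Fix such an $x\in Z$. Since $x$ centralizes $G^{\rm ad}$ and the center $Z_G$ acts trivially on $\Rep\mathfrak{u}_q(\g)$ (Subsection~\ref{prelim:roots of 1}), the autoequivalence $x$ commutes with the full $G$-action. Using the equivariantization correspondence of Subsection~\ref{prelim:roots of 1}, one can equip $x$ with a $G$-equivariant structure and descend it to a braided tensor autoequivalence $\tilde x$ of $(\Rep\mathfrak{u}_q(\g))^G=\mathcal{O}_q(G)-\comod$. The assignment $x\mapsto\tilde x$ is a well-defined homomorphism $Z\to\Aut^{\rm br}(\mathcal{O}_q(G)-\comod)$, and it is injective because the ambiguity in choosing and comparing $G$-equivariant structures on a fixed autoequivalence $F$ is classified by $H^1\bigl(G,\Aut_\otimes(F)\bigr)$; transporting along $F^{-1}$ identifies $\Aut_\otimes(F)$ with $\Aut_\otimes(\mathrm{Id})$, which is trivial by Lemma~\ref{noaut}.

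The key geometric input is that $\tilde x$ acts as the identity on the Tannakian subcategory $\Rep G\subset\mathcal{O}_q(G)-\comod$. Indeed, objects of $\Rep G$ correspond under the equivariantization to $G$-equivariant structures on direct sums of the unit $\mathbf{1}\in\Rep\mathfrak{u}_q(\g)$, and since $x(\mathbf{1})\cong\mathbf{1}$ canonically and $x$ intertwines the $G$-action, these equivariant objects are preserved. On the other hand, Theorem~\ref{tensautoeq1} (for case (i), when $\mathfrak{g}$ is classical and $\ell>N$) and Theorem~\ref{tensautoeq}(ii) (for case (ii), when $\mathfrak{g}$ is exceptional and $\ell$ is sufficiently large) identify $\Aut^{\rm br}(\mathcal{O}_q(G)-\comod)$ with $\Gamma_G$, and any non-trivial Dynkin diagram automorphism permutes highest weights and hence acts non-trivially on $\Rep G$. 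Combining these two facts forces $\tilde x=1$, and the injectivity established in the previous paragraph then gives $x=1$, so $Z=1$.

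The main obstacle I anticipate is the careful bookkeeping around the equivariantization correspondence at the level of automorphism groups: one needs to verify both that $x\mapsto\tilde x$ is group-theoretically well-defined (independently of auxiliary choices) and that it is injective, and one needs to check the compatibility of the restriction $\tilde x|_{\Rep G}$ with the descent. Each of these steps, however, reduces cleanly to the vanishing of $\Aut_\otimes(\mathrm{Id}_{\Rep\mathfrak{u}_q(\g)})$ supplied by Lemma~\ref{noaut}, after which the remaining input is the already-proved classification theorems of Section~\ref{tao}.
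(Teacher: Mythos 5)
Your proof is correct and follows essentially the same route as the paper: reduce to $Z=1$ via Lemma~\ref{structofP}, lift the $Z\times G$-action to a genuine action on $\Rep\mathfrak{u}_q(\g)$ using Lemma~\ref{noaut}, descend to a homomorphism $Z\to\Aut^{\rm br}(\O_q(G)-\comod)\cong\Gamma_G$ via equivariantization, and invoke Theorems~\ref{tensautoeq1} and~\ref{tensautoeq}(ii). Your concluding step (showing $\tilde z|_{\Rep G}\cong\mathrm{Id}$ and noting that nontrivial elements of $\Gamma_G$ act nontrivially on $\Rep G$) is a mild rephrasing of the paper's observation that a nontrivial element of $\Gamma_G$ cannot centralize $G^{\rm ad}$, and the rest matches.
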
 

\begin{proof} By Lemma \ref{structofP}, 
our job is to show $Z=1$. There is a group homomorphism from $Z\times G$ to $\Aut^{\rm br}(\Rep {\mathfrak{u}}_q(\g))$. 
By Lemma \ref{noaut}, this  homomorphism admits a unique lift to an action on $\Rep {\mathfrak{u}}_q(\g)$. 
Since the action of $Z$  on $\Rep{\mathfrak{u}}_q(\g)$ commutes with that of $G$, we conclude that $Z$  
acts on the equivariantization $(\Rep{\mathfrak{u}}_q(\g))^G$, 
which is the braided category $\O_q(G)-\comod$. 

Thus, it suffices to prove that the group of braided autoequivalences of $\O_q(G)-\comod$ coincides with $\Gamma_G$. 
Indeed, then given $z\in Z$, 
this would yield that $z\in \Gamma_G$, hence $z=1$ (as it acts trivially on $G^{\rm ad}$). 

Now part (i) follows from Theorem \ref{tensautoeq1} and part (ii) follows from Theorem \ref{tensautoeq}(ii). 
\end{proof} 

\begin{remark} We expect that Theorem \ref{tensauto} holds without the assumptions on the order of $q$. 
\end{remark} 

\subsection{Brauer-Picard groups} 

Let $\C$ be a finite tensor category.
Recall \cite{ENO} that the {\em Brauer-Picard group}  $\BrPic(\C)$  of $\C$ 
is the group of equivalence classes of invertible $\C$-bimodule categories. Recall also that there is a canonical isomorphism 
\begin{equation}\label{canoiso} 
\BrPic(\C)\cong \Aut^{\rm br}({\mathcal Z}(\C)),
\end{equation} 
see \cite{DN}  (and \cite{ENO} in the semisimple case). 

When $\C$ is braided, its Picard group  $\Pic(\C)$  is naturally identified with a subgroup of $\BrPic(\C)$.
 
\begin{proposition} 
Let $\C$ be a finite tensor category.
\begin{enumerate}
\item[(i)] The group $\BrPic(\C)$ has a natural structure of an affine algebraic group over $k$.
\item[(ii)] If $\C$ is braided then $\Pic(\C)$  has a natural structure of an affine algebraic group over~$k$.
\end{enumerate}
\end{proposition}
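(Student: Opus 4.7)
The plan is to reduce both statements to Proposition \ref{autproalg} via the canonical isomorphism \eqref{canoiso}, namely $\BrPic(\C) \cong \Aut^{\rm br}(\Z(\C))$. The key observation is that when $\C$ is a finite tensor category, its Drinfeld center $\Z(\C)$ is again a finite (braided) tensor category, so the results of Section \ref{tetc} apply directly to $\Z(\C)$.

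For part (i), I would apply Proposition \ref{autproalg} to $\Z(\C)$, which yields that $\Aut^{\rm br}(\Z(\C))$ carries a natural structure of affine algebraic group over $k$. Transporting this structure along \eqref{canoiso} then equips $\BrPic(\C)$ with the desired structure.

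For part (ii), when $\C$ is braided there is a canonical braided embedding $\C \hookrightarrow \Z(\C)$, given by $X \mapsto (X, c_{-,X})$, and the paper already noted that $\Pic(\C)$ is naturally identified with a subgroup of $\BrPic(\C)$. Under the isomorphism \eqref{canoiso}, this subgroup corresponds (by \cite{DN}) to those braided autoequivalences of $\Z(\C)$ that preserve the essential image of $\C$ as a tensor subcategory. Since $\C$ has finitely many simple objects, "preserving $\C$" amounts to a finite collection of conditions, each of the form that a given simple object of $\C \subset \Z(\C)$ is sent to an object of $\C$. In the parametrization of $\Aut^{\rm br}(\Z(\C))$ from the proof of Proposition \ref{autproalg} (an algebra automorphism of $A = (\End P)^{\rm op}$ together with a bimodule isomorphism $J$ of the tensor product bimodule), these conditions are cut out by polynomial equations, hence define a Zariski-closed subgroup. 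Therefore $\Pic(\C)$ is a closed subgroup of $\Aut^{\rm br}(\Z(\C))$ and inherits the structure of affine algebraic group.

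The main step requiring care is the explicit identification in (ii) of $\Pic(\C)$ as the preservation subgroup inside $\Aut^{\rm br}(\Z(\C))$, for which one invokes \cite{DN}. Once that identification is in place, closedness follows from the explicit linear-algebraic parametrization of tensor autoequivalences established in Proposition \ref{autproalg}, and neither step introduces genuine new difficulty beyond what is already present in Section \ref{tetc}.
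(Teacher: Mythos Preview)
Your argument for part (i) is exactly the paper's: apply Proposition \ref{autproalg} to $\Z(\C)$ and transport along \eqref{canoiso}.

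For part (ii), however, you have misidentified the subgroup. Under the isomorphism $\BrPic(\C)\cong \Aut^{\rm br}(\Z(\C))$, the subgroup $\Pic(\C)$ does \emph{not} correspond to the braided autoequivalences that merely preserve the image of $\C$; it corresponds to those $\alpha$ that are \emph{trivializable} on $\C$, i.e., $\alpha|_\C \cong {\rm Id}_\C$ as tensor functors (this is the content of the relevant result in \cite{DN}, and is what the paper invokes). These two conditions differ in general: an autoequivalence of $\Z(\C)$ can send $\C$ to itself while inducing a nontrivial tensor autoequivalence of $\C$, and such an element need not lie in $\Pic(\C)$. So your closed-condition argument, as written, cuts out the wrong subgroup.

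Once you replace ``preserves $\C$'' by ``$\alpha|_\C \cong {\rm Id}_\C$'', the rest of your strategy is fine and matches the paper's: this trivializability condition is again expressible in the linear-algebraic parametrization of Proposition \ref{autproalg} (it says that the pair $(\alpha,J)$, restricted to the data coming from $\C\subset \Z(\C)$, lies in the image of the coboundary map $\phi$ for $\C$), hence Zariski closed, and $\Pic(\C)$ is then a closed subgroup of $\BrPic(\C)$.
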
  
\begin{proof}
Part (i)  follows immediately from \eqref{canoiso} and Proposition \ref{autproalg}.  To prove part (ii),
recall  that  under isomorphism \eqref{canoiso} $\Pic(\C)$ is identified with the subgroup of classes
of autoequivalences trivializable on the subcategory $\C \subset \Z(\C)$ (i.e., those $\alpha \in \Aut^{\rm br}(\Z(\C))$
for which  $\alpha|_\C \cong  {\rm Id}_\C$ as tensor functors), see \cite{DN}. 
This  means that $\Pic(\C)$ is a Zariski closed subgroup of $\BrPic(\C)$. 
\end{proof} 
 
In this subsection we will compute the Brauer-Picard groups of $\Rep \mathfrak{u}_q(\g)$ and 
$\Rep \mathfrak{u}_q(\mathfrak{b})$, where ${\mathfrak{b}}\subset \mathfrak{\g}$ is a Borel subalgebra. 

Let $E:=P/\ell P=Q/\ell Q$. Note that $E$ has a natural quadratic form $\mathbf{q}(v):=q^{(v,v)}$. Let $O(E,\mathbf{q})$ be the orthogonal group of this quadratic form. Note that $\Gamma$ acts naturally on $E$ preserving $\mathbf{q}$ and therefore  $\Gamma\hookrightarrow O(E,\mathbf{q})$. 

\begin{proposition}\label{BP} Under the assumptions of Theorem \ref{tensauto} one has: 
\begin{enumerate}
\item[(i)] $\BrPic(\Rep \mathfrak{u}_q(\mathfrak{b}))\cong (\Gamma\ltimes G^{\rm ad})\times O(E,\mathbf{q})$;
\item[(ii)] $\BrPic(\Rep \mathfrak{u}_q(\g))\cong (\Gamma\ltimes G^{\rm ad})\times (\Gamma\ltimes G^{\rm ad})$. 
\end{enumerate}
\end{proposition}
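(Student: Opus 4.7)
The plan is to use the canonical isomorphism \eqref{canoiso}, $\BrPic(\C)\cong \Aut^{\rm br}(\mathcal{Z}(\C))$, and to reduce both parts to analyzing braided autoequivalences of a Deligne product via the argument of Lemma~\ref{two and only two}.

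First, I would identify the Drinfeld centers. For (ii), factorizability of $\mathfrak{u}_q(\g)$ yields the canonical braided equivalence
\[
\mathcal{Z}(\Rep\mathfrak{u}_q(\g))\cong \Rep\mathfrak{u}_q(\g) \boxtimes \Rep\mathfrak{u}_q(\g)^{\rm rev}.
\]
For (i), I would invoke the standard identification
\[
\mathcal{Z}(\Rep\mathfrak{u}_q(\mathfrak{b})) \cong \Rep\mathfrak{u}_q(\g) \boxtimes \Vec_E^{\mathbf{q}},
\]
coming from an explicit quasitriangular decomposition $D(\mathfrak{u}_q(\mathfrak{b}))\cong \mathfrak{u}_q(\g)\otimes k[E]$ of the Drinfeld double: the antidiagonal copy of $k[E]$ in the doubled Cartan splits off with braiding $\mathbf{q}(v)=q^{(v,v)}$, while the quotient reconstructs $\mathfrak{u}_q(\g)$ with its standard $R$-matrix.

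Next, writing $\C=\Rep\mathfrak{u}_q(\g)$ and letting $\mathcal{B}$ denote the second tensor factor in each case, I would show that every $F\in\Aut^{\rm br}(\C\boxtimes\mathcal{B})$ decomposes as $F=F_1\boxtimes F_2$. Set $\widetilde{\C}:=F(\C\boxtimes\Vec)$; it is a braided tensor subcategory equivalent to $\C$, hence by Lemma~\ref{noquot} has no proper nontrivial tensor subcategories and is non-pointed. The subquotient trick of Lemma~\ref{two and only two} shows that $\widetilde{\C}\cap(\C\boxtimes\Vec)$ is either $\C\boxtimes\Vec$ or $\Vec$; in the latter case every simple $X\boxtimes Y\in\widetilde{\C}$ must have $X$ invertible. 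For (i), since $\mathcal{B}=\Vec_E^{\mathbf{q}}$ is pointed, this would force $\widetilde{\C}$ itself to be pointed, contradicting $\widetilde{\C}\simeq\C$. For (ii), the symmetric argument applied to $\widetilde{\C}\cap(\Vec\boxtimes\C^{\rm rev})$, together with the absence of proper tensor subcategories in $\C^{\rm rev}$ and a Frobenius--Perron dimension count, forces $\widetilde{\C}=\Vec\boxtimes\C^{\rm rev}$, which yields a braided equivalence $\C\simeq\C^{\rm rev}$, contradicting Proposition~\ref{not equivalent to reverse}. Thus in both cases $\widetilde{\C}=\C\boxtimes\Vec$, and its M\"uger centralizer $\Vec\boxtimes\mathcal{B}$ is therefore preserved by $F$, giving the desired decomposition.

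Finally, Theorem~\ref{tensauto} gives $\Aut^{\rm br}(\C)=\Aut^{\rm br}(\C^{\rm rev})=\Gamma\ltimes G^{\rm ad}$, immediately yielding (ii), and combined with the standard identification $\Aut^{\rm br}(\Vec_E^{\mathbf{q}})=O(E,\mathbf{q})$ yields (i). The main obstacle is the identification of $\mathcal{Z}(\Rep\mathfrak{u}_q(\mathfrak{b}))$ with the claimed Deligne product: the quasitriangular decomposition of $D(\mathfrak{u}_q(\mathfrak{b}))$ and the explicit computation that the $k[E]$-factor carries precisely the braiding $\mathbf{q}(v)=q^{(v,v)}$ is where the bulk of the nontrivial work lies; the subsequent factorization of $\Aut^{\rm br}$ is then routine given Lemma~\ref{two and only two} and Proposition~\ref{not equivalent to reverse}.
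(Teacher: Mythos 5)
Your proof is correct and follows essentially the same route as the paper: identify $\mathcal{Z}(\Rep\mathfrak{u}_q(\mathfrak{b}))\cong\Rep\mathfrak{u}_q(\g)\boxtimes\Rep E$ and $\mathcal{Z}(\Rep\mathfrak{u}_q(\g))\cong\C\boxtimes\C^{\rm rev}$, show that any braided autoequivalence must preserve each Deligne factor, and conclude via \eqref{canoiso} and Theorem~\ref{tensauto}. The only cosmetic difference is that for the factor-preservation step the paper argues more directly (the $\Rep E$ factor is the span of all invertible objects, and for (ii) the only proper nontrivial tensor subcategories of $\C\boxtimes\C^{\rm rev}$ are the two factors, which are distinguished by Proposition~\ref{not equivalent to reverse}), whereas you run the subquotient trick of Lemma~\ref{two and only two} on $F(\C\boxtimes\Vec)$; the FP-dimension count you mention is not needed once you observe $\C^{\rm rev}$ has no proper nontrivial tensor subcategories.
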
  

\begin{proof} (i) It is well known that the quantum double $D(\mathfrak{u}_q(\mathfrak{b}))$ is given by 
 $$D(\mathfrak{u}_q(\mathfrak{b}))=\mathfrak{u}_q(\g)\otimes k[E],$$ where the R-matrix is the external product of the R-matrix of $\mathfrak{u}_q(\g)$ with the R-matrix on $k[E]$ defined by $\mathbf{q}$. Hence, 
${\mathcal Z}(\Rep\mathfrak{u}_q(\mathfrak{b}))$ is equivalent as a braided category to the category $\Rep \mathfrak{u}_q(\g)\boxtimes \Rep E$, where the braiding on the second factor is defined by $\mathbf{q}$. 
Thus, by \eqref{canoiso}, we have 
$$
\BrPic(\Rep \mathfrak{u}_q(\mathfrak{b}))\cong \Aut^{\rm br}(\Rep \mathfrak{u}_q(\g)\boxtimes \Rep E).
$$
Now, any braided autoequivalence $F$ of $\Rep \mathfrak{u}_q(\g)\boxtimes \Rep E$ must preserve the second factor, since it is the subcategory spanned by all the invertible objects. Hence $F$ also preserves the first factor (as it is the centralizer of the second one). Thus, we get 
$$
\BrPic(\Rep \mathfrak{u}_q(\mathfrak{b}))\cong \Aut^{\rm br}(\Rep \mathfrak{u}_q(\g))\times \Aut^{\rm br}(\Rep E).
$$
Since $\Aut^{\rm br}(\Rep E)\cong O(E,\mathbf{q})$, the result follows from Theorem \ref{tensauto}. 

(ii) Since the category $\C:=\Rep\mathfrak{u}_q(\g)$ is factorizable, one has
${\mathcal Z}(\C)\cong \C\boxtimes \C^{\rm rev}$. Thus, by \eqref{canoiso}, we have 
$$
\BrPic(\Rep \mathfrak{u}_q(\mathfrak{g}))\cong \Aut^{\rm br}(\C\boxtimes \C^{\rm rev}).
$$
It follows from Lemma \ref{noquot} that the only nontrivial tensor subcategories of 
$\C\boxtimes \C^{\rm rev}$ are $\C$ and $\C^{\rm rev}$, which are not braided equivalent
by Proposition \ref{not equivalent to reverse}. Hence, any braided autoequivalence of $\C\boxtimes \C^{\rm rev}$ must preserve both factors. 
So we get 
$$
\BrPic(\Rep \mathfrak{u}_q(\mathfrak{g}))\cong \Aut^{\rm br}(\C)\times \Aut^{\rm br}(\C),
$$
and the result follows from Theorem \ref{tensauto}.
\end{proof} 

Let $B\subset G$ be the Borel subgroup corresponding to ${\mathfrak{b}}\subset \g$, and $B^{\rm ad}$ be the image of $B$ in $G^{\rm ad}$.  

\begin{corollary} 
\label{AutRepuqb}
One has $\Aut(\Rep \mathfrak{u}_q(\mathfrak{b}))\cong \Gamma\ltimes B^{\rm ad}$. 
\end{corollary}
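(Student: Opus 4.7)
The plan is to combine the computation of the Brauer--Picard group from Proposition~\ref{BP}(i) with a characterization of the image of $\Aut(\C)$ in $\BrPic(\C)$, where $\C := \Rep \mathfrak{u}_q(\mathfrak{b})$. First, I would establish the inclusion $\Gamma \ltimes B^{\rm ad} \hookrightarrow \Aut(\C)$: since $B$ normalizes $\mathfrak{b}$, it acts on $\mathfrak{u}_q(\mathfrak{b})$ by Hopf algebra automorphisms via the restriction of the adjoint action of $G$ on $\mathfrak{u}_q(\g)$, with kernel $Z_G$, producing $B^{\rm ad} \hookrightarrow \Aut(\C)$; the Dynkin group $\Gamma$ preserves $\mathfrak{b}$ and acts by Hopf automorphisms, giving the semidirect factor.

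For the reverse inclusion, I would use the embedding $\Aut(\C) \hookrightarrow \BrPic(\C) \cong (\Gamma \ltimes G^{\rm ad}) \times O(E,\mathbf{q})$ from Proposition~\ref{BP}(i) together with the identification $\Z(\C) \cong \Rep(\mathfrak{u}_q(\g) \otimes k[E])$ coming from $D(\mathfrak{u}_q(\mathfrak{b})) = \mathfrak{u}_q(\g) \otimes k[E]$. For $F \in \Aut(\C)$, the corresponding element $\Z(F) = (\alpha, \beta)$ is characterized by the intertwining relation $F \circ F_0 \cong F_0 \circ \Z(F)$, where $F_0 : \Z(\C) \to \C$ is the forgetful functor induced by the Hopf inclusion $\mathfrak{u}_q(\mathfrak{b}) \hookrightarrow D(\mathfrak{u}_q(\mathfrak{b}))$. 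This forces $\Z(F)$ to be induced by a (twisted) Hopf automorphism of $D(\mathfrak{u}_q(\mathfrak{b}))$ preserving the embedded Hopf subalgebra $\mathfrak{u}_q(\mathfrak{b})$.

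Analyzing this preservation on generators: the skew-primitives $E_i \in \mathfrak{u}_q(\mathfrak{b})$ sit inside $\mathfrak{u}_q(\g) \subset D(\mathfrak{u}_q(\mathfrak{b}))$, so preservation requires $\alpha$ to preserve $\mathfrak{u}_q(\mathfrak{b}) \subset \mathfrak{u}_q(\g)$ as a subalgebra, forcing $\alpha \in \Gamma \ltimes B^{\rm ad}$. The Cartan grouplikes $K_i$ embed via a specific combination of the Cartan of $\mathfrak{u}_q(\g)$ and the generators of $k[E]$, and the preservation condition rigidly determines $\beta$ as the restriction to $E$ of the Dynkin component of $\alpha$. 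Consequently the image of $\Aut(\C)$ in $\BrPic(\C)$ is the graph of this assignment, isomorphic to $\Gamma \ltimes B^{\rm ad}$, matching the lower bound and completing the proof.

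The main obstacle is making precise the characterization of the image $\Aut(\C) \hookrightarrow \Aut^{\rm br}(\Z(\C))$ as the subgroup of braided autoequivalences that descend along $F_0$, and then carrying out the explicit compatibility on the embedded Hopf subalgebra's generators, particularly on the Cartan grouplikes, to extract the constraint tying $\beta$ rigidly to the Dynkin component of $\alpha$.
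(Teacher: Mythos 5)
Your proposal shares the key starting points with the paper — the inclusion $\Gamma\ltimes B^{\rm ad}\hookrightarrow\Aut(\C)$ and the computation of $\BrPic(\C)$ from Proposition~\ref{BP}(i) — but the route you take to pin down $\Aut(\C)$ inside $\BrPic(\C)$ has a genuine gap, and the paper avoids it altogether.

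The gap is in the step ``analyzing this preservation on generators.'' You want to read off a constraint on $(\alpha,\beta)\in(\Gamma\ltimes G^{\rm ad})\times O(E,\mathbf{q})$ by asking that a corresponding Hopf-algebra map preserve $\mathfrak{u}_q(\mathfrak{b})\subset D(\mathfrak{u}_q(\mathfrak{b}))$ and act in a controlled way on $E_i$, $K_i$. But a braided autoequivalence of $\Rep\mathfrak{u}_q(\g)$ is in general a \emph{twisted} automorphism, $(\alpha,J)$ with a Hopf $2$-cocycle $J$; already the action of $G^{\rm ad}$ (Subsection \ref{prelim:roots of 1}) involves such twists except on the torus. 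So ``$\alpha$ preserves $\mathfrak{u}_q(\mathfrak{b})$ as a subalgebra, hence $\alpha\in\Gamma\ltimes B^{\rm ad}$'' is not a well-posed deduction: there is no literal algebra map to test on $E_i$, and the conclusion $\alpha\in\Gamma\ltimes B^{\rm ad}$ does not follow from any statement you can formulate and check cocycle-freely on generators. You acknowledge this as ``the main obstacle,'' but it is not a matter of bookkeeping; it is the crux, and your sketch does not resolve it. You also take the injectivity of $\theta:\Aut(\C)\to\BrPic(\C)$ for granted, which is false for general $\C$ and needs an argument.

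The paper sidesteps both problems with purely categorical/cohomological tools. Injectivity of $\theta$ comes from the exact sequence $\Inv(\Z(\C))\to\Inv(\C)\to\Aut(\C)\to\BrPic(\C)$ of \cite{GP,MN}: for $\C=\Rep\mathfrak{u}_q(\mathfrak{b})$ this reads $E\to E\to\Aut(\C)\to\BrPic(\C)$ with the first map the identity, so $\theta$ is injective. Then, rather than analyzing generators, the paper computes $\Lie\Aut(\C)=\mathfrak{b}$ (via the deformation cohomology of $\mathfrak{u}_q(\mathfrak{b})$ as in Proposition \ref{conncomid}, using \cite{GK}), so $\Aut(\C)_0=B^{\rm ad}$ and $\Aut(\C)=\Gamma'\ltimes B^{\rm ad}$ with $\Gamma_{\rm diag}\subset\Gamma'\subset\Gamma\times O(E,\mathbf{q})$. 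Finally it pins down $\Gamma'=\Gamma_{\rm diag}$ by an $\Ext$ argument on $\Inv(\C)=E$: the $g_i$ attached to simple roots are distinguished as the only invertibles with $\Ext^1(\mathbf{1},g_i)\ne 0$, and adjacency in the Dynkin diagram is detected by $\Ext^2(\mathbf{1},g_ig_j)$; this shows any $F\in\Aut(\C)$ acts on $E$ through $\Gamma$, after which one reduces to $F\in G^{\rm ad}\cap\Aut(\C)=B^{\rm ad}$. If you want to pursue your plan, the honest version would be to replace the generator analysis by this $\Ext$-theoretic identification of the Dynkin structure inside $\Inv(\C)$, and to add the exact-sequence argument for injectivity.
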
 

\begin{proof}
Let $\C$ be a finite tensor category. Let $\Inv(\C)$ denote the group of isomorphism classes of invertible objects of $\C$.
There is an exact sequence 
$$ 
\Inv(\mathcal{Z}(\C))\to \Inv(\C)\to \Aut(\C)\to \BrPic(\C),
$$
see \cite{GP, MN}, where the first map is induced by the forgetful functor $\mathcal{Z}(\C)\to \C$, 
the second one sends an invertible object $\chi$ to the conjugation functor $X\mapsto \chi\otimes X\otimes \chi^{-1}$, and 
the third one is given by  $\theta(F)=\C$ with the usual left action of $\C$ and the right action of $\C$ twisted by $F$. 

Now take $\C=\Rep \mathfrak{u}_q(\mathfrak{b})$. Then the above exact sequence takes the form
$$
E\to E\to \Aut(\C)\to \BrPic(\C),
$$
where the first map is the identity. Thus, the map $\theta: \Aut(\C)\to \BrPic(\C)$ is injective. Hence, by Proposition \ref{BP}(i),
$\Aut(\C)\subset (\Gamma\ltimes G^{\rm ad})\times O(E,\mathbf{q})$. 

It is clear that $\Aut(\C)$ contains the subgroup $\Gamma_{\rm diag}\ltimes B^{\rm ad}\subset  (\Gamma\ltimes G^{\rm ad})\times O(E,\mathbf{q})$, 
where $\Gamma_{\rm diag}\subset \Gamma\times O(E,\mathbf{q})$ is the diagonal copy of $\Gamma$. 
Also, one shows similarly to the proof of Proposition \ref{conncomid} (using the results of \cite{GK}) that 
$\Lie\Aut(\C)={\mathfrak{b}}$, hence $$\Aut(\C)_0=B^{\rm ad}\subset G^{\rm ad}.$$ Thus, $\Aut(\C)$ must normalize $B^{\rm ad}$, hence
$\Aut(\C)=\Gamma'\ltimes B^{\rm ad}$, where $$\Gamma_{\rm diag}\subset \Gamma'\subset \Gamma\times O(E,\mathbf{q}).$$ 

It remains to show that $\Gamma'=\Gamma_{\rm diag}$. 
Let $F\in \Aut(\C)$, and consider the action of $F$ on the invertible objects $\Inv(\C)=E$. 
First of all, $F$ must permute the objects $g_i\in E$ corresponding to the roots $\alpha_i\in Q$, since they are the only invertible objects 
which have a nontrivial $\Ext^1$ with the unit object. Also, $\Ext^2(\bold 1,g_ig_j)=0$ if and only if $i$ is connected to $j$ in the Dynkin diagram of $\g$ (as this is exactly the case when there is no quadratic relation between $e_i$ and $e_j$). Thus, the permutation of $g_i$ induced by $F$ is implemented by an element of $\Gamma$. Hence, composing $F$ with an element of 
$\Gamma_{\rm diag}$ if needed, we may assume that $F$ acts trivially on $E$. Then $F\in G^{\rm ad}\cap \Aut(\C)=B^{\rm ad}$. 
This implies the required statement. 
\end{proof} 

\begin{remark}
Corollary~\ref{AutRepuqb} allows one to describe tensor autoequivalences of $\Rep \mathfrak{u}_q(\mathfrak{g})$ in terms of induction.
Namely,  given a tensor category $\C$, its  indecomposable exact  module categories are in bijection with  Lagrangian algebras in $\Z(\C)$.
This bijection is given by 
\begin{equation}
\label{modLagrcorr}
\M \mapsto I_\M(\be), 
\end{equation}
where $I_\M: \C^*_\M \to \Z(\C)$ is the right adjoint to the forgetful functor $\Z(\C)\cong \Z(\C^*_\M) \to  \C^*_\M$. 
As usual,  $\C^*_\M$ denotes the dual tensor category of $\C$ with respect to $\M$.  Furthermore, correspondence~\eqref{modLagrcorr}
is equivariant with respect to the isomorphism $\BrPic(\C) \cong \Aut^{\rm br}(\Z(\C))$. Here the group $\BrPic(\C)$  (respectively, 
$\Aut^{\rm br}(\Z(\C))$) acts on the set of module categories (respectively, Lagrangian algebras) in an obvious way.
The stabilizer of $\M$ in $\Aut^{\rm br}(\Z(\C))$
is the subgroup  of autoequivalences induced  from $\Aut(\C^*_\M)$ and the orbit of $\M$
consists of module categories $\N$ such that $\C^*_\M \cong \C^*_\N$ \cite{MN}. 

In our situation $\C = \Rep \mathfrak{u}_q(\mathfrak{b})$, the induction    
$\Aut(\Rep \mathfrak{u}_q(\mathfrak{b}))\to \Aut^{\rm br}(\Rep \mathfrak{u}_q(\mathfrak{g}))$ is injective,
and $\Aut^{\rm br}(\Rep \mathfrak{u}_q(\mathfrak{g}))/ \Aut(\Rep \mathfrak{u}_q(\mathfrak{b}))$ is identified
with the flag variety $G/B = G^{\rm ad}/B^{\rm ad}$. Point stabilizers are identified with images of inductions:
\[
\Aut( (\Rep \mathfrak{u}_q(\mathfrak{b}))^*_\M) \to \Aut^{\rm br}(\Rep \mathfrak{u}_q(\mathfrak{g}))
\]
taken over module categories  $\M$  such that  $(\Rep \mathfrak{u}_q(\mathfrak{b}))^*_\M \cong \Rep \mathfrak{u}_q(\mathfrak{b})$.
Since $G^{\rm ad}$ coincides with the union of its Borel subgroups and all Borel subgroups are conjugate,
we conclude that every (braided) tensor autoequivalence of $\Rep \mathfrak{u}_q(\mathfrak{g}) $ is induced from
a tensor autoequivalence of a copy of $\Rep \mathfrak{u}_q(\mathfrak{b})$ (i.e., from a central tensor functor
$\Rep \mathfrak{u}_q(\mathfrak{g}) \to \Rep \mathfrak{u}_q(\mathfrak{b})$).
\end{remark}

\begin{remark}

The induction homomorphism
$\Aut( \Rep \mathfrak{u}_q(\mathfrak{b})) \to \Aut^{\rm br}(\Rep \mathfrak{u}_q(\mathfrak{g}))$
and construction of Weyl  reflections in  $\Aut^{\rm br}(\Rep \mathfrak{u}_q(\mathfrak{g}))$
are discussed in \cite{LP}.
\end{remark}

\subsection{Twists for $\mathfrak{u}_q(\g)$}

It is an interesting problem to classify twists for $\mathfrak{u}_q(\g)$ up to gauge transformations, 
i.e., categorically speaking, to classify fiber functors $F: \Rep\mathfrak{u}_q(\g)\to \Vec$ up to isomorphism. 
By the results of \cite{EK1,EK2}, the answer to a similar question for the quantized universal enveloping algebra $U_\hbar(\g)$ 
is given in terms of Belavin-Drinfeld triples (see e.g., \cite{KKSP}). On the other hand, twists for 
$\mathfrak{u}_q(\g)$ associated to Belavin-Drinfeld triples were worked out in \cite{Ne} 
following the method of \cite{EN} and \cite{ESS}. Let us call them {\it Belavin-Drinfeld twists}, and call the corresponding fiber functors {\it Belavin-Drinfeld functors}. 

\begin{question}(see also \cite{Ne}, Question 9.5) 
Is any fiber functor on $\Rep\mathfrak{u}_q(\g)$ a composition of a Belavin-Drinfeld functor with a tensor autoequivalence of $\Rep\mathfrak{u}_q(\g)$? 
In other words, is any twist for $\mathfrak{u}_q(\g)$ gauge equivalent to a composition of a Belavin-Drinfeld twist with one coming from a twisted automorphism of 
$\mathfrak{u}_q(\g)$? 
\end{question}

The answer is positive for $\g=\mathfrak{sl}_2$ by \cite[Proposition 8.11]{Mo}. 
In this case there are no nontrivial Belavin-Drinfeld functors, so every fiber functor is the composition 
of the standard one with a tensor autoequivalence, and tensor autioequivalences form the group $SL_2(k)$. 

%%%%%%%%%%%%%%%%%%%%%%%%%%%%%%%%%%%%%%%%%%%%%%%%%%%%%%
%\bibliographystyle{ams-alpha}

\end{document}